\renewcommand{\epsilon}{\varepsilon}
\newcommand{\CC}{\mathbb C}
\newcommand{\Z}{\mathbb Z}
\newcommand{\mbb}{\mathbb}
\newcommand{\ip}[1][\cdot,\cdot]{\left\langle #1 \right\rangle}
\newcommand{\R}{\mbb R}
\newcommand{\DVS}{\mathrm{DVS}}
\newcommand{\CVS}{\mathrm{CVS}}
\DeclareMathOperator{\Sym}{Sym} \DeclareMathOperator{\Hom}{Hom}
\def\cC{\mathcal C}\def\cD{\mathcal D}
\def\cF{\mathcal F}
\def\cK{\mathcal K}
\def\cS{\mathcal S}\def\cT{\mathcal T}
\def\CC{\mathbb C}
\def\HH{\mathbb H}
\def\LL{\mathbb L}
\def\RR{\mathbb R}
\def\sC{\mathscr C}
\def\sE{\mathscr E}\def\sF{\mathscr F}\def\sG{\mathscr G}
\def\sJ{\mathscr J}\def\sL{\mathscr L}
\def\sO{\mathscr O}
\def\sV{\mathscr V}
\def\fJ(E){\mathfrak E}
\def\fJ{\mathfrak J}
\def\fU{\mathfrak U}\def\fV{\mathfrak V}
\def\fg{\mathfrak g}\def\fh{\mathfrak h}
\declaretheoremstyle[
spaceabove=7pt, spacebelow=7pt,
headfont=\normalfont\bfseries,
notefont=\mdseries, notebraces={(}{)},
bodyfont=\normalfont,
postheadspace=5pt,
headpunct = .
]{thm}
\declaretheoremstyle[
spaceabove=7pt, spacebelow=7pt,
headfont=\normalfont\bfseries,
notefont=\mdseries, notebraces={(}{)},
bodyfont=\normalfont,
postheadspace=10pt,
headpunct = .
]{def}
\declaretheoremstyle[
spaceabove=4pt, spacebelow=7pt,
headfont=\itshape,
postheadspace=5pt,
headpunct = :,
postheadspace = 3pt, qed = $\lozenge$
]{rem}
\declaretheorem[numbered = yes, parent = section, style = thm]{theorem}
\declaretheorem[numbered = no, style = thm, name = Corollary A]{corA}
\declaretheorem[sibling = theorem, style = thm, name = Theorem/Definition]{thm-def}
\declaretheorem[sibling = theorem, style = thm]{proposition}
\declaretheorem[sibling = theorem, style = thm]{lemma}
\declaretheorem[sibling = theorem, style = thm]{notation}
\numberwithin{equation}{section}
\declaretheorem[sibling = theorem, style = def]{definition}
\declaretheorem[sibling = theorem, style = rem]{remark}
\declaretheorem[sibling = theorem, style = rem]{example}
\newcommand{\cinfty}{C^{\infty}}
\newcommand\Obcl[1][~]{\ifthenelse{ \equal{#1}{~}} {
  \operatorname{Obs}^{cl}
}{
  \operatorname{Obs}^{cl}(#1)
}}
\newcommand\Obq[1][~]{\ifthenelse{ \equal{#1}{~}} {
  \operatorname{Obs}^{q}
}{
  \operatorname{Obs}^{q}(#1)
}}
\DeclareMathOperator{\id}{id}
\DeclareMathOperator{\im}{im}
\newcommand{\sEb}{\sE_\partial}
\newcommand{\Eb}{E_\partial}
\newcommand{\diff}{\ell_1}
\newcommand{\Qb}{\ell_{1,\partial}}
\newcommand{\condfields}{\sE_\sL}
\newcommand{\condfieldsterm}{$\sL$-conditioned }
\newcommand{\condfieldscs}{\sE_{\sL,c}}
\newcommand{\Oloc}{\sO_{loc}(\condfields)}
\newcommand{\Olocred}{\sO_{loc,red}(\condfields)}
\newcommand{\bdyM}{\partial M}
\newcommand{\fullOloc}{\mathrm{DR}_{M,\bdyM}(E;L)}
\newcommand{\Dens}{\mathrm{Dens}}
\newcommand{\tubnhd}{T}
\newcommand{\densM}{Dens_M}
\newcommand{\densbdyM}{Dens_{\bdyM}}
\DeclareMathOperator{\cone}{cone}
\newcommand{\delsmoothdistr}{\Omega^{n,n-1}_{M,tw}}
\def\del{\partial}
\def\hotimes{{\widehat{\otimes}}}
\def\im{{\rm im}}
\newcommand{\dmdr}{\textrm{DR}^\bullet_M(D_M)}
\newcommand{\dmdrbdy}{\textrm{DR}^\bullet_{\bdyM}(D_M)}
\newcommand{\dmdrpurebdy}{\textrm{DR}^\bullet_{\bdyM}(D_{\bdyM})}
\newcommand{\dmdrrel}{\textrm{DR}^\bullet_{M,\bdyM}(D_M)}
\newcommand{\dpush}{\iota_D}
\newcommand{\dpull}{\iota^D}
\newcommand{\drrel}{\textrm{DR}(M,\bdyM)}
\newcommand{\innerhom}[2]{\underline{CVS}\left( #1, #2\right)}
\newcommand{\innerhomsym}[3]{\underline{CVS}\left( #1, #2\right)_{S_{#3}}}
\tikzset{
    vector/.style={decorate, decoration={snake}, draw},
	provector/.style={decorate, decoration={snake,amplitude=2.5pt}, draw},
	antivector/.style={decorate, decoration={snake,amplitude=-2.5pt}, draw},
    fermion/.style={draw=black, postaction={decorate},
        decoration={markings,mark=at position .55 with {\arrow{>}}}},
    Ahalfedge/.style={draw=black, postaction={decorate},
        decoration={markings,mark=at position .25 with {\arrow{>}}, mark = at position 1 with {\arrow[draw=black,xshift=1.5pt]{Circle[length=3pt]}}}},
    Bhalfedge/.style={draw=black, postaction={decorate},
        decoration={markings,mark=at position .80 with {\arrow[draw=black]{>}}, mark = at position 0 with {\arrow[draw=black,xshift=1.5pt]{Circle[length=3pt]}}}},
 ABedge/.style={draw=black, postaction={decorate},
        decoration={markings,mark=at position .50 with {\arrow[draw=black]{chevron}}}},
ABfulledge/.style={draw=black, postaction={decorate},
        decoration={markings,mark = at position .50 with {\arrow[draw=black]{chevron}}, mark = at position .3 with {\arrow[draw=black]{>}}, mark = at position .75 with {\arrow[draw=black]{>}}, mark = at position .0 with {\arrow[draw=black,xshift=1.5pt]{Circle[length=3pt]}}, mark = at position 1 with {\arrow[draw=black,xshift=1.5pt]{Circle[length=3pt]}}}},
BBfulledge/.style={draw=black, postaction={decorate},
        decoration={markings,mark = at position .50 with {\arrow[draw=black]{newdiamond}}, mark = at position .3 with {\arrow[draw=black]{>}}, mark = at position .75 with {\arrow[draw=black]{<}}, mark = at position .0 with {\arrow[draw=black,xshift=1.5pt]{Circle[length=3pt]}}, mark = at position 1 with {\arrow[draw=black,xshift=1.5pt]{Circle[length=3pt]}}}},
 BBedge/.style={draw=black, postaction={decorate},
        decoration={markings,mark=at position .50 with {\arrow[draw=black]{newdiamond}}}},
    fermionbar/.style={draw=black, postaction={decorate},
        decoration={markings,mark=at position .55 with {\arrow{<}}}},
    fermionnoarrow/.style={draw=black},
    gluon/.style={decorate, draw=black,
        decoration={coil,amplitude=4pt, segment length=5pt}},
    scalar/.style={dashed,draw=black, postaction={decorate},
        decoration={markings,mark=at position .55 with {\arrow[draw=black]{>}}}},
    scalarbar/.style={dashed,draw=black, postaction={decorate},
        decoration={markings,mark=at position .55 with {\arrow[draw=black]{<}}}},
    scalarnoarrow/.style={dashed,draw=black},
    electron/.style={draw=black, postaction={decorate}/,
        decoration={markings,mark=at position .55 with {\arrow[draw=black]{>}}}},
	bigvector/.style={decorate, decoration={snake,amplitude=4pt}, draw},
	    ray/.style={draw=black, postaction={decorate},
        decoration={markings,mark=at position 1 with {\arrow[draw=black]{>}}, mark = at position 0 with {}}}
}
\begin{document}

\title  {Factorization Algebras for Classical Bulk-Boundary Systems}
\author {Eugene Rabinovich}
\maketitle

\begin{abstract}
    
The behavior of classical and quantum field theories in the presence of a spacetime boundary is a subject of great interest in both mathematics and physics.
Notably, a bulk-boundary system was used implicitly in Kontsevich's quantization of Poisson manifolds.
In this paper, we present a mathematical definition of a certain class of classical, perturbative bulk-boundary systems; we also associate, to each classical bulk-boundary system, a $P_0$ factorization algebra of observables (which is to say, the observables carry a Poisson bracket of cohomological degree +1).
Finally, we give a description of the local functionals of a bulk-boundary system in terms of $D$-modules on the spacetime. 
The mathematical definition we present here is based heavily on the work of Costello-Gwilliam and Butson-Yoo, though the construction of factorization algebras is new; in particular, we make heavy use of the Batalin-Vilkovisky formalism, which describes the fields of a classical field theory as a (--1)-shifted symplectic derived stack.

Our description of the shifted symplectic geometry of bulk-boundary systems is schematically described as follows.
Given a spacetime manifold $M$, one may formulate a classical field theory $\mathring \sE$ on the interior $\mathring M$, i.e. $\mathring \sE$ is a (--1)-shifted formal moduli problem associated to $\mathring M$.
If one tries to extend in a na\"ive way the space $\mathring \sE$ to one $\sE$ associated to $M$, one finds that $\sE$ does not possess a (--1)-shifted symplectic structure.
Instead, one finds a 0-shifted symplectic formal moduli problem $\sEb$ associated to the boundary $\bdyM$ and a Lagrangian map $\sE\to \sEb$.
A bulk-boundary system is defined to be such a pair $(\sE,\sEb)$ together with another Lagrangian $\sL\to \sEb$.
Then, the derived intersection $\sE\times^h_{\sEb}\sL$ will have a (--1)-shifted symplectic structure.

\end{abstract}


\pagestyle{headings}  

\tableofcontents

\section{Introduction}
The behavior of classical and quantum field theories in the presence of a spacetime boundary is a subject of great interest in both mathematics and physics.
We mention here only two examples of relevance to mathematics: the work of Fjelstad, F\"uchs, Runkel, and Schweigert \autocite{RFFS} on constructing full conformal field theories in 2 dimensions using topological quantum field theories in 3 dimensions, and the work of Kontsevich \autocite{KontPSM} on constructing a deformation quantization of the algebra of functions on a Poisson manifold using ideas from topological field theory in 2 dimensions.
In this work, we present a systematic mathematical framework for the study of perturbative classical bulk-boundary systems, with an eye towards quantization \autocite{ERthesis}.

\subsection{Summary of Results}
\subsubsection*{Factorization Algebras of Observables}
Our approach follows closely that of Costello and Gwilliam \autocite{cost, CG1, CG2}, which applies in the case that the spacetime does not have a boundary. 
In the above-cited works, the authors give a rigorous definition of a perturbative classical or quantum gauge theory, and show how to construct a factorization algebra of observables associated to any classical or quantum gauge theory.
They also show that the factorization algebra of classical observables possesses a $P_0$ structure.

In this paper, we present a mathematical definition of classical bulk-boundary systems and construct factorization algebras thereof.
In other words, we extend the results of Costello and Gwilliam for \emph{classical} (though interacting) theories to manifolds with boundary.
The corresponding extensions of the quantum results of Costello and Gwilliam have appeared in \autocite{ERthesis}, and are also found in \autocite{GRW} (for free quantum bulk-boundary systems).

\subsubsection*{Local Functionals}
Another important aspect of the formalism of Costello and Gwilliam is that these authors provide a differential graded Lie algebra of local functionals.
This Lie algebra serves two functions: it describes the deformation theory of a fixed classical BV theory, and it provides a home for order-by-order obstructions to quantization of that classical BV theory.
In practice, nearly any quantization of a field theory in the Costello-Gwilliam language uses a description of the Lie algebra of local functionals in terms of $D_M$-modules (see, for just one example of many, \autocite{bcov}).
In this paper, we also provide a detailed discussion of the complex of local functionals associated to a classical bulk-boundary system in the language of $D_M$-modules.
Unfortunately, there is no natural Lie bracket on this complex; we leave it to future work to determine whether there is a quasi-isomorphic complex that does possess a compatible Lie bracket.

\subsubsection*{Boundary Conditions for BV Theories}
We will fix throughout a manifold $M$ with boundary $\bdyM$ and we will use the letter $\iota$ to denote the inclusion $\bdyM \hookrightarrow M$. The manifold $M$ serves as the spacetime background for the theory. 
We will restrict ourselves to a certain class of field theories, namely the TNBFTs (see Definition \ref{def: tnbft}, below).  
The main ingredient in the constructions of Costello and Gwilliam at the classical level is the Batalin-Vilkovisky (BV) formalism \autocite{originalBV}, which has a geometric interpretation in terms of (--1)-shifted symplectic structures \autocite{geometryofBV}, \autocite{AKSZ}, \autocite{ptvv}.
Our aim in the present work is to maintain as much of this geometric interpretation as possible: in the end, a bulk-boundary system will also have an interpretation in terms of (--1)-shifted symplectic geometry.

In the version of the BV formalism of \autocite{cost}, \autocite{CG1}, and \autocite{CG2}, a classical perturbative BV theory on the manifold without boundary $\mathring{M}:=M\backslash \bdyM$ is specified by a $\Z$-graded vector bundle $E\to \mathring{M}$ (whose space of sections $\sE$ furnishes the space of fields for the theory) together with a $(-1)$-shifted fiberwise symplectic structure $\ip_{E}$ and a cohomological degree 0 local action functional $S$ satisfying the classical master equation $\{S,S\}=0$. 
One may easily extend $E$, $\ip_{E}$, and $S$ to $M$; 
however, the presence of the boundary $\bdyM$ makes the consideration of the classical master equation more subtle, since, in almost all cases, one uses integration by parts to verify that $\{S,S\}=0$ on $\mathring{M}$.
Put in a different way, the space of fields ceases to be (--1)-shifted symplectic once we pass from $\mathring{M}$ to $M$.  

Our solution is to impose boundary conditions on the fields (the space $\sE$) so that the boundary terms obstructing the classical master equation vanish when restricted to the fields with this boundary condition. 
We note that the imposition of boundary terms is not a novel idea.
What is notable about our approach, however, is that we impose boundary conditions in a way that is consistent with the interpretation of the BV formalism in terms of the geometry of shifted symplectic spaces.
Namely, we are careful to ensure that the imposition of the boundary conditions happens in a homotopically coherent way.
The payoff is two-fold. 
First, we find that the space of fields with the boundary condition imposed is indeed (--1)-shifted symplectic, using general facts about shifted symplectic geometry. 
Second, we guarantee that our constructions naturally take into account the gauge symmetry of the problem---there is no need to separately impose boundary conditions on the fields, ghosts, etc. 

We ask for our boundary conditions to be suitably local (see Definition \ref{def: bdycond}). 
The locality condition ensures that even after the boundary condition is imposed, the fields remain the space of global sections of a sheaf on $M$: this allows us to carry through the constructions of \autocite{CG2} almost without change, namely we can construct factorization algebras of observables by following \autocite{CG2} \emph{mutatis mutandis}.
 
\subsubsection*{Field Theories which are Topological Normal to the Boundary} 
Let us say a bit more about the class of field theories we consider. 
These are the theories which are so-called ``topological normal to the boundary'' (first defined in \autocite{butsonyoo}; see also Definition \ref{def: tnbft}). 
We will often use the acronym TNBFT to stand for ``field theory which is topological normal to the boundary.'' For a TNBFT we require that, roughly speaking, in a tubular neighborhood $\tubnhd\cong \bdyM\times [0,\epsilon)$ of the boundary $\bdyM$, the space of fields $\sE$ (the sheaf of sections of the bundle $E$ introduced above; in this introduction, we will also use the letter $\sE$ to denote the space of global sections of this sheaf) admits a decomposition
\begin{equation}
\sEb\mid_{\tubnhd}\hotimes_\beta \Omega^{\bullet}_{[0,\epsilon)},
\end{equation}
where $\sEb$ is a 0-shifted symplectic space living on the boundary $\bdyM$, and arising, like $\sE$, from a $\Z$-graded bundle $\Eb\to \bdyM$.
We require all relevant structures on $\sE$ to decompose in a natural way with respect to this product structure.
We find (cf. \ref{lem: fieldslagrangian}) that the natural map $\sE\to \sEb$ obtained by pulling back forms on $[0,\epsilon)$ to $t=0$ is a Lagrangian map.
We define a boundary condition to be a certain other type of Lagrangian in $\sEb$ (cf. Definition \ref{def: bdycond}).
A general result from derived algebraic geometry (cf. Theorem 2.9 of \autocite{ptvv}) then implies that the derived intersection $\sE\times^h_{\sEb} \sL$ carries a canonical $(-1)$-shifted symplectic structure.
Since the objects $\sE,\sEb,$ and $\sL$ are in general infinite-dimensional, the above-mentioned theorem does not apply directly.
Nevertheless, the theorem provides philosophical guidance to our constructions, and we will see that the derived intersection (in an appropriate model category) $\sE\times^h_{\sEb}\sL$ does indeed possess something like the shifted-symplectic structure present for field theories on manifolds without boundary.

A pair consisting of a TNBFT $\sE$ and a boundary condition for $\sE$ will be termed a \emph{classical bulk-boundary system}. Though this term properly applies to a much more general class of objects, we use it here to avoid bulky terminology.
The reason for restricting our attention to TNBFTs is that such theories have a prescribed simple behavior near the boundary $\bdyM$. This behavior makes it easier to verify the Weiss cosheaf condition for the factorization algebra of observables of a classical bulk-boundary system. 
This behavior also enables us to study heat kernel renormalization of bulk-boundary systems in \autocite{ERthesis}.

Important examples of TNBFTs are of course the topological theories: BF theory, Chern-Simons theory, the Poisson sigma model, and topological mechanics. 
However, even for topological theories, we may consider boundary conditions which are not topological in nature. 
For example, one may study in Chern-Simons theory the chiral Wess-Zumino-Witten boundary condition, whose definition requires the choice of a complex structure on the boundary surface.
One may also study theories which have a dependence on arbitrary geometric structures on the boundary $\bdyM$, as long as their dependence in the direction normal to $\bdyM$ is topological. Mixed BF theory, Example \ref{ex: mixedbf} is one such example; it depends on the choice of a complex structure on the boundary of a three manifold of the form $N\times \R_{\geq 0}$, where $N$ is a surface.

\subsubsection*{TNBFTs as ``Universal Bulk Theories''}
In \autocite{butsonyoo}, Butson and Yoo also defined the notion of ``degenerate field theory,'' a notion which we prefer to call ``Poisson BV theory.'' 
A Poisson BV theory is like a ``normal'' BV theory in the sense that its space of fields is described in terms of formal derived algebraic geometry; but whereas a ``normal'' BV theory has a (--1)-shifted symplectic structure, we only require that a Poisson BV theory have a shifted-Poisson structure (it is in this sense that Poisson BV theories are degenerate).
Butson and Yoo associated, to any Poisson BV theory on a manifold (without boundary) $N$, a bulk-boundary system on $N\times \RR_{\geq 0}$, which they call the ``universal bulk theory'' for the Poisson BV theory.
The universal bulk theory associated to a Poisson BV theory does satisfy the additional assumptions we impose on bulk-boundary systems.
For the sake of brevity, we do not discuss universal bulk theories at any length; however, the techniques presented here are designed to apply in particular to universal bulk theories (with their canonical boundary condition).

\subsection{Conventions}
Throughout, the spacetime manifold will be denoted $M$ and its boundary $\bdyM$. The inclusion $\bdyM\to M$ will be denoted $\iota$. If $\tubnhd$ is a tubular neighborhood of $\bdyM$ in $M$ with a specified diffeomorphism $\tubnhd\cong \bdyM\times[0,\epsilon)$, then we will use $t$ and $s$ exclusively to denote the normal coordinate in $\tubnhd$.
\begin{itemize}
\item Given a manifold $M$, $\densM$ is the bundle of densities on $M$ and $\Omega^n_{M,tw}$ is it sheaf of sections. Similarly, $\Lambda^k T^*M$ is the $k$-th exterior power of the cotangent bundle on $M$ and $\Omega^k_M$ its sheaf of sections. We also write $\cinfty_M$ to denote the sheaf of sections of the trivial line bundle on $M$. We always include a subscript on such sheaves to indicate the manifold on which they are defined (since we often deal with sheaves on $\bdyM$ as well).
\item The notation $\cinfty_{M,c}$ indicates the cosheaf of compactly-supported functions on $M$; we define $\Omega^k_{M,c}$ similarly.
\item In the sequel, whenever we use a normal-font letter (e.g. $E$) for a bundle on $M$, we use the script version of that latter (e.g. $\sE$) for the corresponding sheaf of sections. This contrasts slightly with our usage in the introduction, where we used the script letter for the space of global sections of the bundle.
\item If $V$ is a finite-rank vector space, we let $V^*$ denote its dual space. We use the same notation for the fiberwise dual of a vector bundle.
\end{itemize}

\subsection{Functional-Analytic Conventions}
Because we are interested in factorization algebras whose values are chain complexes of infinite-dimensional vector spaces, we would like to find a category of infinite-dimensional vector spaces which is abelian.
The first candidate would be the category of locally convex topological vector spaces; this category, however, is infamously not abelian.
We work instead with the category $\DVS$ of differentiable vector spaces (Definition B.2.3 of \autocite{CG1}).
$\DVS$ is an abelian category with a number of other nice properties which are detailed in Appendix B of \autocite{CG1}.
From a purely formal standpoint, $\DVS$ is a very useful category to work in; the usual techniques of homological algebra (e.g. spectral sequences) apply in $\DVS$, though they don't apply in other categories of vector spaces.
However, the objects of $\DVS$ are described as sheaves on the site of smooth manifolds, which makes direct calculations in the category a bit cumbersome.
To be able to have the best of both worlds---to use formal homological-algebraic techniques and direct computations at the same time---we also work with the auxiliary category $\CVS$ of convenient vector spaces (cf. Definition B.5.1 of \autocite{CG1}, also \autocite{krieglmichor}).
The objects of $\CVS$ are vector spaces with additional structure (a ``bornology'') and a suitable completeness property, and the morphisms of $\CVS$ are simply bounded linear maps (those preserving the bornologies).
The category $\CVS$ has a number of nice properties, but is unfortunately not abelian.
The relationship between $\CVS$ and $\DVS$ is as follows: there is a functor $\CVS\to \DVS$ which preserves limits and countable coproducts.
It emphatically does \emph{not} preserve colimits.
This means, in particular, that if one computes the cohomology of a chain complex in $\CVS$, the answer may not coincide with the cohomology of its image in $\DVS$.
We therefore take the following precautions when computing colimits in $\CVS$:
\begin{enumerate}
    \item When computing a quotient $V/W$, we first find a splitting in $\CVS$: $V=W\oplus U$. This splitting will then survive in $\DVS$, and we can identify $V/W\cong U$ in either category.
    \item When computing cohomology using explicit computations in $\CVS$, we construct a homotopy equivalence of a chain complex with its cohomology. This homotopy equivalence survives into $\DVS$, and gives a computation of the cohomology of the complex at hand in either category.
\end{enumerate}
We note that both $\CVS$ and $\DVS$ can be enriched over themselves, and we let the notation $\underline{\CVS}$ and $\underline{\DVS}$ refer to the inner hom objects in their respective categories.

For a much fuller discussion of these issues, we refer the reader to Appendix B of \autocite{CG1}, or Appendix A of \autocite{classicalarxiv}.

\subsection{Outline}

In Section \ref{sec: tnbfts}, we introduce TNBFTs, as well as the class of local boundary conditions we consider. We show that the imposition of such boundary conditions on TNBFTs is consistent with locality and gauge symmetry (Lemma \ref{lem: tildefieldsmodel}).
In Section \ref{sec: localfcnls}, we discuss a $D$-module perspective on local functionals in BV theories on manifolds with boundary.
In Section \ref{sec: FAs}, we construct the $P_0$ factorization algebra of classical observables for a bulk-boundary system.
In Section \ref{sec: classexamples}, we apply the formalism to a few simple examples.
In the Appendix, we discuss some functional analytic details necessary in the main body of the paper.

\subsection{Acknowledgements}

This work is the author's own interpretation of research conducted with Benjamin Albert. The author would like to thank Benjamin for many discussions on the subject.

The author would also like to thank Dylan Butson, Kevin Costello, Owen Gwilliam, Si Li, Peter Teichner, Brian Williams, and Philsang Yoo for the many discussions related to the work presented here.
He would also like to thank Denis Nardin for clarification on a certain point in the proof of Lemma 2.28.

This material is based upon work supported by the National Science Foundation Graduate Research Fellowship Program under Grant No. DGE 1752814.

\section{Classical Bulk-Boundary Systems}
\label{sec: tnbfts}
In this section, we introduce the basic classical field-theoretic objects with which we work, namely we introduce the notion of a classical bulk-boundary system.
We use the Batalin-Vilkovisky (BV) formalism as the natural language for quantum field theory, which encodes both the equations of motion and the symmetries of a field theory in an intrinsically homotopically invariant fashion. 
The theories we consider are called ``topological normal to the boundary'' (TNBFT), a notion which we discuss in Section \ref{sec: classTNBFTs}.  
The definition we use here is due to Butson and Yoo \autocite{butsonyoo}. For a manifold with boundary $M$, a TNBFT is a field theory on  $\mathring M$ (the interior of $M$) which has a specified behavior of the field theory near the boundary. 
As the name suggests, the solutions to the equations of motion of a TNBFT are constant along the direction normal to the boundary $\bdyM$ in some tubular neighborhood $\tubnhd\cong \bdyM\times [0,\epsilon)$ of the boundary.

In Section \ref{sec: boundcnds}, we introduce boundary conditions for TNBFTs and discuss the homotopical interpretation of the resulting bulk-boundary systems.

\subsection{Classical Field Theories on Manifolds with Boundary}

\label{sec: classTNBFTs}
In this subsection, we define a TNBFT and give examples of TNBFTs.
\subsubsection{Definitions}
Before elaborating on the definition of a TNBFT, let us first recall what is meant by a perturbative classical field theory on $\mathring M$. The definition here follows \autocite{CG2}.

\begin{definition}
\label{def: clbv}
A \textbf{perturbative classical BV theory} on $\mathring M$ consists of
\begin{enumerate}
\item a $\Z$-graded vector bundle $E\to \mathring M$,
\item sheaf maps 
\begin{equation}
\ell_k: (\sE[-1])^{\hotimes_\beta k} \to \sE[-1] 
\end{equation}
of degree $2-k$,
\item and a degree $-1$ bundle map 
\begin{equation}
\ip_{loc} : E\otimes E \to \mathrm{Dens}_{\mathring M},
\end{equation}
The objects $E,\ell_k, \ip_{loc}$ are required to satisfy the following properties:
\begin{itemize}
\item the maps $\ell_k$ turn $\sE[-1]$ into a sheaf of $L_\infty$ algebras;
\item the maps $\ell_k$ are polydifferential operators (we will call the pair $(\sE[-1],\ell_k)$ a \textbf{local $L_\infty$ algebra} on $\mathring M$;
\item the complex $(\sE,\ell_1)$ is elliptic;
\item the pairing $\ip_{loc}$  is fiberwise non-degenerate;
\item let us denote by $\ip$ the induced map of precosheaves 
\begin{equation}
\sE_c\otimes \sE_c \to \underline{\RR}
\end{equation}
given by using the fiberwise pairing $\ip_{loc}$, and then integrating the result. (Here, $\underline \RR$ is the constant pre-cosheaf assigning $\RR$ to each open subset.) We use the same notation for the pairing induced on $\sE_c[-1]$. We require that, endowed with this pairing, $\sE[-1]$ becomes a precosheaf of cyclic $L_\infty$ algebras (on $\sE_c[-1]$, the pairing has degree --3). 
\end{itemize}
\end{enumerate}
\end{definition}
We will often think of $\sE$ as a sheaf of (--1)-shifted symplectic formal moduli problems on $\mathring M$.
This is simply a way of recasting the axioms above in a more geometric language.
\begin{remark}
\label{rmk: classfcnl}
To a field theory in the above sense, we may associate the following element of 
\begin{equation}
\prod_{k\geq 1} \CVS\left(\sE_c^{\hotimes_\beta k}, \RR \right)^{S_k},
\end{equation}
(the space of functionals on $\sE$):
\begin{equation}
S(\varphi) = \sum_{k\geq 1}\frac{1}{(k+1)!} \ip[\varphi, \ell_k(\varphi,\ldots, \varphi)],
\end{equation}
which is usually denoted the \textbf{classical action functional} of the theory.
We also make the definition 
\begin{equation}
I(\varphi ) = \sum_{k\geq 2} \frac{1}{(k+1)!}\ip[\varphi, \ell_k(\varphi,\ldots, \varphi)]
\end{equation}
(i.e. $I$ remembers only the brackets of arity at least 2).
This is the \textbf{classical interaction functional} of the theory.
The cyclicity of the brackets $\ell_k$ with respect to the pairing $\ip$ guarantees that $S$ and $I$ are symmetric in their inputs.
\end{remark}

\begin{remark}
The ellipticity of the complex $(\sE, \diff)$ is not a necessary requirement from the standpoint of physics, and in fact, there is no need for this requirement in the definition of a classical BV theory. We include it here because the theory of elliptic PDE furnishes a wealth of tools which make it possible to develop a framework for quantization.
\end{remark}

We now specify the precise definition of a TNBFT. The following definition is adapted from Definitions 3.8 and 3.9 of \autocite{butsonyoo}.

\begin{definition}
\label{def: tnbft}
A \textbf{field theory  on $M$ which is topological normal to the boundary} is specified, as in Definition \ref{def: clbv}, by a $\Z$-graded bundle $E\to M$, a collection of sheaf maps $\ell_k$, and a bundle map $\ip_{loc}$. We also specify the following data:
\begin{enumerate}
\item a $\Z$-graded bundle $\Eb\to \bdyM$;
\item a collection of sheaf maps
\begin{equation}
\ell_{k,\partial} : (\sEb[-1])^{\otimes k}\to \sEb[-1];
\end{equation}
\item and a degree 0 bundle map 
\begin{equation}
\ip_{loc,\partial}: \Eb \otimes \Eb \to \densbdyM.
\end{equation}
\item  In some tubular neighborhood $\tubnhd\cong \bdyM\times [0,\epsilon)$ of $\bdyM$, an isomorphism
\begin{equation}
\phi :E\mid_{\tubnhd}\cong \Eb\boxtimes \Lambda^\bullet T^* [0,\epsilon).
\end{equation}

\end{enumerate}

We require the following to hold:
\begin{itemize}
\item The $k$-th graded summand $E^k$ of $E$ is zero for $|k|>>0$.
\item When $E$, $\ell_k$, and $\ip_{loc}$ are restricted to $\mathring M$, the resulting data satisfy the conditions to be a classical BV theory on $\mathring M$.
\item The data $(\sEb,\ell_{k,\partial}, \ip_{loc,\partial})$ satisfy all the requirements of Definition \ref{def: clbv} (with the degree $-1$ pairing $\ip_{loc}$ replaced by the degree 0 pairing $\ip_{loc,\partial}$).
\item The isomorphism $\phi$ respects all relevant structures. More precisely, we require
\begin{itemize}
\item Over $\tubnhd$, the induced isomorphism 
\begin{equation}
\varphi: \sE[-1]\mid_{\tubnhd} \cong \sEb[-1]\hotimes_\beta \Omega^\bullet_{[0,\epsilon)}
\end{equation}
is an isomorphism of (sheaves of) $L_\infty$ algebras. Here, the target of $\varphi$ has the $L_\infty$-algebra structure induced from its decomposition as a tensor product of an $L_\infty$ algebra with a commutative differential graded algebra.
\item Over $\tubnhd$, the fiberwise pairing $\ip_{loc}$ is identified with the tensor product of the pairing $\ip_{loc,\partial}$ and the wedge product pairing $\bigwedge$ on $\Lambda^\bullet T^* [0,\epsilon)$. 
\end{itemize}
\end{itemize}
\end{definition}

Before listing examples of TNBFTs, let us mention a few of their properties which follow directly from the definitions. First, however, we need to establish a definition:

\begin{definition}
Let $\iota: \bdyM \to M$ denote the inclusion. Given a TNBFT $(\sE,\sEb,\cdots)$, the \textbf{canonical submersion} is the composite sheaf map
\begin{equation}
\rho: \sE \to \iota_* \sEb 
\end{equation}
which arises as the composite of 
\begin{enumerate}
\item The restriction
\begin{equation}
\sE(U) \to \sE(U\cap \tubnhd),
\end{equation}
\item the isomorphism
\begin{equation}
\sE(U\cap \tubnhd) \cong (\sEb\hotimes_\beta \Omega^\bullet_{[0,\epsilon)})(U\cap \tubnhd),
\end{equation}
\item
and the ``pullback to $t=0$'' map
\begin{equation}
(\sEb\hotimes_\beta \Omega^\bullet_{[0,\epsilon)})(U\cap \tubnhd) \to \sEb(U\cap \bdyM).
\end{equation}
\end{enumerate}
\end{definition}

\begin{remark}
One can extract a BV-BFV theory (\autocite{CMRquantumgaugetheories}) from a classical TNBFT, using $\sE$ as the bulk space of fields and $\sEb$ as the boundary fields. The assumptions in the definition guarantee that the general procedure of symplectic reduction in \autocite{CMRquantumgaugetheories} outputs $\sEb$ as the phase space for $\sE$.
\end{remark}

\begin{remark}
Occasionally, when we wish to emphasize the interpretation of $\sE$ (respectively, $\sEb$) as formal moduli spaces (as in \autocite{CG2}), we will refer to $\sE$ (respectively, $\sEb$) as a \textbf{local, (--1)-shifted (respectively, 0-shifted) symplectic formal moduli problem}.
\end{remark}

The following facts follow in a straightforward manner from the definitions.

\begin{proposition}
\label{prop: propsofTNBFTs}
\begin{enumerate}
\item The canonical surjection $\rho$, when thought of as a map
\begin{equation}
\sE[-1] \to \iota_* \sEb[-1],
\end{equation}
is a strict map of sheaves of $L_\infty$ algebras on $M$ (i.e. it strictly intertwines the operations $\ell_k$ and $\ell_{k,\partial}$).
\item The only failure of $(\sE_c[-1],\ell_k, \ip)$ to be a precosheaf of cyclic $L_\infty$ algebras on $M$ is encoded in the equation 

\begin{equation}
\label{eq: boundaryterm}
\ip[\ell_1 e_1, e_2]+(-1)^{|e_1|} \ip[e_1,\ell_1 e_2] = \ip[\rho e_1, \rho e_2]_{\partial},
\end{equation} 
where $e_1$ and $e_2$ are compactly-supported sections of $E\to M$ (the support of such sections may intersect $\bdyM$). 
\end{enumerate}
\end{proposition}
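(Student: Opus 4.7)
The plan is to prove the two parts separately.

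For (1), I would factor the canonical surjection $\rho$ as a composition of three strict morphisms of sheaves of $L_\infty$ algebras and conclude by composition. The first is the restriction $\sE \to \sE|_{\tubnhd}$, which is strict by construction. The second is the TNBFT identification $\varphi$, which is strict by the hypotheses of Definition~\ref{def: tnbft}. The third is the map $\id_{\sEb[-1]} \otimes \mathrm{ev}_0 \colon \sEb[-1] \hotimes_\beta \Omega^\bullet_{[0,\epsilon)} \to \sEb[-1]$, where $\mathrm{ev}_0 \colon \Omega^\bullet_{[0,\epsilon)} \to \RR$ is evaluation at $t=0$. Since $\mathrm{ev}_0$ is a strict CDGA morphism, and tensoring an $L_\infty$ algebra with a CDGA map preserves strictness in the induced $L_\infty$ structures, this last map is a strict $L_\infty$ morphism.

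For (2), I would first handle the higher brackets $\ell_k$ for $k \geq 2$. Using a partition of unity subordinate to $\{\mathring M, \tubnhd\}$, the cyclicity check reduces to the two open sets. On $\mathring M$, cyclicity is part of the hypothesis that the interior data form a classical BV theory. On $\tubnhd$, the TNBFT decomposition identifies $\ell_k$ with $\ell_{k,\partial} \otimes \wedge$ and $\ip_{loc}$ with $\ip_{loc,\partial} \otimes \wedge$; since neither factor involves $\partial_t$, the cyclicity check fibers pointwise over $[0,\epsilon)$, reducing it to the cyclicity of $(\sEb_c[-1], \ell_{k,\partial}, \ip_\partial)$ on the closed manifold $\bdyM$, which is assumed.

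For the $\ell_1$ cyclicity defect I would carry out the same reduction to $\tubnhd$ and compute directly. Writing each compactly supported section as $e_i = a_i(t) + b_i(t)\,dt$ with $a_i, b_i$ smooth $t$-families of sections of $\Eb$, the differential becomes $\ell_1(a_i + b_i\,dt) = \ell_{1,\partial}a_i + \bigl(\partial_t a_i + \ell_{1,\partial} b_i\bigr)\,dt$ up to Koszul signs. The $\ell_{1,\partial}$ contributions to the cyclicity defect cancel by the argument used for the higher brackets. The $\partial_t$ contributions collect into $\int_{\bdyM}\int_0^\epsilon \partial_t \bigl(\ip_{loc,\partial}[a_1(t), a_2(t)]\bigr)\,dt$, which by the fundamental theorem of calculus (and vanishing at $t=\epsilon$ by compact support for small enough tubular neighborhood) equals $-\int_{\bdyM} \ip_{loc,\partial}[a_1(0), a_2(0)] = \pm\ip[\rho e_1, \rho e_2]_\partial$. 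The main obstacle is the careful tracking of Koszul signs arising from the degree $-1$ pairing, the form degree, and the $L_\infty$/CDGA tensor-product conventions, so that the sign on the right-hand side of (\ref{eq: boundaryterm}) is reproduced exactly.
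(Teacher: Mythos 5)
Your proposal is correct and is exactly the intended argument: the paper offers no proof of this proposition beyond the remark that the facts ``follow in a straightforward manner from the definitions,'' and your factorization of $\rho$ into three strict $L_\infty$ morphisms together with the reduction of the cyclicity defect to the tubular neighborhood (where only the $1\otimes d_t$ part of $\ell_1$ contributes, via the fundamental theorem of calculus) is the natural unpacking of that remark. The only points deserving one more line each are why the cross terms $D(\chi_0 e_1,\chi_1 e_2)$ in your partition-of-unity reduction vanish (the primitive produced by integration by parts is supported in the intersection of supports, hence compactly in $\mathring M$) and that the vanishing at $t=\epsilon$ comes from the cutoff $\chi_1 e_i$ having compact support inside $\tubnhd$ rather than from shrinking the tubular neighborhood.
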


\begin{remark}
Given a field theory in the sense of Costello and Gwilliam (and Definition \ref{def: clbv}), one may write down a local functional $S$ on the space of fields, and the higher Jacobi identities for the $L_\infty$ algebra imply that $S$ solves the classical master equation $\{S,S\}=0$.
We avoid that interpretation here, since there are subtleties in defining the Poisson bracket $\{\cdot,\cdot\}$ when~$\bdyM\neq \emptyset$.
\end{remark}

\subsubsection{Examples}
Let us move on to discuss some examples of TNBFTs.

\begin{example}
\label{ex: toplmech}
Let $M=[a,b]$, and $V$ a symplectic vector space.  For $\sE$ we take $\Omega^\bullet_M\otimes V$ with the de Rham differential and the ``wedge-and-integrate'' pairing. It is straightforward to verify that $\sE$ is a TNBFT, with $\sEb= V\oplus V$. This theory is called \textbf{topological mechanics}.
\end{example}

\begin{example}
\label{ex: bf}
Let $M$ be an oriented $n$-manifold with boundary $\bdyM$; let $\fg$ be a finite-rank Lie algebra. \textbf{(Topological) BF theory on $M$} has space of fields
\begin{equation}
\sE = \left(\Omega^\bullet_{M}\otimes \fg[1]\right)\oplus \left(\Omega^\bullet_{M}\otimes \fg^\vee[n-2]\right). 
\end{equation}
The $L_\infty$ structure on $\sE[-1]$ is the natural such structure obtained from considering $\sE[-1]$ as the tensor product of the differential graded commutative algebra $\Omega^\bullet_M$ with the graded Lie algebra $\fg\oplus \fg^\vee[n-3]$.
The pairing $\ip_{loc}$ is given by wedge product of forms, followed by projection onto the top form degree.

The boundary data are given similarly by
\begin{equation}
\sEb = \left(\Omega^\bullet_{\bdyM}\otimes \fg[1]\right)\oplus \left(\Omega^\bullet_{\bdyM} \otimes \fg^\vee[n-2]\right)
\end{equation}
with analogous $L_\infty$ structure and pairing $\ip_{loc, \partial}$.

In general, if $\fg$ is an $L_\infty$ algebra, the same definitions can be made. Furthermore, if $M$ is not orientable, one can make the same definitions by replacing the $\fg^\vee$-valued forms with $\fg^\vee$-valued \emph{twisted} forms.
\end{example}

\begin{example}
\label{ex: mixedbf}
Let $\Sigma$ be a Riemann surface and $\fg$ a Lie algebra. \textbf{Mixed BF theory} is a theory on $\Sigma \times \R_{\geq 0}$ whose space of fields is 
\begin{equation}
\sE = \Omega^{0,\bullet}_{\Sigma}\hotimes_\beta \Omega^\bullet_{\R_{\geq 0}}\otimes \fg[1]\oplus\Omega^{1,\bullet}_{\Sigma}\hotimes_\beta \Omega^\bullet_{\R_{\geq 0}}\otimes \fg^\vee;
\end{equation}
the differential on the space of fields is $\bar \partial+ d_{dR, t}$ (where $t$ denotes the coordinate on $\R_{\geq 0}$); the only non-zero bracket of arity greater than 1 is the two-bracket, which arises from the wedge product of forms and the Lie algebra structure of $\fg$ (and its action on $\fg^\vee$).
One defines $\ip_{loc}$ in an analogous way to the corresponding object in topological BF theory.

The boundary data are given by the space of boundary fields
\begin{equation}
\sEb = \Omega^{0,\bullet}_\Sigma \otimes \fg[1]\oplus \Omega^{1,\bullet}_\Sigma \otimes \fg,
\end{equation}
with similar definitions for the  brackets and pairing.

Mixed BF theory may be similarly formulated on any manifold of the form $X\times N$, where $X$ is a complex manifold and $N$ is a smooth manifold with boundary.
\end{example}

\begin{example}
Let $M$ be an oriented 3-manifold with boundary, and $\fg$ a Lie algebra with a symmetric, non-degenerate pairing $\kappa$. \textbf{Chern-Simons theory} on $M$ is given by space of fields
\begin{equation}
\sE = \Omega^\bullet_M\otimes \fg[1];
\end{equation}
$\sE[-1]$ has the $L_\infty$ structure induced from considering it as the tensor product of the commutative differential graded algebra $\Omega^\bullet_M$ with the Lie algebra $\fg$. The pairing $\ip_{loc}$ uses the wedge product of forms and the invariant pairing $\kappa$. 

The boundary data are encoded in the boundary fields
\begin{equation}
\sEb = \Omega^\bullet_\bdyM \otimes \fg[1],
\end{equation}
with brackets and pairing $\ip_{loc, \partial}$ defined similarly to the analogous structures on the bulk fields $\sE$.

We note that, on manifolds of the form $\Sigma \times \R_{\geq 0}$, Chern-Simons theory is a deformation of mixed BF theory (see \autocite{ACMV} or \autocite{GW} for details). 
\end{example}

\begin{example}
Let $N$ be a manifold without boundary, and let $(\sG, \ell_1, \ell_2)$ be an elliptic differential graded Lie algebra on $N$ (i.e. $\sG$ is a local differential graded Lie algebra such that the complex $(\sG,\ell_1)$ is elliptic).
$\sG$-BF theory is the theory on $N\times \RR_{\geq 0}$ with sheaf of fields
\begin{equation}
\sG\hotimes_\beta \Omega^\bullet_{\RR_{\geq 0}}[1]\oplus \sG^!\hotimes_\beta \Omega^\bullet_{\RR_\geq 0}[-1];
\end{equation}
the brackets and pairing on this sheaf of fields are defined by analogy with the preceding examples.
\end{example}

\begin{remark}
Many of the theories we consider are fully topological in nature (topological BF theory, Chern-Simons theory, the Poisson sigma model, and topological mechanics are all of this form). However, we will see that even for topological theories, we can choose a non-topological boundary condition. This is notably true for Chern-Simons theory.
\end{remark}

\begin{remark}
As noted in the introduction, one of the main sources of examples of TNBFTs is the class of so-called ``degenerate'' BV theories (\autocite{butsonyoo}). 
Every degenerate theory $\cT$ on a manifold $N$ gives rise to a (``non-degenerate'') TNBFT on $N\times \RR_{\geq 0}$, which Butson and Yoo call the ``universal bulk theory for $\cT$''. 
We refer the reader to \autocite{butsonyoo}, Definitions 2.34 and 3.18 for the definitions of these notions, since we do not use them explicitly at any length. 
We note only that BF theory, mixed BF theory, the Poisson sigma model, Chern-Simons theory, and $\sG$-BF theory (on spaces of the form $N\times \R_{\geq 0}$) arise in this way. 
\end{remark}

\subsection{Boundary conditions and homotopy pullbacks}
\label{sec: boundcnds}
In this section, we study boundary conditions for TNBFTs.
Our main goal is to provide a derived geometric interpretation for the imposition of boundary conditions, as discussed in the introduction.
\subsubsection{Boundary conditions}
Equation \eqref{eq: boundaryterm} tells us that the space of fields $\sE$ of a TNBFT on $M$ is \emph{not} $(-1)$-shifted symplectic. However, we do have a map $\rho: \sE(M)\to \sEb(\bdyM)$ and $\sEb(\bdyM)$ has a $0$-shifted symplectic structure. We will construct a Lagrangian structure on the map $\rho$, using Equation \eqref{eq: boundaryterm}. If we are given another Lagrangian $\sL(M)\to \sEb(M)$, then by Example 3.2 of \autocite{calaquedag}, we can expect that the homotopy fiber product 
\[
\sE(M)\times^h_{\sEb(\bdyM)}\sL(M)
\]
have a $(-1)$-shifted symplectic structure. We will call this second Lagrangian $\sL(M)$ a \emph{boundary condition}. The purpose of this section is to carry out this general philosophy more precisely, and in a sheaf-theoretic manner on $M$.

\begin{definition}
\label{def: loclagrangianstr}
Let $\sF$ be the sheaf of sections of a bundle $F\to M$ and suppose that $\sF[-1]$ is a local $L_\infty$-algebra on $M$ with brackets $\ell_i$. Suppose that $\sG$ is a 0-shifted symplectic local formal moduli problem on $\partial M$ (i.e. $\sG$ satisfies the axioms that $\sEb$ does in Definition \ref{def: tnbft}), and $\rho: \sF[-1]\to \iota_*\sG[-1]$ a map of sheaves with the following properties:
\begin{itemize}
\item $\rho$ intertwines the $L_\infty$ brackets (strictly), and
\item $\rho$ is given by the action of a differential operator acting on $\sF$, followed by evaluation at the boundary, followed by a differential operator 
\begin{equation}
\cinfty(\bdyM, F\mid_{\bdyM})\to \sG.
\end{equation}
\end{itemize}

Suppose 
\begin{equation}
h_F: F\otimes F \to \densM.
\end{equation}
is a bundle map. We write $h_{loc}$ for the induced map
\begin{equation}
h_{loc}: \sF\otimes \sF \to \Omega^n_{M,tw}
\end{equation}
and $h$ for the induced pairing on compactly-supported sections of $F$.
\begin{enumerate}
\item  The pairing $h$ is \textbf{a constant local isotropic structure on $\rho$}  precisely if
\begin{equation}
\label{eq: diff}
h(\ell_1 f_1, f_2)+(-1)^{|f_1|}h(f_1, \ell_1 f_2) = \ip[\rho f_1, \rho f_2]_{\sG},
\end{equation}

where $\ip_{\sG}$ is the symplectic pairing on $\sG$, and
\begin{equation}
\label{eq: higherbrackets}
h( \ell_k(f_1,\cdots, f_k), f_{k+1}) \pm h(f_1, \ell_k (f_2, \cdots, f_{k+1}))=0
\end{equation}
for $k>1$.
\item A constant local isotropic structure induces a map of complexes of sheaves
\begin{align}
\Psi:Cone(\rho) &\to \sF_{c}^\vee,\\
\Psi(f,g)(f')&=h(f,f')-\omega(g,\rho(f')).
\end{align}
We say that the isotropic structure is \textbf{Lagrangian} (i.e. that there is a constant local Lagrangian structure on $\rho$) if this map of complexes of sheaves is a quasi-isomorphism. Here, the symbol $\vee$ denotes the sheaf which, to an open $U\subset M$, assigns the strong continuous dual to $\sF_c(U)$. In other words, $\sF^\vee_c$ is the sheaf of distributional sections of $F^!$. 
\end{enumerate}
\end{definition}

\begin{remark}
Let us give a more geometric interpretation of the definition of isotropic structure. To this end, let $\fg$ and $\fh$ be $L_\infty$ algebras and $\rho: \fg\to \fh$ a strict $L_\infty$ map. The algebras $\fg$ and $\fh$ define formal moduli spaces $B\fg$ and $B\fh$. Let us suppose that $B\fh$ is 0-shifted symplectic with symplectic form $\ip_{\fh}$. Then, an isotropic structure on the map $\rho$ is an element $h\in \Omega^2_{cl}(B \fg)$ (the complex of closed two-forms on $B\fg$) such that 
\begin{equation}
\label{eq: isotropicdef}
Q^{TOT} h = \rho^*\left( \ip_{\fh}\right),
\end{equation}
where $Q^{TOT}$ is the total differential on $\Omega^2_{cl}(B\fg)$, which includes a Chevalley-Eilenberg term and a de Rham term. If one requires that both $\ip_{\fh}$, respectively $h$, be constant on $B\fh$, respectively $B\fg$, (i.e. are specified by an element of $\Lambda^2 (\fh[1])^\vee$, respectively $\Lambda^2 (\fg[1])^\vee$), then one obtains precisely Equations \eqref{eq: diff} and \eqref{eq: higherbrackets} from Equation \eqref{eq: isotropicdef}.
\end{remark}

\begin{remark}
Definition \ref{def: loclagrangianstr} is an adaptation to the setting of local $L_\infty$ algebras of the corresponding definitions for derived Artin stacks given in Section 2.2 of \autocite{ptvv}. 
\end{remark}

\begin{lemma}
\label{lem: fieldslagrangian}
The pairing $\ip_{\sE}$ endows the map $\sE\to \sEb$ with a constant local Lagrangian structure.
\end{lemma}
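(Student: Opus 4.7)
To prove this I need to verify two things: first, that $\ip_\sE$ satisfies the axioms (Equations \eqref{eq: diff} and \eqref{eq: higherbrackets}) for a constant local isotropic structure on $\rho$; and second, that the induced map $\Psi : Cone(\rho) \to \sE_c^\vee$ of Definition \ref{def: loclagrangianstr} is a quasi-isomorphism of complexes of sheaves on $M$.

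The isotropic conditions are immediate from Proposition \ref{prop: propsofTNBFTs}. The $\ell_1$-equation \eqref{eq: diff} is precisely Equation \eqref{eq: boundaryterm}, and the cyclicity conditions \eqref{eq: higherbrackets} for $k \geq 2$ follow from the second item of that proposition, which asserts that $(\sE_c[-1], \ell_k, \ip)$ fails to be a precosheaf of cyclic $L_\infty$ algebras only at the $\ell_1$ equation; in particular, all higher brackets are strictly cyclic with respect to $\ip$.

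For the quasi-isomorphism, I work sheaf-locally on $M$, using a basis of opens of two types: (A) interior opens $U \subset \mathring M$, and (B) boundary opens $U = U_\partial \times [0, \epsilon')$ with $U_\partial \subset \bdyM$ open and $\epsilon' < \epsilon$; at every stalk of $M$, one of these types provides a cofinal system. On opens of type (A), $\iota_* \sEb(U) = 0$, so $Cone(\rho)(U) = \sE(U)$ and $\Psi$ reduces to the usual pairing-induced map $e \mapsto \ip[e, -]$ from $\sE(U)$ to $\sE_c(U)^\vee$. This is a quasi-isomorphism by the standard elliptic-theoretic argument for classical BV theories on manifolds without boundary (see, e.g., \autocite{CG2}): ellipticity of $(\sE, \ell_1)$ together with non-degeneracy of $\ip_{loc}$ identifies smooth and distributional cohomology. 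On opens of type (B), the TNBFT decomposition of Definition \ref{def: tnbft} gives $\sE(U) \cong \sEb(U_\partial) \hotimes_\beta \Omega^\bullet([0, \epsilon'))$, under which $\rho$ factors as $\id_{\sEb(U_\partial)} \otimes \mathrm{ev}_0$, with $\mathrm{ev}_0 : \Omega^\bullet([0, \epsilon')) \to \RR$ a strong deformation retract in $\CVS$ via the explicit contracting homotopy $K(\omega)(t) = \int_0^t \omega$. Hence $Cone(\mathrm{ev}_0)$ is contractible in $\CVS$, and so is $Cone(\rho)(U) \cong \sEb(U_\partial) \hotimes_\beta Cone(\mathrm{ev}_0)$. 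By a parallel argument, $\Omega^\bullet_c([0, \epsilon'))$ is contractible via the homotopy $H(\omega)(t) = -\int_t^{\epsilon'} \omega$, so $\sE_c(U) \cong \sEb_c(U_\partial) \hotimes_\beta \Omega^\bullet_c([0, \epsilon'))$ and its continuous dual $\sE_c(U)^\vee$ are contractible as well. Both sides of $\Psi$ being acyclic on type (B) opens, $\Psi$ is trivially a quasi-isomorphism there.

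The main obstacle is functional-analytic: one must ensure that contractibility is genuinely preserved under the completed tensor product $\hotimes_\beta$ and under passage to continuous duals, so that the conclusion holds in $\CVS$ and descends appropriately to $\DVS$, where the abelian-category homological algebra takes place. The explicit continuity of $K$ and $H$ makes this straightforward in $\CVS$---tensoring an identity with such a homotopy yields a bounded contracting homotopy, and taking adjoints produces one on the dual---and the compatibility facts between $\CVS$ and $\DVS$ recalled in Appendix B of \autocite{CG1} then transfer the conclusion to $\DVS$.
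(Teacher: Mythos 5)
Your proposal is correct and follows essentially the same route as the paper: the isotropic conditions are read off from Proposition \ref{prop: propsofTNBFTs}, and the quasi-isomorphism is checked sheaf-locally on interior opens (via the Atiyah--Bott lemma) and on collar opens $U'\times[0,\delta)$ (by showing both $\cone(\rho)(U)$ and $\sE_c(U)^\vee$ are acyclic using explicit contracting homotopies in the normal direction and dualizing). The paper phrases the acyclicity of $\cone(\rho)(U)$ as a quasi-isomorphism from the cone of the identity on $\sEb(U')$ rather than as $\sEb(U')\hotimes_\beta\cone(\mathrm{ev}_0)$, but this is the same computation.
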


\begin{proof}
Equation \eqref{eq: boundaryterm} implies Equation \eqref{eq: diff}, while the invariance of $\ip_\sE$ under the higher ($k>1$) brackets implies Equation \eqref{eq: higherbrackets}. Hence $\ip_\sE$ gives a constant local isotropic structure on $\rho$. We need only to check that the induced map of sheaves
\begin{equation}
\left( \sE[1]\oplus \iota_* \sEb, \diff+\Qb \pm \rho\right) \to \sE_c^\vee
\end{equation}
is a quasi-isomorphism of sheaves. We prove this as follows: first, we note that for an open $U\subset M$ which does not intersect $\partial M$, we are studying the map
\begin{equation}
\sE[1](U)\to \sE^!(U) \to \sE_c(U)^\vee,
\end{equation}
where the first map is induced from the isomorphism $E[1]\to E^!$ arising from $\ip_{E}$. The composite map is a quasi-isomorphism because the first map is an isomorphism and the second map is the quasi-isomorphism of the Atiyah-Bott lemma (see, e.g., Appendix D of \autocite{CG1}). 

Now, let $U\cong U'\times [0,\delta)$, where $U'$ is an open subset of $\partial M$ and $U\subset \tubnhd$. We will show that both 
\begin{equation}
(\sE(U)[1]\oplus \sEb(U'),Q_{cone})
\end{equation}
and $\sE_c^\vee(U)$
are acyclic. 
Let's start with the latter complex. 
The following heuristic argument explains why one should expect the complex $\sE_c(U)$ to be acyclic.
Because the theory is topological normal to the boundary, any $e\in \sE_c(U)$ is cohomologous to its evaluation at some~$t\in [0,\delta)$.
Because $e$ is compactly-supported, evaluation at a sufficiently large $t$ will give 0. 

To make the preceding argument precise, write $e=e_1+e_2dt$ for $e\in \sE_{c}(U)$, and consider the map 
\begin{align}
&K: \sE_{c}(U)[1]\to \sE_{c}(U)\\
&K(e)(t)=(-1)^{|e_2|+1}\int_t^\delta e_2(s)ds;
\end{align}
one verifies that $K$ is a contracting homotopy for $\sE_{c}(U)$: 
\begin{align}
\diff K(e) &= e_2\wedge dt+(-1)^{|e_2|+1}\int_t^\delta \Qb e_2(s)ds\\
K\diff(e)(t)& = -e_1(\delta)+e_1(t)+(-1)^{|e_2|}\int_t^\delta \Qb e_2(s)ds,
\end{align}
so that $\diff K+K\diff=id$. 
Here, we have used the fact that $e_1(\delta)$ is 0 by the compact-support condition.
By dualizing these data, we construct a contracting homotopy for $\sE_c^\vee$.

Next, consider $(\sE(U)[1]\oplus \sEb(U'), Q_{cone})$. There is a natural map $\phi: \sEb(U')\to \sE(U)\cong \sEb(U')\hotimes_\beta \Omega^\bullet_{[0,\epsilon)}([0,\delta))$ induced from the map $\CC\to \Omega^\bullet_{[0,\epsilon)}([0,\delta))$. Let $C$ denote the mapping cone for the identity map on $\sEb(U')$, and define the map
\begin{align}
\Phi: C&\to \cone(\rho)(U)\\
\Phi(\alpha_0,\alpha_1) &=(\phi(\alpha_0),\alpha_1).
\end{align}
It is straightforward to check that $\Phi$ is a quasi-isomorphism. Hence, $\cone(\rho)(U)$ is acyclic. 
We have shown that, for every point $x\in M$, and any neighborhood $V$ of $x$, we have a neighborhood $U\subset V$ of $x$ on which the sheaf map under study is a quasi-isomorphism. This map is therefore a quasi-isomorphism of complexes of sheaves. 
\end{proof}

We would now like to choose another Lagrangian $\sL\to \sEb$, so that the homotopy pullback $\sL\times^h_{\sEb}\sE$ is $(-1)$-shifted. To make this precise, we need to choose a model category in which to take the homotopy pullback, and we need to introduce an appropriate class of Lagrangians $\sL$ which we will study. The following definition is intended to fulfill the latter aim.

\begin{definition}
\label{def: bdycond}
Let $\sE$ be a TNBFT. A \textbf{local boundary condition} for $\sE$ is a subbundle $L\subset \Eb$ endowed with brackets $\ell_{L,i}$ making $L[-1]$ into a local $L_\infty$ algebra satisfying the following properties:
\begin{itemize}
\item The induced map $\sL\to \sEb$ of sheaves of sections on $\bdyM$ intertwines (strictly) the brackets,
\item $\ip_{\Eb}$ is identically zero on $L\otimes L$, and
\item there exists a vector bundle complement $L'\subset \Eb$ on which $\ip_{\Eb}$ is also zero.
\end{itemize}
Such data are considered a \emph{local} boundary condition because the map $\sL\to \sEb$ arises from a bundle map $L\subset \Eb$. Since we have no need for boundary conditions which are not of this form, we will use the term ``boundary condition'' when we mean ``local boundary condition.''
\end{definition}

\begin{example}
\label{ex: toplmechbc}
Recall from Example \ref{ex: toplmech} that a symplectic vector space $V$ provides a TNBFT on $\R_{\geq 0}$. It is straightforward to check that a Lagrangian subspace $L\subset V=\sEb$ gives a local boundary condition for this theory.
\end{example}

\begin{example}
\label{ex: bfbdycond}
One can define two boundary conditions for BF theory (Example \ref{ex: bf}).
Recall that the space of boundary fields is 
\[
\sEb = \Omega^\bullet_{\bdyM}\otimes \fg [1]\oplus \Omega^\bullet_{\bdyM,tw}\otimes \fg^\vee[n-2];
\]
we may take either of these two summands as a boundary condition. 
We will call the former boundary condition the $A$ condition and the latter the $B$ condition.
\end{example}

\begin{example}
For Chern-Simons theory on an oriented 3-manifold $M$, the sheaf of boundary fields is
\begin{equation}
\Omega^\bullet_{\bdyM}\otimes \fg[1];
\end{equation}
let us choose a complex structure on $\bdyM$. Then, it is straightforward to verify that $\Omega^{1,\bullet}_{\bdyM}\otimes \fg$ gives a local boundary condition, the \textbf{chiral Wess-Zumino-Witten boundary condition}. The Chern-Simons/chiral Wess-Zumino-Witten system for abelian $\fg$ is the central example of \autocite{GRW}.
\end{example}

\subsubsection{The \condfieldsterm fields}
In this section, we define what we mean by a classical bulk-boundary system, and we give the promised homotopy-theoretic interpretation of the imposition of the boundary condition.
\begin{definition}
\label{def: clblkbdysystem}
Given a TNBFT $(\sE,\sEb,\ip)$ and a boundary condition $\sL\subset \sEb$, the \textbf{space of \condfieldsterm fields} $\condfields$ is the complex of sheaves 
\begin{equation}
\condfields(U):= \{e\in \sE(U)\mid \rho(e)\in (\iota_*\sL)(U)\}
\end{equation}
of fields in $\sE$ satisfying the boundary condition specified by $\sL$. In other words, $\condfields$ is the (strict) pullback $\sE\times_{\iota_*\sEb}(\iota_*\sL)$ taken in the category of presheaves of complexes on $M$.

A \textbf{classical bulk-boundary system} is a TNBFT together with a boundary condition.
\end{definition}

\begin{remark}
The term ``bulk-boundary system'' is appropriate for a much more general class of field-theoretic information. However, since we study only this specific type of bulk-boundary system, we omit qualifying adjectives from the terminology.
\end{remark}

\begin{lemma}
\label{ref: tildefieldsBV}
The brackets on $\sE[-1]$ descend to brackets on $\condfields[-1]$, and 
\begin{equation}
(\condfields,\ell_i,\ip_{\sE})
\end{equation}
forms a classical BV theory in the sense of Definition \ref{def: clbv}, except that $\condfields$ is not the sheaf of sections of a vector bundle on $M$.
\end{lemma}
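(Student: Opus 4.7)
The plan is to verify each clause of Definition \ref{def: clbv} for $(\condfields, \ell_i, \ip_\sE)$, modulo the clauses that explicitly require $\condfields$ to be the sheaf of smooth sections of a vector bundle. The two main points are: the brackets descend from $\sE$ to $\condfields$, and the boundary term obstructing cyclicity of $\ell_1$ on $\sE_c$ is killed once we restrict to $\condfieldscs$.

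First I would show that the operations $\ell_k$ restrict to $\condfields[-1]$. Given $e_1, \ldots, e_k \in \condfields(U)$, so that each $\rho(e_i) \in \sL(U \cap \bdyM)$, the strictness of $\rho$ (Proposition \ref{prop: propsofTNBFTs}(1)) gives
\begin{equation}
\rho(\ell_k(e_1,\ldots, e_k)) = \ell_{k,\partial}(\rho(e_1), \ldots, \rho(e_k)),
\end{equation}
and this lies in $\sL$ because $\sL[-1]$ is an $L_\infty$-subalgebra of $\sEb[-1]$ by Definition \ref{def: bdycond}. Hence $\ell_k(e_1, \ldots, e_k) \in \condfields(U)$. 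The higher Jacobi identities for $\ell_k$ on $\condfields[-1]$ are inherited as a special case of those on $\sE[-1]$, and the polydifferential character of each $\ell_k$ passes to the restricted operations since $\condfields \hookrightarrow \sE$ is simply a subpresheaf condition cut out by a differential constraint at $\bdyM$.

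The principal step is cyclic invariance of the brackets with respect to the induced pairing on $\condfieldscs$. For $k \geq 2$, the brackets are already cyclic on all of $\sE_c$ by Proposition \ref{prop: propsofTNBFTs}(2), so cyclicity holds automatically on the subpresheaf $\condfieldscs$. The delicate case is $\ell_1$, where Equation \eqref{eq: boundaryterm} reads
\begin{equation}
\ip[\ell_1 e_1, e_2] + (-1)^{|e_1|} \ip[e_1, \ell_1 e_2] = \ip[\rho(e_1), \rho(e_2)]_\partial.
\end{equation}
For $e_1, e_2 \in \condfieldscs$, each $\rho(e_i)$ lies in $\sL$, and the boundary pairing vanishes on $\sL \otimes \sL$ by the isotropy clause of Definition \ref{def: bdycond}. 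Hence the right-hand side is zero and $\ell_1$ acquires cyclicity on $\condfieldscs$; together with the higher-arity cyclicity, this makes $\condfields[-1]$ a precosheaf of cyclic $L_\infty$-algebras on $M$.

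The remaining axioms of Definition \ref{def: clbv} --- fiberwise non-degeneracy of $\ip_{loc}$ and ellipticity of $(\condfields, \ell_1)$ --- presuppose that $\condfields$ is the space of sections of a vector bundle on $M$, which it generally is not, and this is precisely the caveat granted by the lemma. The only conceptual obstacle is therefore the cyclicity of $\ell_1$, and it is resolved by design: the isotropy of $\sL$ in $\sEb$ was introduced exactly in order to kill the boundary term in Equation \eqref{eq: boundaryterm}.
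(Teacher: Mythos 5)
Your proof is correct and follows essentially the same route as the paper's: the brackets descend because $\condfields[-1]$ is a pullback of sheaves of $L_\infty$-algebras (equivalently, because $\rho$ strictly intertwines the brackets and $\sL$ is closed under them), and cyclicity holds because the only failure, the boundary term in Equation \eqref{eq: boundaryterm}, vanishes on $\condfieldscs$ by the isotropy of $\sL$. Your version merely spells out the details the paper leaves implicit.
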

\begin{proof}
The first statement follows from the fact that $\condfields[-1]$ is a pullback of sheaves of $L_\infty$-algebras on $M$. The only thing that remains to be verified is that the the pairing $\ip_{\sE}$ is cyclic with respect to the brackets $\ell_i$, once we restrict to $\condfields$. By our assumptions, the only failure of the brackets to be cyclic for $\sE$ is captured in Equation \eqref{eq: boundaryterm}. Upon restriction to $\condfields$, however, the boundary term in that equation vanishes.
\end{proof}

The previous lemma shows that the pullback 
\begin{equation}\sE\times_{\iota_*\sEb}(\iota_*\sL)\end{equation}
in the category of presheaves of shifted $L_\infty$ algebras on $M$ has what may be interpreted as a $(-1)$-shifted symplectic structure. 
(The infinite-dimensional nature of the space of fields makes this interpretation a bit imprecise, but nevertheless, the space of $\sL$-conditioned fields behaves in much the same way that the space of fields of a BV theory on a manifold without boundary does.) 
In the rest of this section, we explain why this is not an accident. 

As we have noted in Lemma \ref{lem: fieldslagrangian}, the map $\sE\to \iota_* \sEb$ is a Lagrangian map. 
Moreover, the map $\sL\to \sEb$ is also Lagrangian by assumption. 
We should therefore expect the homotopy pullback $\sE\times^h_{\iota_*\sEb}(\iota_*\sL)$ (in an appropriate model category) to have a $(-1)$-shifted symplectic structure.
The space of $\sL$-conditioned fields $\condfields$ is \emph{a priori} only the strict pullback.
However, in Lemma \ref{lem: tildefieldsmodel} below, we will show that $\condfields$ is indeed a model for this homotopy pullback. 

Let us first, however, describe the model category in which we would take the homotopy fiber product. The sheaves $\sE[-1],\iota_*\sEb[-1],\iota_*\sL[-1]$ are presheaves of $L_\infty$-algebras on $M$. In \autocite{hinichsheaves} (Theorem 2.2.1), a model structure on the category of such objects is given. The weak equivalences in this model category are those which induce quasi-isomorphisms of complexes of sheaves after sheafification, and the fibrations $f:M\to N$ are the maps such that $f(U):M(U)\to N(U)$ is surjective (degree by degree) for each open $U$ and such that for any hypercover $V_\bullet\to U$, the corresponding diagram 
\begin{equation}
\begin{tikzcd}
M(U)\ar[r]\ar[d] & \check{C}(V_\bullet,M)\ar[d]\\
N(U)\ar[r]& \check{C}(V_\bullet,N)
\end{tikzcd}
\end{equation}
is a homotopy pullback. This information about the model category is enough to show the following lemma:

\begin{lemma}
\label{lem: tildefieldsmodel}
In the model category briefly described in the preceding paragraph, the sheaf of \condfieldsterm fields $\condfields[-1]$ is a model for the homotopy pullback
 \begin{equation}(
 \sE[-1])\times^h_{\iota_*\sEb[-1]}(\iota_*\sL[-1])
 \end{equation}
 of presheaves of $L_\infty$-algebras on $M$.
\end{lemma}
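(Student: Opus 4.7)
The plan is to realize $\condfields[-1]$ as the strict pullback along a fibration in Hinich's model structure; since every object is fibrant in that model category, the strict pullback along a fibration automatically models the homotopy pullback. Concretely, I will show that the canonical submersion
\begin{equation*}
\rho:\sE[-1]\longrightarrow \iota_*\sEb[-1]
\end{equation*}
is a fibration. Since $\condfields[-1]=\sE[-1]\times_{\iota_*\sEb[-1]}(\iota_*\sL[-1])$ by Definition \ref{def: clblkbdysystem}, the lemma will follow immediately.

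First I would verify the degreewise surjectivity of $\rho$ on every open $U\subset M$; only the case $U\cap \bdyM\neq \emptyset$ is nontrivial. Using the TNBFT product decomposition $\sE|_{\tubnhd}\cong \sEb\hotimes_\beta \Omega^\bullet_{[0,\epsilon)}$ and a fixed smooth cutoff $\chi(t)$ equal to $1$ near $t=0$ and supported in $[0,\epsilon/2)$, any $\alpha\in \sEb^k(U\cap \bdyM)$ admits the lift $\chi(t)\cdot(\alpha\otimes 1)$, extended by zero outside $\tubnhd$; applying $\rho$ (i.e.\ pulling back to $t=0$) recovers $\alpha$ since $\chi(0)=1$. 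Next I would check the hypercover condition: for any hypercover $V_\bullet\to U$, the square relating $\sE(U)\to \iota_*\sEb(U)$ to $\check{C}(V_\bullet,\sE)\to \check{C}(V_\bullet,\iota_*\sEb)$ must be a homotopy pullback. Because $\sE$ and $\iota_*\sEb$ are sheaves of smooth sections of $\ZZ$-graded vector bundles (pushed forward along $\iota$ in the latter case), each degreewise component is a fine sheaf on $M$ and therefore satisfies hyperdescent; both horizontal arrows of the square are quasi-isomorphisms, making the square trivially a homotopy pullback. These two checks together show $\rho$ is a fibration.

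I expect the main obstacle to be the surjectivity clause: it relies critically on the TNBFT assumption, since without the product structure near $\bdyM$ there is no canonical extension $\alpha\mapsto \alpha\otimes 1$ by constants in the normal direction, and one would have to work harder (for instance, with more intricate collar-neighborhood extensions) to produce a lift. The hypercover condition and the reduction from homotopy pullback to strict pullback are, by contrast, formal consequences of fineness of sheaves of smooth sections and of the fact that $\sL\to \sEb$ automatically has fibrant source and target in Hinich's structure.
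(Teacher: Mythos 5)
Your strategy is the same as the paper's: exhibit $\condfields[-1]$ as the strict pullback of $\iota_*\sL[-1]$ along the fibration $\rho:\sE[-1]\to\iota_*\sEb[-1]$, where the fibration property is checked via degreewise surjectivity plus the hypercover condition, the latter following from hyperdescent of both source and target. Two points in your write-up need repair, though neither is fatal. First, it is not true that every object of Hinich's model category is fibrant: unwinding the definition of fibration with target $0$, the fibrant objects are exactly the presheaves satisfying hyperdescent, and a generic presheaf of $L_\infty$-algebras does not. What saves you is that the three objects actually appearing --- $\sE$, $\iota_*\sEb$, and $\iota_*\sL$ --- do satisfy hyperdescent (the paper establishes \v{C}ech descent via Lemma B.7.6 of \autocite{CG1} and upgrades to hypercovers using \autocite{HTT} together with a Dold--Kan/boundedness argument; your appeal to fineness is the same idea but elides that upgrade step), so the standard fact that a strict pullback of fibrant objects along a fibration is a homotopy pullback applies. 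You should either verify fibrancy of the three relevant objects explicitly or invoke right properness, rather than assert fibrancy of everything. Second, your surjectivity lift $\chi(t)\cdot(\alpha\otimes 1)$ with a \emph{fixed} cutoff $\chi$ supported in $[0,\epsilon/2)$ does not quite work for an arbitrary open $U$: the set $(U\cap\bdyM)\times[0,\epsilon/2)$ need not be contained in $U$ (think of a wedge-shaped $U$ thinning out toward the boundary), so the putative lift is not defined on all of $U$ and does not extend by zero. This is fixed by choosing a smooth positive function $\delta$ on $U\cap\bdyM$ with $\{(y,t):0\le t<\delta(y)\}\subset U$ and using a cutoff whose support in $t$ shrinks accordingly; the paper sidesteps this by simply declaring surjectivity manifest. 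With these two repairs your argument coincides with the paper's.
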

\begin{proof}
We first note the following: $\sE$ satisfies \v{C}ech descent for arbitrary covers in $M$, and $\sEb,\sL$ do the same on $\bdyM$, by Lemma B.7.6 of \autocite{CG1}. 
Because the \v{C}ech complex for $\iota_*\sL$ (resp. $\iota_*\sEb$) with cover $\{U_\alpha\}$ is identically the \v{C}ech complex for $\sL$ (resp. $\sEb$) with cover $\{U_\alpha\cap \bdyM\}$, $\iota_*\sL$ and $\iota_*\sEb$ satisfy \v{C}ech descent as presheaves on $M$. By Theorem 7.2.3.6 and Proposition 7.2.1.10 of \autocite{HTT}, these presheaves satisfy descent for arbitrary hypercovers. 
(Strictly speaking, the cited results are only proved for presheaves of simplicial sets on $M$; however, using the boundedness of $E$ stipulated in Definition \ref{def: tnbft}, we can shift all objects involved to be concentrated in non-positive degree and then use the Dold-Kan correspondence to show that hyperdescent and descent coincide for presheaves of (globally) bounded complexes.)

We claim that the map $\sE\to \iota_*\sEb$ is a fibration, whence the lemma follows immediately. It is manifest that the maps $\sE(U)\to \sEb(U\cap \bdyM)$ are surjective for every open $U\subset M$. So, it remains to check that the square  
\begin{equation}
\begin{tikzcd}
\sE(U)\ar[r]\ar[d] & \check{C}(V_\bullet,\sE)\ar[d]\\
\sEb(U)\ar[r]& \check{C}(V_\bullet,\sEb)
\end{tikzcd}
\end{equation}
is a homotopy pullback square of complexes for any hypercover $V_\bullet$. This is true because the above square is the outer square of the diagram
\begin{equation}
\begin{tikzcd}
\sE(U) \ar[rrr]\ar[ddd]\ar[rd,"\sim"]&&&\check{C}(V_\bullet,\sE)\ar[ddd]\ar[ld,"\sim"]\\
&\check{C}(V_\bullet,\sE)\ar[r,"\id"]\ar[d]&\check{C}(V_\bullet,\sE)\ar[d]&\\
&\check{C}(V_\bullet,\sEb)\ar[r,"\id"]&\check{C}(V_\bullet,\sEb)&\\
\sEb(U)\ar[rrr]\ar[ur,"\sim"]&&&\check{C}(V_\bullet,\sEb)\ar[lu,"\sim"]
\end{tikzcd};
\end{equation}
all the diagonal maps in the diagram are quasi-isomorphisms, and the inner square is clearly a homotopy pullback square.
\end{proof}

\section{Local Action Functionals and $D$-modules}
\label{sec: localfcnls}
In this section, we use the language of $D$-modules to describe the local action functionals in a TNBFT. Our approach extends the discussion of Section 6 of Chapter 5 of \cite{cost}. 
In so doing, we step somewhat outside the main line of development of this chapter.
The results of this section are not needed in the construction of a factorization algebra of classical observables, which we discuss in the next section; however, we present them here because they concern the classical theory and will be relevant in the obstruction theory for the quantization of a given classical theory.

Let us recall the double purpose of the space $\sO_{loc}(\sE)$ of local functionals associated to a field theory $\sE$ on a manifold $M$ without boundary.
Before choosing the brackets on $\sE[-1]$, or equivalently the action $S$, (but after choosing the pairing $\ip_{loc}$), $\sO_{loc}(\sE)[-1]$ is a graded Lie algebra; the Lie bracket $\{\cdot,\cdot\}$ on $\sO_{loc}(\sE)[-1]$ is the Poisson bracket induced from the pairing $\ip_{loc}$ on $E$.
A classical field theory on $M$ with underlying graded space of fields $\sE$ and pairing $\ip_{loc}$ is then given by a Maurer-Cartan element $S\in \sO_{loc}(\sE)[-1]$.
(Usually, we require $S$ to have no linear or constant terms. We may restrict the functionals in $\sO_{loc}(\sE)[-1]$ suitably to exclude this possibility.)
The Maurer-Cartan equation $\{S,S\}=0$ is simply the classical master equation in other terms.
Given such a Maurer-Cartan element $S$, $\sO_{loc}(\sE)[-1]$ acquires a differential $\{S,\cdot\}$; the triple $(\sO_{loc}(\sE)[-1],\{S,\cdot\}, \{\cdot, \cdot\})$ determines a differential-graded Lie algebra.
With suitable assumptions on the degree of dependence of the functionals on the fields, this differential-graded Lie algebra governs the deformations of the classical theory determined by $S$.
This is one interpretation of the local action functionals.

The other interpretation of the complex $(\sO_{loc}(\sE),\{S,\cdot\})$ is as the obstruction-deformation complex of the classical field theory.
Costello shows (see Chapter 5 of \autocite{cost}) that, given a quantum field theory defined modulo $\hbar^n$, the obstruction to lifting it to a theory defined modulo $\hbar^{n+1}$ is determined by a degree +1 cohomology class in the obstruction-deformation complex.
When this obstruction vanishes, the degree 0 cohomology classes describe the space of inequivalent lifts of the theory to order $\hbar^{n+1}$.
This is consistent with the interpretation of the degree 0 cohomology classes in $(\sO_{loc}(\sE),\{S,\cdot\})$ as the gauge-equivalence classes of first-order deformations of the classical theory (the Maurer-Cartan elements in the differential-graded Lie algebra $\sO_{loc}(\sE)[-1]\otimes \RR[\epsilon]/(\epsilon^2)$).

In practice, there are a number of computational tools available to study the cohomology of the obstruction-deformation complex; there are fewer available tools to find a small model for the Lie algebra structure.
This is particularly pronounced once one $M$ possesses a boundary.
In this case, the most natural generalization of $\sO_{loc}(\sE)$, which we denote by the symbol $\Oloc$, no longer possesses a Lie bracket.
Nevertheless, we endow $\Oloc$ with a differential analogous to $\{S,\cdot\}$.
The complex $\Oloc$ will have an interpretation as the obstruction-deformation complex for quantizations of the classical field theory (cf. \autocite{ERthesis}).
Moreover, the tricks that one uses in the computation of the cohomology of this complex when $\bdyM= \emptyset$ apply equally well when $\bdyM \neq \emptyset$.
We will apply some of these tools to BF theory in Section \ref{subsec: bfthyloc}.

We will leave the construction of a Lie bracket on $\sO_{loc}(\condfields)$ to future work. We remark on one possible approach to the construction of such a bracket.
Below (see Section \ref{sec: FAs}), we construct a $P_0$ bracket on the observables which have \emph{smooth first derivative}.
When $\bdyM=\emptyset$, local functionals have smooth first derivative, and the usual Poisson bracket of local functionals coincides with the $P_0$ bracket on observables obtained in this way.
When $\bdyM\neq \emptyset$, not all local functionals have smooth first derivative. 
Using the $P_0$ bracket defined in Section \ref{sec: FAs}, we may therefore form the Poisson bracket of local functionals $I$ and $J$ if both $I$ and $J$ have smooth first derivative.
Unfortunately, the obstruction theory for quantum bulk-boundary systems does not necessarily produce functionals with smooth first derivative.
We leave comparison of these two types of functionals for future work.

\subsection{Local action functionals}

When $\partial M=\emptyset$, one defines local action functionals to be of the form 
\begin{equation}
\varphi \mapsto \int_M D_1 \varphi\cdots D_k \varphi d\mu,
\end{equation}
where the $D_i$ are differential operators $\sE\to \cinfty_M$ and $d\mu$ is a density on $M$. We will define these to be local action functionals in the case $\bdyM\neq \emptyset$ as well, but one finds that certain local functionals in this sense (if they are total derivatives) can be written as integrals of a similar nature, \emph{but over $\partial M$}. Hence, the distinction between a local action functional and a codimension-one operator becomes blurry.

Before we define local functionals, we make the following definition, which will be useful in the sequel---in this chapter and especially in the quantum story (Chapter 4 of \autocite{ERthesis}).

\begin{definition}
Let $V$ be a locally convex topological vector space.
The space \textbf{power series} on $V$ is the following convenient vector space
\begin{equation}
    \sO(V):= \prod_{k=0}^\infty \underline{CVS}(V^{\hotimes_\beta k},\RR),
\end{equation}
where the subscript ``bdd'' means we take only the bounded linear maps of topological vector spaces, and the tensor product $\hotimes_\beta$ is the completed bornological tensor product (Section B.5.2 of \autocite{CG1}). 
\end{definition}

We may therefore define $\sO(\sE), \sO(\sE_c), \sO(\condfields),$ and $\sO(\condfieldscs)$.

\begin{definition}
\label{def: localfcnls}
The \textbf{space of local action functionals} for the field theory $\sE$ with boundary condition $\sL$ is the (linear) subspace of $\sO(\condfieldscs)$
spanned by functionals of the form
\begin{equation}
\label{eq: localfcnls}
I(\varphi)= \int_M D_1\varphi \cdots D_j\varphi d\mu.
\end{equation}
Here, $\condfieldscs$ denotes the subspace of those fields which have compact support and satisfy the required boundary condition; all $D_i$ are differential operators on $\sE$, and in particular, may depend on the normal derivatives of the field $\varphi$ near the boundary. 
\end{definition}

\begin{notation}
\label{not: lclfcnls}
Let us establish the following notation:
\begin{itemize}
\item We denote the space of local functionals by $\Oloc$.
\item We denote by the symbol $\Olocred$ the quotient of $\Oloc$ by the space of constant functionals.
\item We denote by the symbol $\sO_{loc}(\sE)$ the space of functionals in $\sO(\sE_c)$ of the form \eqref{eq: localfcnls}.
\item We denote by the symbol $\sO_{loc,red}(\sE)$ the quotient of $\sO_{loc}(\sE)$ by the space of constant functionals.
\end{itemize}
\end{notation}
\begin{remark}
\label{rem: loclfcnlsrmk}
\begin{enumerate}
\item Though $\Oloc$ is a subspace of $\sO(\condfieldscs)$ (and this latter space is endowed with a topology), we do not wish to view $\Oloc$ as a topological vector space. We simply remember the vector space structure underlying $\Oloc$.
\item Note that the same functional can be written in two different ways, i.e. the representation of a local functional $I(\phi)$ in the form $\int_M D_1\varphi \cdots D_j \varphi d\mu$ is not unique.  
\end{enumerate}
\end{remark}

We note that $\Oloc$ contains functionals of the form 

\begin{equation}
I(\varphi)=\int_{\partial M} D_{1}(\varphi)\mid_{\partial M}\cdots D_{k}(\varphi)\mid_{\partial M}\mathrm{d} \nu = -\int_M \frac{\del}{\del t}\left( f_0(t) D_1\phi \cdots D_k\phi\right) \mathrm{d} \nu \,\mathrm{d} t,
\end{equation}
where the $D_i$ are differential operators on $\sE$ and $\mathrm{d}\nu$ is a density on $\bdyM$. 
Here, $f_0$ is a compactly supported function on $[0,\epsilon)$ which is $1$ at $t=0$.
Writing $I$ as on the right-hand side of the above equation makes clear that it is local.

We obtain a natural filtration of $\Oloc$ with $F^0\Oloc$ the space of such functionals and $F^1\Oloc = \Oloc$. This filtration will be useful to us in the sequel.

The Chevalley-Eilenberg differential on $\sO(\condfieldscs)$ preserves $\Oloc$, so that $\Oloc$ is a cochain complex. Moreover, $\Oloc$ evidently has the structure of a presheaf on $M$, since $\condfieldscs$ is a cosheaf. Thus, $\Oloc$ is a complex of presheaves on $M$. It is also easily seen to be a complex of sheaves on $M$.

\subsection{The case $\bdyM=\emptyset$}
\label{sec: localfcnlsnobdy}
Let us first briefly review the relationship between $D$-modules and local action functionals in the case $\bdyM=\emptyset$. Our discussion follows Section 6 of Chapter 5 of \cite{cost}.

\begin{notation}
Given a vector bundle $E$ over a manifold $M$, we let $J(\sE)$ denote the (infinite-rank) bundle over $M$ whose fiber at a point $x\in M$ is the space of formal germs of sections of $E$ at the point $x$. We let $\sJ(\sE)$ denote the sheaf of sections of $J(\sE)$. $\sJ(\sE)$ has a natural topology which it obtains as the inverse limit of the finite-rank bundles of $r$-jets (as $r\to \infty$). We let $\sJ(\sE)^\vee$ denote the $\cinfty_M$ module
\begin{equation}
\Hom_{\cinfty_M}(\sJ(\sE),\cinfty_M),
\end{equation}
where $\Hom_{\cinfty_M}$ denotes $\cinfty_M$-module homomorphisms which are continuous for the inverse limit topology on $\sJ(\sE)$.

We let 
\begin{equation}
\sO(\sJ(\sE))=\prod_{k\geq 0}\Sym^k_{\cinfty_M} \sJ(\sE)^\vee.
\end{equation} 
The symbol $\sO_{red}(\sJ(\sE))$ denotes a similar product over $k>0$.
\end{notation}
$J(\sE)$ carries a canonical connection and so $\sJ(\sE)$ carries the structure of a left $D_M$-module. This $D_M$-module structure gives rise to one on $\sJ(\sE)^\vee$ and $\sO(\sJ(\sE))$. The $L_\infty$ brackets on $\sO(\sE)$, because they are polydifferential operators, can be used to endow $\sO(\sJ(\sE))$ with a Chevalley-Eilenberg differential which respects this $D_M$-module structure. Let us recall also that the bundle of densities $\Dens_M$ carries a structure of right $D_M$-module, where vector fields act on densities by the formal adjoints of their actions on functions (equivalently, by Lie derivatives).
When $\bdyM\neq \emptyset$, we will see that these two $D_M$-module structures are no longer the same; in fact, one of these $D_M$-module structures will cease to be defined.

The following is Lemma 6.6.1 of \cite{cost}:
\begin{lemma}
Suppose $\bdyM=\emptyset$. There is a canonical map of sheaves
\begin{equation}
\Dens_M\otimes_{D_M}\sO(\sJ(\sE))\to \sO_{loc}(\sE),
\end{equation}
and it is an isomorphism.
\end{lemma}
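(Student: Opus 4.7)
The plan is to construct the map explicitly, verify that it factors through the $D_M$-tensor via integration by parts, and then reduce the bijection statement to a local calculation in a coordinate chart.

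First, I would build the map on representatives. Given an open $U \subset M$, a section $\mu$ of $\Dens_M$ over $U$, and an element $f \in \sO(\sJ(\sE))(U)$, the assignment
\[
\varphi \;\longmapsto\; \int_U f(j^\infty \varphi) \cdot \mu
\]
is well-defined for $\varphi \in \sE_c(U)$, because locally $f$ is a (formal) power series in finitely many jet coordinates and hence depends on only finitely many derivatives of $\varphi$ pointwise. Expanding $f$ into its polynomial summands in the jet coordinates exhibits this as a sum of expressions of the form $\int_U D_1\varphi \cdots D_k\varphi\, \mathrm d\mu$, so the output lies in $\sO_{loc}(\sE)(U)$.

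Second, I would check that the construction descends to $\otimes_{D_M}$. The relevant relation is $\mu \cdot X \otimes f = \mu \otimes X\cdot f$ for a vector field $X$; on $\Dens_M$, the right $D_M$-action is by minus the Lie derivative, while on $\sO(\sJ(\sE))$ the left $D_M$-action comes from the canonical flat connection on $J(\sE)$, whose horizontal sections are exactly the infinity-jets $j^\infty\varphi$. The required identity
\[
\int_U (X \cdot f)(j^\infty \varphi)\, \mu \;=\; -\int_U f(j^\infty \varphi)\, \mathcal{L}_X \mu
\]
is then just integration by parts, and this is exactly the point at which the hypothesis $\bdyM = \emptyset$ is used: with a boundary we would pick up Stokes-type terms on $\bdyM$. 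Surjectivity is then immediate from Definition \ref{def: localfcnls}: a generating local functional $\varphi \mapsto \int_M D_1\varphi\cdots D_j\varphi\, \mathrm d\mu$ is the image of $\mu \otimes f$, where $f$ is the polynomial on jets obtained from composing the symbols of $D_1,\ldots,D_j$.

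The main obstacle will be injectivity, which is the classical statement from the calculus of variations that a local density gives the zero functional on compactly supported fields precisely when it is a total divergence. I would reduce this to a local calculation: choose a chart $U$ on which $E$ trivializes, and use jet coordinates $\{x^i, \varphi^a_\alpha\}$ (with $\alpha$ a multi-index) to identify $\sO(\sJ(\sE))(U)$ with a completed symmetric algebra whose generators $\varphi^a_\alpha$ are permuted by the $D_U$-action via $\partial_i \cdot \varphi^a_\alpha = \varphi^a_{\alpha + e_i}$. The $D_U$-relations can then be used to produce a canonical representative of each class in $\Dens_U \otimes_{D_U} \sO(\sJ(\sE))(U)$, for instance by iteratively eliminating jet coordinates of positive order (a Koszul-type normal form, equivalent to computing $\Dens_U$ by its de Rham resolution as a right $D_U$-module). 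Nonvanishing of the associated functional can then be tested against bump-function-valued $\varphi$ of arbitrarily high jet order, showing that a nonzero normal form produces a nonzero functional. Because both sides are sheaves on $M$, verifying the bijection over each such chart and patching by a partition of unity completes the argument.
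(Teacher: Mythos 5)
The paper does not actually prove this lemma---it is quoted as Lemma 6.6.1 of Chapter 5 of \autocite{cost}---so there is no in-paper argument to compare yours against; you are supplying a proof of a black-boxed citation. Your outline is essentially the standard one from that source: the map integrates a Lagrangian density of jets against a compactly supported field, descent through $\otimes_{D_M}$ is integration by parts (and this is exactly where $\bdyM=\emptyset$ enters, consistent with the boundary term the paper isolates in Equation \eqref{eq: dmdens}), and surjectivity is immediate from the shape of Definition \ref{def: localfcnls}. All of this is fine, modulo the cosmetic point that an element of $\sO(\sJ(\sE))$ is a formal power series, so the ``integral'' must be read Taylor-component by Taylor-component, matching the product decomposition of the target.

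The step I would push back on is the normal form in your injectivity argument. ``Iteratively eliminating jet coordinates of positive order'' is not something the $D_M$-relations allow: integration by parts only redistributes derivatives among the tensor factors of a monomial, it does not remove them (for instance $\int (\partial_x\varphi)^2\,\mathrm{d}x$ is equivalent to $-\int \varphi\,\partial_x^2\varphi\,\mathrm{d}x$, but to no functional free of positive-order jets). The normal form that the Koszul/de Rham resolution of $\Dens_M$ actually produces is the Euler--Lagrange form, in which all derivatives are moved off of one distinguished factor, $\int_M \varphi^a F_a(j^\infty\varphi)\,\mathrm{d}^nx$ with $F_a$ a polydifferential operator. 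With that correction your strategy does close up: the $k$-th Taylor coefficient of such a functional is a symmetric distribution on $M^k$ supported on the small diagonal and determined by $F$, and testing against tuples of bump sections realizing arbitrary finite jets at a point forces $F=0$ when the functional vanishes. You should, however, also address why passing to the symmetrization (the functional only sees the $S_k$-coinvariants of the Taylor coefficients) cannot kill a nonzero normal form; this is the fiddliest point in the classical argument and is silently elided in your sketch.
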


Note that the $D_M$-module tensor product ensures that Lagrangian densities which are total derivatives give zero as functionals.

We would like to resolve $\Dens_M$ as a right $D_M$-module. 

\begin{definition}
The \textbf{$D_M$-de Rham complex} is the complex of sheaves
\begin{equation}
\dmdr:=\Omega^\bullet_{M,tw}\otimes_{\cinfty_M} D_M;
\end{equation}
this is the de Rham complex for the flat vector bundle $D_M\otimes \mathfrak o $, where $\mathfrak o$ is the orientation line bundle on $M$. The complex of sheaves $\dmdr$ has the canonical structure of a right $D_M$ module. 
\end{definition}

There is a canonical map $\dmdr[n] \to \Omega^n_{M,tw}$ induced from the right $D_M$-module action
\begin{equation}
\Omega^n_{M,tw}\otimes_{\cinfty_M} D_M \to \Omega^n_{M,tw}
\end{equation}
This map is a quasi-isomorphism, since this complex is locally the Koszul resolution of $\Omega^n_{M,tw}(U)$ with respect to the regular sequence $\{\partial_i\}_{i=1}^n$ of $D_M(U)$. 
Combining this information and Lemma 6.6.4 of Chapter 5 of \cite{cost}, we have 
\begin{lemma}
\label{lem: derhamloc}
Let $\bdyM=\emptyset$. The canonical map 
\begin{equation}
\dmdr[n]\otimes_{D_M}\sO_{red}(\sJ(\sE))\to \Olocred
\end{equation}
is a quasi-isomorphism.
\end{lemma}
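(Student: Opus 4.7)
The plan is to factor the map in question as
\begin{equation}
\dmdr[n]\otimes_{D_M}\sO_{red}(\sJ(\sE)) \xrightarrow{\alpha} \Omega^n_{M,tw}\otimes_{D_M}\sO_{red}(\sJ(\sE)) \xrightarrow{\beta} \Olocred,
\end{equation}
where $\alpha$ is induced by the quasi-isomorphism $\dmdr[n]\to \Omega^n_{M,tw}$ recalled just before the statement of the lemma, and $\beta$ is the reduced variant of the previous lemma (Lemma 6.6.1 of \cite{cost}). The second map $\beta$ is an isomorphism: the previous lemma gives an isomorphism on the unreduced side, and quotienting by constant functionals in $\sO(\sJ(\sE))$ corresponds exactly to quotienting by constant local functionals, so the reduced version follows by the same argument. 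The work is therefore all in showing that $\alpha$ is a quasi-isomorphism.

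For this, I would first observe that $\dmdr[n]$ is a resolution of $\Omega^n_{M,tw}$ by flat right $D_M$-modules: each term $\Omega^k_{M,tw}\otimes_{\cinfty_M} D_M$ is obtained by inducing a (locally free) $\cinfty_M$-module up to $D_M$, hence is a locally free right $D_M$-module of rank equal to $\binom{n}{k}$. Consequently $\dmdr[n]\otimes_{D_M}(-)$ computes the derived tensor product $\Omega^n_{M,tw}\otimes^L_{D_M}(-)$ on any complex of left $D_M$-modules, and in particular on $\sO_{red}(\sJ(\sE))$.

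The remaining and main obstacle is to verify that the underived tensor product agrees with the derived one, i.e.\ that $\mathrm{Tor}^{D_M}_{>0}(\Omega^n_{M,tw},\sO_{red}(\sJ(\sE)))=0$. This is a local statement on $M$, so one may reduce to a chart $U\cong \mathbb{R}^n$ on which the orientation bundle trivializes and $D_M(U)$ is the Weyl-type algebra $\cinfty(U)\langle \partial_1,\ldots,\partial_n\rangle$. In these coordinates the complex $\dmdr[n]|_U\otimes_{D_M}\sO_{red}(\sJ(\sE))|_U$ identifies with a Chevalley-Eilenberg-style de Rham complex built from $\sO_{red}(\sJ(\sE))|_U$, and one can run an explicit filtration argument on the polynomial degree in the fiber coordinates of the jet bundle. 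At each filtration step, the associated graded reduces to the statement that the algebraic de Rham complex of a polynomial ring in fiber coordinates with its translation-invariant flat connection is concentrated in top degree, which is a standard computation.

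Finally, I would package these local quasi-isomorphisms into a global statement by invoking that the complexes on both sides are complexes of sheaves of $\cinfty_M$-modules admitting partitions of unity, so that a map which is locally a quasi-isomorphism is globally one. I expect that the cleanest presentation is the one just sketched: it mirrors exactly the argument given in Lemma 6.6.4 of Chapter 5 of \cite{cost} for the case without boundary, so one can instead simply invoke that result, noting that it applies verbatim since $\bdyM=\emptyset$ here.
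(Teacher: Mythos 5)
The proposal is correct and takes essentially the same route as the paper, whose entire justification is likewise to combine the observation that $\dmdr[n]$ is a locally free (locally Koszul) resolution of $\Omega^n_{M,tw}$ as a right $D_M$-module with a citation of Lemma 6.6.4 of Chapter 5 of Costello's book; the paper gives no further detail. Your additional sketch of the Tor-vanishing is just the standard argument underlying that cited lemma, and you correctly note one may simply invoke it since $\bdyM=\emptyset$.
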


\subsection{Pushforwards and pullbacks of $D$-modules}

We now assume that $\bdyM\neq \emptyset$.
In the sequel, there will be a number of situations where we will be presented with a $D_M$-module but will want a $D_\bdyM$-module or vice versa;
this section is intended to give us the fluidity to move between the two notions.
We will discuss two functors, $\dpush$ and $\dpull$.
The functor $\dpush$ will take right $D_\bdyM$-modules to right $D_M$-modules, and $\dpull$ will do the opposite for \emph{left} $D$-modules.
We note that these techniques can be used in a more general situation, namely one where $\bdyM$, $M$, and $\iota$ are replaced with general smooth manifolds $X$ and $Y$ and a general smooth map $f:X\to Y$.
Though we do not explicitly make the constructions for the general setup, 
it will be clear how to make the necessary (very small) modifications for the general situation.

We make no claims of originality here.

In both constructions, we will make use of the following sheaf on $\bdyM$:
\begin{equation}
\label{eq: pullD}
 D_{\bdyM \to M}:=\cinfty_\bdyM \otimes_{\iota^{-1}(\cinfty_M)} \iota^{-1}(D_M);
\end{equation}
this sheaf possesses a manifest right $\iota^{-1}(D_M)$ action.
Let $V\to M$ momentarily denote the bundle on $M$ whose sheaf of sections is~$D_M$.
(Concretely, $V= J(\underline \RR)^\vee$.)
One observes that $D_{\bdyM\to M}$ is simply the sheaf of sections of~$\iota^*(V)$.
From this characterization of $D_{\bdyM \to M}$, one finds also a natural left $D_{\bdyM}$ action on $D_{\bdyM \to M}$, using the pullback connection on~$\iota^*(V)$.
In the language of Equation \ref{eq: pullD}, given $X\in \textrm{Vect}(\bdyM)$, $f\in \cinfty_\bdyM$, and $D\in \iota^{-1}(D_M)$, we define
\begin{equation}
\label{eq: pullbackconnection}
X\cdot \left( f\otimes D \right):= (Xf)\otimes D +f\otimes (\widehat X D),
\end{equation}
where $\widehat X$ is a vector field on $M$ such that 
\begin{equation}
\widehat X \mid_\bdyM = X
\end{equation}
in the space $\Gamma(\bdyM,\iota^* TM)$.
One checks readily that this definition does not depend on $\widehat X$ and respects the tensor product over $\iota^{-1}(\cinfty_M)$.

\subsubsection{Pushforward}
Let $P$ be a right $D_{\bdyM}$ module. We may define 
\begin{equation}
\label{eq: pushM}
\dpush P = \iota_*\left(P\otimes_{D_\bdyM} D_{\bdyM \to M}\right);
\end{equation}
the sheaf 
\begin{equation}
\iota_*(D_{\bdyM \to M})
\end{equation}
possesses a natural right $\iota_*\iota^{-1}D_M$ module structure, and hence a natural right $D_M$ structure via the map of sheaves of algebras $D_M\to \iota_*\iota^{-1}D_M$.
It follows that $\dpush P$ is a right $D_M$ module.

Note that, for $P=D_\bdyM$, one finds~$\dpush D_\bdyM \cong \iota_*(D_{\bdyM \to M})$.
\subsubsection{Pullback}

If $\sV$ is the sheaf of sections of a finite-rank vector bundle $V\to M$, then to give a left $D_M$-module structure on $\sV$ is to give a flat connection on $V$.
We may form the pullback $\iota^* V$ on $\bdyM$; the bundle $\iota^*V$ has a natural ``pullback'' flat connection, so that its sheaf of sections is a $D_\bdyM$-module.
In other words, for $D_M$-modules arising in this way as the sheaf of sections of a vector bundle, we have discussed a construction of pullback to $\bdyM$.
Our aim in this section is to give the analogous construction for general left $D_M$-modules.

Hence, given a left $D$ module $P$, define 
\begin{equation}
\label{eq: pullM}
\dpull P := D_{\bdyM \to M} \otimes_{\iota^{-1}(D_M)} \iota^{-1}(P).
\end{equation}
If $P$ is the left $D_M$ module $D_M$, one finds that~$\dpull D_M \cong D_{\bdyM \to M}$.

\subsubsection{A useful compatibility between $\dpull$ and $\dpush$}
In this subsection, we note 

\begin{lemma}
\label{lemma: pullpush}
Let $P$ be a right $D_\bdyM$ module and $Q$ a left $D_M$-module. There are natural isomorphisms
\begin{equation}
\dpush P \otimes_{D_M} Q \cong \iota_* \left( P\otimes_{D_\bdyM} \dpull Q\right)
\end{equation}
and
\begin{equation}
\iota^{-1}\left( \dpush P\otimes_{D_M} Q\right) \cong P \otimes_{D_{\bdyM}} \dpull Q.
\end{equation}
of sheaves on $M$ and $\bdyM$, respectively.
\end{lemma}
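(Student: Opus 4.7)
The plan is to establish the second isomorphism first by unpacking definitions, and then to deduce the first by applying $\iota_*$. The key facts I will use are: (i) $\iota^{-1}$ is a left adjoint and hence preserves all colimits, so in particular converts $\otimes_{D_M}$ into $\otimes_{\iota^{-1}D_M}$; (ii) $\iota^{-1}\iota_* \cong \id$ on sheaves supported in $\bdyM$; and (iii) $\iota_*\iota^{-1} \cong \id$ on sheaves on $M$ whose support lies in $\bdyM$.

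For the second isomorphism, unpacking $\dpush P = \iota_*(P \otimes_{D_\bdyM} D_{\bdyM \to M})$ and chaining these facts with associativity of tensor products yields
\begin{align*}
\iota^{-1}\bigl(\dpush P \otimes_{D_M} Q\bigr) &\cong \iota^{-1}\iota_*\bigl(P \otimes_{D_\bdyM} D_{\bdyM \to M}\bigr) \otimes_{\iota^{-1}D_M} \iota^{-1}Q \\
&\cong \bigl(P \otimes_{D_\bdyM} D_{\bdyM \to M}\bigr) \otimes_{\iota^{-1}D_M} \iota^{-1}Q \\
&\cong P \otimes_{D_\bdyM} \bigl(D_{\bdyM \to M} \otimes_{\iota^{-1}D_M} \iota^{-1}Q\bigr) = P \otimes_{D_\bdyM} \dpull Q,
\end{align*}
where the first line uses (i), the second uses (ii), and the third requires the commuting $(D_\bdyM,\iota^{-1}D_M)$-bimodule structure on $D_{\bdyM \to M}$ (the left action being the pullback-connection action of \eqref{eq: pullbackconnection}).

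For the first isomorphism, I apply $\iota_*$ to both sides of the second. Since $\dpush P$ is of the form $\iota_*(\cdots)$, its support lies in $\bdyM$, and hence so does that of $\dpush P \otimes_{D_M} Q$. By (iii), $\iota_*\iota^{-1}\bigl(\dpush P \otimes_{D_M} Q\bigr) \cong \dpush P \otimes_{D_M} Q$, so the first isomorphism is the image of the second under $\iota_*$.

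The main technical point is fact (i): that $\iota^{-1}$ commutes with tensor products over sheaves of rings in the appropriate sense. This is formal from $\iota^{-1}$ being a left adjoint (and so preserving colimits) combined with the presentation of a tensor product of modules as a coequalizer; the one subtlety is tracking the interplay of the two-sided bimodule structure on $D_{\bdyM \to M}$ with the associativity step, but this follows once one observes that the pullback-connection left $D_\bdyM$-action commutes by construction with the tautological right $\iota^{-1}D_M$-action.
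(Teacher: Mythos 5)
The paper states Lemma \ref{lemma: pullpush} with no proof at all (the subsection reads ``In this subsection, we note'' followed immediately by the statement, consistent with the earlier disclaimer ``We make no claims of originality here''), so there is no argument in the paper to compare yours against. Your proof is correct and supplies the standard argument one would expect: the second isomorphism follows from $\iota^{-1}$ commuting with $\otimes$ over a sheaf of rings, the closed-embedding identity $\iota^{-1}\iota_*\cong\id$, and associativity of the tensor product through the $(D_{\bdyM},\iota^{-1}D_M)$-bimodule $D_{\bdyM\to M}$; the first then follows by applying $\iota_*$ and using that $\dpush P\otimes_{D_M}Q$ is supported on the closed subset $\bdyM$ (its stalks vanish wherever those of $\dpush P$ do), so the unit $\cF\to\iota_*\iota^{-1}\cF$ is an isomorphism there. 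You correctly flag and dispose of the only real checkpoints: that the pullback-connection left $D_{\bdyM}$-action of Equation \eqref{eq: pullbackconnection} commutes with the tautological right $\iota^{-1}(D_M)$-action (so the middle tensor factor genuinely is a bimodule and reassociation is legitimate), and that the induced right $\iota^{-1}D_M$-structure on $\iota^{-1}\dpush P$ agrees with the tautological one via the unit--counit triangle identity. I see no gap.
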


\subsection{$\sO_{loc}(\sE)$ in the $D$-module language}
\label{sec: OlocE}
We now proceed to generalize the discussion of Section \ref{sec: localfcnlsnobdy} to the case that $\bdyM\neq \emptyset$. There are two new subtleties that arise in this situation. 
The first is that, when $\bdyM\neq\emptyset$, Lagrangian densities which are total derivatives no longer necessarily give zero as local action functionals.
The second is that we have imposed boundary conditions on the fields, so some functionals vanish because they depend on components of the boundary value of fields which are constrained to be zero.
In this section, we address the first subtlety.
In Section \ref{sec: OlocEL}, we address the second.
To the aim of understanding this first subtlety, consider the inclusion of $\cinfty_M$ modules $\Omega^{n}_{M,tw} \hookrightarrow \cD_M$ (where $\cD_M$ is the sheaf of distributions on $M$). 
Whether or not $\bdyM = \emptyset$, $\cD_M$ has a $D_M$-module structure; however, only when $\bdyM=\emptyset$ is the subspace of smooth densities closed under this $D_M$ action.
To see this, let $\omega$ be a density on $M$, let $T_\omega$ denote the corresponding distribution on $M$, i.e.
\begin{equation}
T_\omega(f) = \int_M f\omega.
\end{equation}
Let $X$ be a vector field on $M$; then, we find upon integration by parts that
\begin{equation}
\label{eq: dmdens}
\left(T_{\omega}\cdot X\right) (f) = T_\omega(X\cdot f) = \int_M (Xf)\omega = -\int_M f L_X \omega + \int_\bdyM f\mid_{\bdyM} \iota^* i_X \omega.
\end{equation}
From the last term in this equation, it follows that $(T_\omega)\cdot X$ is no longer in general a smooth density on $M$ if $\bdyM\neq \emptyset$.

On the other hand, $\Omega^n_{M,tw}$ does possess a $D_M$-module structure, even when $\bdyM\neq \emptyset$, namely the one induced from the action of vector fields on smooth densities by Lie derivatives.
(In other words, this is the action obtained by keeping the first of the two terms on the right hand side of Equation \eqref{eq: dmdens}.)
We emphasize that this is \emph{not} the $D_M$-module structure relevant for the construction of local functionals.
To see this, note that if $D_1\cdot\ldots \cdot D_k \in \sO(\sJ(\sE))$ and $\mu\in \Omega^{n}_{M,tw}$, we would like to understand $\mu\otimes D_1\cdot \ldots \cdot D_k$ as the functional
\begin{equation}
\int_M \mu\, D_1\phi \,\cdot \ldots \cdot \, D_k \phi\,;
\end{equation}
in other words, densities appear in local functionals in their capacity as distributions on $M$, whether or not $\bdyM=\emptyset$.
Of course, when $\bdyM=\emptyset$, Equation \eqref{eq: dmdens} shows that this $D_M$-module structure coincides with the one obtained by viewing densities as distributions.

We therefore seek the minimal sub-$D_M$-module of $\cD_M$ containing $\Omega^n_{M,tw}$.
That is the purpose of the following definition.

\begin{definition}
The sheaf of \textbf{$\partial$-smooth densities} $\delsmoothdistr$ is the smallest right sub-$D_M$ module of $\cD_M$ containing the smooth densities. 
\end{definition}

Let us come to a better understanding of the sheaf~$\delsmoothdistr$. 
This sheaf certainly contains the sheaf~$\Omega^n_{M,tw}$.
By Equation \eqref{eq: dmdens}, we see that it also contains the sheaf~$\iota_*\left(\Omega^{n-1}_{\bdyM,tw}\right)$, and therefore also the $D_M$-module this sheaf generates in $\cD_M$.
This last $D_M$ module is characterized in the following Lemma:

\begin{lemma}
\label{lem: bdydens}
The right $D_M$-module generated by $\iota_*\left(\Omega^{n-1}_{\bdyM,tw}\right)$ inside of $\cD_M$ is isomorphic to the right $D_M$-module
\begin{equation}
F^0 \delsmoothdistr:= \dpush \Omega^{n-1}_{\bdyM,tw}
\end{equation}
(see Equation \ref{eq: pushM} for the notation~$\dpush$).
\end{lemma}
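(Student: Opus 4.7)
The plan is to construct an explicit right $D_M$-module map $\phi : \dpush \Omega^{n-1}_{\bdyM,tw} \to \cD_M$ whose image is precisely the submodule generated by $\iota_*\Omega^{n-1}_{\bdyM,tw}$, and then verify that $\phi$ is injective.

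First I would define $\phi$ as the canonical extension of the natural inclusion $\iota_*\Omega^{n-1}_{\bdyM,tw} \hookrightarrow \cD_M$ (which sends $\omega$ to the distribution $T_\omega : g \mapsto \int_\bdyM g\mid_\bdyM \omega$). Explicitly, on simple tensors I set
\[
\phi(\omega \otimes f \otimes D)(g) := \int_\bdyM f \cdot (Dg)\mid_\bdyM \cdot \omega, \qquad g \in \cinfty_{M,c}.
\]
Well-definedness with respect to the relations of the tensor product over $D_\bdyM$ follows from integration by parts on the boundaryless manifold $\bdyM$ together with the formula \eqref{eq: pullbackconnection} for the left $D_\bdyM$-action on $D_{\bdyM \to M}$: for $X \in \textrm{Vect}(\bdyM)$ with extension $\widehat X$, one uses $(\widehat X h)\mid_\bdyM = X(h\mid_\bdyM)$ for $h \in \cinfty_M$ to equate the contributions of $(\omega \cdot X) \otimes (1 \otimes D)$ and $\omega \otimes (X \cdot (1 \otimes D))$. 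The $D_M$-equivariance of $\phi$ is then immediate from the definition of the right $D_M$-action on $\cD_M$ as $(T \cdot E)(g) = T(Eg)$.

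For the image, note that $\phi(\omega \otimes 1 \otimes 1) = T_\omega$, so the image contains the canonical copy of $\iota_*\Omega^{n-1}_{\bdyM,tw}$ and hence, by $D_M$-equivariance, contains the entire submodule of $\cD_M$ that it generates. Conversely, every simple tensor can be rewritten by absorbing the $\cinfty_\bdyM$-factor into the differential operator via a lift to $\cinfty_M$, so $\phi$ maps it to an element of the form $T_\omega \cdot D'$, which lies in the generated submodule.

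The main obstacle will be injectivity. Since $\phi$ is a map of sheaves on $M$, injectivity can be checked locally. The key step is establishing a local normal form: in boundary coordinates $(x_1, \ldots, x_{n-1}, t)$ with $\bdyM = \{t = 0\}$, every element of $\Omega^{n-1}_{\bdyM,tw} \otimes_{D_\bdyM} D_{\bdyM \to M}$ can be expressed as a finite sum $\sum_k \omega_k \otimes 1 \otimes \partial_t^k$. This follows from the identity $\partial_{x_i} \cdot (1 \otimes D) = 1 \otimes \partial_{x_i} D$ in $D_{\bdyM \to M}$ (a consequence of \eqref{eq: pullbackconnection} applied with $\widehat{\partial_{x_i}} = \partial_{x_i}$), which lets one push any tangential derivative across the tensor product into the form factor at the cost of a Lie derivative, plus the absorption of $\cinfty_\bdyM$-factors into the operator. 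Given an element in this normal form lying in $\ker\phi$, I would then test against $g(x,t) = t^j \chi(x) \rho(t)$, where $\chi$ is a bump function on $\bdyM$ and $\rho$ a bump function on $[0,\epsilon)$ equal to $1$ near $0$. The identity $\partial_t^k(t^j \chi(x))\mid_{t=0} = j!\, \delta_{jk}\, \chi(x)$ then extracts $j! \int_\bdyM \chi\, \omega_j = 0$ for all $\chi$, forcing $\omega_j = 0$ for each $j$.
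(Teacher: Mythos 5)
Your proposal is correct and follows essentially the same route as the paper: the same explicit map $\omega\otimes f\otimes D\mapsto\bigl(g\mapsto\int_{\bdyM}f\,(Dg)\mid_{\bdyM}\,\omega\bigr)$, the same well-definedness check via the extension $\widehat X$ and Stokes on the closed manifold $\bdyM$, and the same local normal form $\sum_k\omega_k\otimes 1\otimes\partial_t^k$ coming from $D_{\bdyM\to M}\cong D_{\bdyM}[\partial_t]$. The only difference is that you make injectivity fully explicit by testing against $t^j\chi(x)\rho(t)$, a step the paper declares clear from the normal form.
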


\begin{remark}
The notation $F^0\delsmoothdistr$ is designed to suggest that this submodule is the zeroth component of a filtration on $\delsmoothdistr$. We will see later that this is indeed the case.
\end{remark}

\begin{proof}
Let us first construct a map 
\begin{equation}
\Upsilon: F^0 \delsmoothdistr\to \delsmoothdistr.
\end{equation}
Let $U\subset M$, with $V=U\cap \bdyM$. Let $\nu\in \Omega^{n-1}_{\bdyM,tw}(V)$, $f\in \cinfty_{\bdyM}(V)$, $g\in \cinfty_{c,M}(U)$, and let $D$ be the germ of a differential operator on a neighborhood of $V$ in $M$. Define
\begin{equation}
\Upsilon(\nu\otimes f\otimes D)(g) =\int_\bdyM (Dg)\mid_{\bdyM}f\nu.
\end{equation}
The map $\Upsilon$ is manifestly a sheaf map, assuming that it is well-defined.
Let us show that it is well-defined, i.e. that the relations imposed by the tensor products in $F^0\delsmoothdistr$ go to zero in $\delsmoothdistr$.
To this end, suppose that $h$ is a germ of a smooth function in a neighborhood of $V$ in $M$. Then, it is straightforward to verify that
\begin{equation}
\Upsilon( \nu\otimes f(h\mid_{\bdyM})\otimes D- \nu\otimes f\otimes hD)(g) = 0.
\end{equation}
It is likewise easy to verify that $\Upsilon$ is linear over $\cinfty_\bdyM\subset D_{\bdyM}$ and respects the right $D_M$-module structures.
Finally, let $X\in Vect(\bdyM)$ and choose some extension $\widehat X\in Vect(M)$ as in the discussion preceding Equation \eqref{eq: pullbackconnection}.
Then,
\begin{align}
\Upsilon(\nu\cdot X\otimes &f\otimes D -\nu \otimes X\cdot f\otimes D-\nu\otimes f\otimes \widehat X D)(g)
\nonumber\\&=
-\int_\bdyM (Dg)\mid_\bdyM f L_X\nu-\int_\bdyM (Dg)\mid_{\bdyM} L_X f\nu - \int_\bdyM (\widehat X Dg)\mid_{\bdyM} f\nu.
\end{align}
One checks that $(\widehat X Dg)\mid_{\bdyM} = X\cdot (Dg\mid_{\bdyM})$, so that we have
\begin{align}
\Upsilon(\nu\cdot X\otimes &f\otimes D -\nu \otimes X\cdot f\otimes D-\nu\otimes f\otimes \widehat X D)(g)
\nonumber\\&= -\int_\bdyM L_X\left( (Dg)\mid_{\bdyM} f\nu\right) = 0.
\end{align}
We have therefore established that $\Upsilon$ is a well-defined sheaf map.

It remains to show that $\Upsilon$ is a monomorphism, and that the image of $F^0\delsmoothdistr$ under $\Upsilon$ is the smallest submodule of $D_M$ containing the densities on $\bdyM$.
Choosing a tubular neighborhood (and a normal coordinate $t$), one obtains an isomorphism $D_{\bdyM \to M}\cong D_\bdyM[\partial_t]$ (of sheaves of $\cinfty_{\bdyM}$ modules on $\bdyM$).
Here, by $D_\bdyM[\partial_t]$, we mean the sheaf $D_\bdyM\otimes_\RR{\RR[\partial_t]}$.
One therefore finds that 
\begin{equation}
\label{eq: noncanonicaliso1}
F^0\delsmoothdistr \cong \iota_*\left( \Omega^{n-1}_{\bdyM, tw}[\partial_t]\right).
\end{equation}
This is an isomorphism of $\cinfty_M$-modules (in any case, we have not even described a $D_M$-module structure on the right-hand side of the equation).
The composite of this isomorphism with $\Upsilon$ takes $\nu \partial_t^n$ to the distribution (on $M$)
\begin{equation}
\label{eq: codim1dens}
g\mapsto \int_\bdyM (\partial_t^n g)\mid_{t=0}\nu. 
\end{equation}
From this characterization of $\Upsilon$, it is clear that $\Upsilon$ is a monomorphism.

Finally, we verify that $F^0\delsmoothdistr$ is the smallest sub-$(D_M)$-module of $\cD_M$ containing the densities on $\bdyM$.
Here, again, the isomorphism of Equation \ref{eq: noncanonicaliso1} is useful. Any sub $D_M$-module of $\cD_M$ which contains distributions of the form
\begin{equation}
g\mapsto \int_{\bdyM} g\mid_{t=0}\nu
\end{equation}
must also contain distributions of the form 
\begin{equation}
g\mapsto \int_{\bdyM}(\partial^n_tg)\mid_{t=0}\nu,
\end{equation}
by $D_M$-closedness.
But, Equation \ref{eq: noncanonicaliso1} tells us that $F^0\delsmoothdistr$ is precisely the space of all such distributions.

We have therefore seen that any sub-$D_M$-module of $\cD_M$ which contains $\Omega^{n-1}_{tw, \bdyM}$ also contains $F^0\delsmoothdistr$; since $F^0\delsmoothdistr$ is a sub-$D_M$-module of $\cD_M$ (via $\Upsilon$), this completes the proof.
\end{proof}

As in Notation \ref{not: lclfcnls}, let $\sO_{loc}(\sE)$ denote the space of functions on $\sE_c$ which are of the same form as those which define $\Oloc$ (the only difference being we do not identify two such if they agree on $\condfieldscs$). 

\begin{lemma}
There is a natural map 
\begin{equation}
 \digamma: \delsmoothdistr\otimes_{D_M}\sO(\sJ(\sE))\to \sO_{loc}(\sE)
\end{equation}
of $\cinfty_M$-modules.
\end{lemma}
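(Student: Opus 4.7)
The plan is to define $\digamma$ on generating tensors $\omega\otimes P$, check that the resulting functional is indeed local, and then verify that $\digamma$ descends through the tensor product over $D_M$ and is $\cinfty_M$-linear.

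First I would set up the evaluation map. Any element $P\in\Sym^k_{\cinfty_M}\sJ(\sE)^\vee$ is locally a symmetrized product $D_1\cdots D_k$ of differential operators $D_i:\sE\to \cinfty_M$; let $\eval_P:\sE\to\cinfty_M$ be the smooth function $\eval_P(\varphi)=D_1\varphi\cdots D_k\varphi$, and extend $\cinfty_M$-linearly and by taking products to all of $\sO(\sJ(\sE))$. For $\omega\in\delsmoothdistr$, viewed tautologically as a distribution on $M$, and any $P\in\sO(\sJ(\sE))$, set
\begin{equation}
\digamma(\omega\otimes P)(\varphi) \;:=\; \omega\bigl(\eval_P(\varphi)\bigr).
\end{equation}

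Next I would verify that the image lies in $\sO_{loc}(\sE)$. When $\omega\in\Omega^n_{M,tw}$, this is immediate from Definition \ref{def: localfcnls}. In general, since $\delsmoothdistr$ is by definition the smallest right sub-$D_M$-module of $\cD_M$ containing the smooth densities, any element is a finite sum of terms $\omega_i\cdot D_i$ with $\omega_i$ smooth and $D_i\in D_M$. One then has $(\omega_i\cdot D_i)(\eval_P(\varphi))=\omega_i(D_i\cdot\eval_P(\varphi))$, and $D_i\cdot\eval_P(\varphi)$ is again a polynomial expression in derivatives of $\varphi$ (by the Leibniz rule on $\eval_P$), so of the form \eqref{eq: localfcnls}. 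The boundary-type contributions produced along the way, as in Equation \eqref{eq: dmdens}, are also local: using a compactly supported cutoff $f_0$ on $[0,\epsilon)$ with $f_0(0)=1$, one rewrites boundary integrals as in the paragraph following Notation \ref{not: lclfcnls} as integrals over $M$ of total $t$-derivatives.

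Third, I would check descent through $\otimes_{D_M}$. The left $D_M$-module structure on $\sO(\sJ(\sE))$ comes from the canonical flat connection on $J(\sE)$ and is characterized exactly by the identity $\eval_{X\cdot P}(\varphi)=X\bigl(\eval_P(\varphi)\bigr)$ for every vector field $X$ and every $\varphi\in\sE$. Meanwhile the right $D_M$-action on distributions is $(\omega\cdot X)(g)=\omega(X g)$. Combining these,
\begin{equation}
(\omega\cdot X)\bigl(\eval_P(\varphi)\bigr) \;=\; \omega\bigl(X\cdot \eval_P(\varphi)\bigr) \;=\; \omega\bigl(\eval_{X\cdot P}(\varphi)\bigr),
\end{equation}
so $\digamma$ factors through the relation $(\omega\cdot X)\otimes P \sim \omega\otimes(X\cdot P)$, and hence through the full $D_M$-tensor product. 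Since $\cinfty_M\subset D_M$, the $\cinfty_M$-linearity of $\digamma$ and its compatibility with the sheaf structure on both sides are then automatic.

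I expect the main obstacle to be the second step. One has to be careful that evaluating elements of $\delsmoothdistr$ that are not smooth densities (e.g.\ the boundary distributions $g\mapsto \int_\bdyM(\partial_t^n g)\mid_{t=0}\nu$ produced by Lemma \ref{lem: bdydens}) against $\eval_P(\varphi)$ still yields a functional in $\sO_{loc}(\sE)$ in the precise sense of Definition \ref{def: localfcnls}. This is exactly what the Leibniz reduction to smooth densities and the $f_0$-cutoff trick handle; once that is in place, the remaining verifications are formal manipulations of the two $D_M$-actions.
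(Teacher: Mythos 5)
Your proposal is correct, and its overall architecture (define $\digamma$ by evaluating $\omega$ as a distribution on $\eval_P(\varphi)$, verify locality, verify descent through $\otimes_{D_M}$) is the same as the paper's. The one place where you genuinely diverge is the locality check, which is the substantive step. The paper invokes its explicit description of $\delsmoothdistr$ (via Lemma \ref{lem: bdydens}) to decompose $\omega$ into a smooth density on $M$ plus boundary distributions of the form $g\mapsto \int_\bdyM (\partial_t^n g)\mid_{t=0}\nu$, and then shows the resulting boundary functionals $I_{\nu,n}$ are local by rewriting them with the cutoff $f_0$ as integrals of total $t$-derivatives over $M$. You instead use only the defining property of $\delsmoothdistr$ as the smallest sub-$D_M$-module of $\cD_M$ containing $\Omega^n_{M,tw}$, so that every element is a finite sum $\sum_i \omega_i\cdot D_i$ with $\omega_i$ smooth, and then observe that $\omega_i(D_i\cdot \eval_P(\varphi))$ expands by the Leibniz rule into manifestly local integrals over $M$. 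Your argument is more economical: it needs neither the tubular neighborhood nor Lemma \ref{lem: bdydens}, and the secondary remark about boundary contributions and the $f_0$-trick is actually redundant once the Leibniz reduction is in place (no boundary terms arise unless one integrates by parts, which you never do). The paper's route buys something else, though: the explicit $I_{\nu,n}$ versus $J_{\nu,n}$ identification is reused immediately afterwards in the proof that $\digamma$ is an isomorphism (Lemma \ref{lem: dmoddescription}), where the filtration by boundary-supported functionals is essential. Your explicit verification of the descent relation $(\omega\cdot X)\otimes P\sim \omega\otimes (X\cdot P)$ via $\eval_{X\cdot P}(\varphi)=X(\eval_P(\varphi))$ is also a welcome expansion of a step the paper only cites from the boundaryless case.
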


\begin{proof}
The map is constructed just as in the case $\bdyM=\emptyset$.
More precisely, one can identify $\sO(\sJ(\sE))$ with the space of polydifferential operators on $E$ with values in smooth functions on $M$.
In other words, $\sO(\sJ(\sE))$ consists of spaces of operators which take in some number of sections of $E$ and produce a function on $M$.
We write such an object as $D_1\cdot D_2\cdot \cdots \cdot D_k$.
We define
\begin{equation}
\digamma (\omega \otimes D_1\cdot \cdots \cdot D_k) = \omega\left( D_1\phi \cdots D_k\phi\right).
\end{equation}
Here, we are viewing $\omega$ as a distribution on $M$.
(The space $\delsmoothdistr$ consists of distributions of a particular form.)
The map $\digamma$ respects the $D_M$-module tensor product for the same reason that it did in the case $\bdyM=\emptyset$.
There, it was important to use the right $D_M$-module structure on $\Omega^n_{tw, M}$ which is induced from the embedding $\Omega^n_{tw,M}\hookrightarrow \cD_M$.
By defining the $\delsmoothdistr$ as we have---namely as a subspace of the distributions on $M$---we have guaranteed that the same argument applies here.
It remains only to check that when $\omega\in \delsmoothdistr$, 
\begin{equation}
\omega\left( D_1\phi \cdots D_k\phi\right)
\end{equation}
is a local action functional.
To this end, our description of $\delsmoothdistr$ in the preceding lemma will be useful.
The distribution $\omega$ contains summands of two forms. Summands of the first type are associated to a smooth density $\mu$ on $M$ and manifestly produce local action functionals.
Summands of the second type are associated to a smooth density $\nu$ on $\bdyM$ and a natural number $n$ and are of the form in Equation \ref{eq: codim1dens}.
Such summands produce functionals of the form
\begin{equation}
I_{\nu, n}(\phi)=\int_\bdyM \nu \partial_t^n\mid_{t=0} \left( D_1\phi\cdots D_k\phi\right).
\end{equation}
Note that such a description requires a choice of tubular neighborhood $\tubnhd\cong \bdyM\times [0,\epsilon)$ of $\bdyM$ in $M$.
We continue to use this choice as follows. 
Let $f_0$ be a compactly-supported function on $[0,\epsilon)$ which is 1 at $t=0$.
Together with $\nu$, such a function determines a density on $M$ which we will call $(f_0 dt)\wedge\nu$.
It is straightforward to check that the functional
\begin{align}
J_{\nu, n}:&=\int_M \frac{\del}{\del t}\left( f_0 dt \wedge \nu \left(\frac{\del}{\del t}\right)^n\left(D_1\phi\cdots D_k \phi\right) \right)\\
&= \int_M \frac{df_0}{dt}\wedge \nu \left( \frac{\del}{\del t}\right)^n \left(D_1\phi \cdots D_k \phi\right)\nonumber\\
&+\int_M f_0 dt \wedge \nu \left(\frac{\del}{\del t}\right)^{n+1}\left( D_1\phi \cdots D_k\phi\right)
\end{align}
coincides with $I_{\nu, n}$.
Since $J_{\nu, n}$ is manifestly local in our definition, the lemma follows.
\end{proof}

\begin{lemma}
\label{lem: dmoddescription}
The natural map 
\begin{equation}
 \digamma: \delsmoothdistr\otimes_{D_M}\sO(\sJ(\sE))\to \sO_{loc}(\sE)
\end{equation}
is an isomorphism of sheaves on $M$.
\end{lemma}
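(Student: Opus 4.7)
The strategy is to reduce to Costello's already-established isomorphism in the case $\bdyM = \emptyset$ (Lemma 6.6.1 of \cite{cost}) by a filtration argument that separates bulk densities from boundary distributions. First I would equip $\delsmoothdistr$ with the filtration $F^0 \subset F^1 = \delsmoothdistr$ where $F^0 := \dpush \Omega^{n-1}_{\bdyM, tw}$, identified in Lemma \ref{lem: bdydens} as the right $D_M$-submodule of $\cD_M$ generated by $\iota_*\Omega^{n-1}_{\bdyM, tw}$. The key observation is that, modulo $F^0$, Equation \eqref{eq: dmdens} reduces to the Lie-derivative action, so one obtains a short exact sequence of right $D_M$-modules
\begin{equation}
0 \to \dpush \Omega^{n-1}_{\bdyM, tw} \to \delsmoothdistr \to \Omega^n_{M, tw} \to 0,
\end{equation}
where the rightmost term carries its Lie-derivative $D_M$-module structure. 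On the other side, I would use the filtration on $\sO_{loc}(\sE)$ already introduced in the paper, with $F^0 \sO_{loc}(\sE)$ the subspace of boundary functionals.

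A direct check shows that $\digamma$ respects these filtrations, so it suffices to prove that the induced maps on the associated graded pieces are isomorphisms. On the top quotient, tensoring the short exact sequence above over $D_M$ with $\sO(\sJ(\sE))$ (using that the latter is a product of locally free $\cinfty_M$-modules so the sequence remains exact after tensoring) recovers exactly Costello's original isomorphism $\Omega^n_{M, tw} \otimes_{D_M} \sO(\sJ(\sE)) \cong \sO_{loc}(\sE)/F^0 \sO_{loc}(\sE)$ applied formally to the interior theory.

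For the $F^0$-piece, I would apply Lemma \ref{lemma: pullpush} to rewrite
\begin{equation}
\dpush \Omega^{n-1}_{\bdyM, tw} \otimes_{D_M} \sO(\sJ(\sE)) \cong \iota_*\bigl( \Omega^{n-1}_{\bdyM, tw} \otimes_{D_\bdyM} \dpull \sO(\sJ(\sE)) \bigr).
\end{equation}
The crucial identification is that $\dpull \sO(\sJ(\sE))$ is naturally the sheaf on $\bdyM$ of formal power series functionals on the bundle of jets of sections of $E$ in a neighborhood of $\bdyM$, \emph{including arbitrary normal derivatives}. Boundary local functionals of the form $\int_\bdyM \nu\,\partial_t^n(D_1\phi \cdots D_k\phi)$ are precisely captured by this description, and Costello's Lemma 6.6.1 applied to the manifold without boundary $\bdyM$ with this enlarged jet sheaf then yields an isomorphism onto $F^0 \sO_{loc}(\sE)$. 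A five-lemma argument on the two short exact sequences finishes the proof.

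The main obstacle will be the identification of $\dpull \sO(\sJ(\sE))$ with functionals on the appropriate jet bundle over $\bdyM$: one must verify carefully that the push-pull construction of Section 3.3 interacts with the jet construction exactly so that the normal-derivative jets are retained, and then check that Costello's lemma applies to this (infinite-rank, but filtered by finite-rank) jet bundle on $\bdyM$. Everything else is essentially bookkeeping once this identification is in hand.
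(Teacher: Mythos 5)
Your proposal follows essentially the same route as the paper's own proof: the same two-step filtration of $\delsmoothdistr$ by $F^0 = \dpush\Omega^{n-1}_{\bdyM,tw}$ and of $\sO_{loc}(\sE)$ by the boundary functionals, reduction to the associated graded pieces, Costello's boundaryless isomorphism on the top quotient, and Lemma \ref{lemma: pullpush} together with the identification of $\dpull\sO(\sJ(\sE))$ as functionals on jets retaining normal derivatives on the $F^0$ piece. Your phrasing via a short exact sequence and the five lemma is only cosmetically different from the paper's associated-graded language, and your flatness remark addresses the same (lightly treated) exactness point the paper also elides.
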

\begin{proof}
Both the domain and codomain of $\digamma$ are filtered as follows. $F^0\sO_{loc}(\sE)$ is the space of all action functionals which may be written as integrals over $\bdyM$, and $F^1\sO_{loc}(\sE):=\sO_{loc}(\sE)$.
Similarly, we set 
\begin{equation}
F^0\left(\delsmoothdistr\otimes_{D_M}\sO(\sJ(\sE))\right) = (F^0\delsmoothdistr)\otimes_{D_M}\sO(\sJ(\sE)),
\end{equation}
and 
\begin{equation}
F^1\left(\delsmoothdistr\otimes_{D_M}\sO(\sJ(\sE))\right)=\delsmoothdistr\otimes_{D_M}\sO(\sJ(\sE)).
\end{equation}
The map $\digamma$ manifestly preserves these filtrations.
The associated graded object for $\sO_{loc}(\sE)$ is the direct sum 
\begin{equation}
\textrm{LMB}\oplus F^0\sO_{loc}(\sE),
\end{equation}
where $\textrm{LMB}$ is the space of local functionals modulo those which arise as integrals over the boundary.
On the other hand, the associated graded object for the domain of $\digamma$ is 
\[(\Omega^n_{M,tw}\oplus F^0\Omega^{n,n-1}_{M,tw})\otimes_{D_M} \sO(\sJ(\sE)),\] 
where $\Omega^{n}_{M,tw}$ has the right $D_M$-module structure given by Lie derivative along vector fields.
The map $\digamma$ is an isomorphism on the $F^1/F^0$ components by the same argument as for the case $\bdyM=\emptyset$.
To complete the proof, therefore, we need to argue that the $\digamma$ is an isomorphism on the $F^0$ pieces.
To this end, we use Lemma \ref{lemma: pullpush} to find:
\begin{align}
F^0 \Omega^{n,n-1}_{\bdyM,tw}\otimes_{D_M}\sO(\sJ(\sE))&= \dpush \Omega^{n-1}_{\bdyM, tw} \otimes_{D_M} \sO(\sJ(\sE))\nonumber
\\&\cong \iota_*\left( \Omega^{n-1}_{\bdyM,tw}\otimes_{D_{\bdyM}} \dpull \sO(\sJ(\sE))\right).
\end{align}
And now, we proceed as in the case $\bdyM=\emptyset$; the $D_\bdyM$-module tensor product now tells us that total derivatives on $\bdyM$ are zero.
The only difference here is that we use $\dpull \sO(\sJ(\sE))$ instead of $\sO(\sJ(\sF))$ for some vector bundle $F\to \bdyM$.
However, this precisely matches the fact that $F^0\sO_{loc}(\sE)$ contains functionals which depend on the normal derivatives of fields at the boundary.
\end{proof}

In fact, we also have a corresponding statement for the \emph{derived} $D_M$ module tensor product:

\begin{lemma}
\label{lem: derivedisunderived}
Let $\sO_{red}(\sJ(\sE)):=\sO_{red}(\sJ(\sE))/\cinfty_M$; then $\sO_{red}$ is a flat left $D_M$-module and hence 
\begin{equation}
\delsmoothdistr\otimes_{D_M}\sO_{red}(\sJ(\sE))\simeq  \Omega_{M,tw}^{n,n-1}\otimes_{D_M}^\LL\sO_{red}(\sJ(\sE))
\end{equation}
and there is a quasi-isomorphism
\begin{equation}
\delsmoothdistr\otimes_{D_M}^\LL\sO(\sJ(\sE))\simeq \sO_{loc,red}(\sE)
\end{equation}
of sheaves (of $\cinfty_M$ modules).
\end{lemma}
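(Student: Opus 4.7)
The plan is to parallel Costello's proof of the analogous statement when $\bdyM=\emptyset$ (\cite{cost}, Chapter 5, Lemma 6.6.4), substituting $\delsmoothdistr$ for $\Omega^n_{M,tw}$ throughout. The argument splits into two main stages: establishing flatness of $\sO_{red}(\sJ(\sE))$ as a left $D_M$-module, and then combining this with Lemma \ref{lem: dmoddescription}.

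First I would prove that $\sO_{red}(\sJ(\sE))$ is a flat left $D_M$-module. This is a purely local statement on $M$ and is independent of whether $\bdyM$ is empty, so Costello's argument carries over without modification. Working in local coordinates on $M$ together with a local trivialization of $E$, one identifies $\sJ(\sE)^\vee$ as a free left $D_M$-module on the duals of the $0$-jets of the frame. The symmetric powers $\Sym^k_{\cinfty_M}\sJ(\sE)^\vee$ are direct summands of tensor powers of a free $D_M$-module, hence flat. To pass from the individual $\Sym^k$ to the product $\prod_{k>0}\Sym^k_{\cinfty_M}\sJ(\sE)^\vee$, one invokes the boundedness of $E$ stipulated in Definition \ref{def: tnbft}: in each fixed cohomological degree, only finitely many $k$ contribute, so the infinite product is, degree-wise, a finite direct sum of flat $D_M$-modules and hence flat.

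Second, flatness immediately gives the first quasi-isomorphism,
\[
\delsmoothdistr\otimes_{D_M}^\LL\sO_{red}(\sJ(\sE))\simeq \delsmoothdistr\otimes_{D_M}\sO_{red}(\sJ(\sE)).
\]
For the second assertion, I would invoke Lemma \ref{lem: dmoddescription}, which provides the isomorphism $\delsmoothdistr\otimes_{D_M}\sO(\sJ(\sE))\cong \sO_{loc}(\sE)$. Using the direct-sum decomposition $\sO(\sJ(\sE))=\cinfty_M\oplus \sO_{red}(\sJ(\sE))$ as $D_M$-modules, which on the functional side corresponds to the split between constant and non-constant functionals, one passes to the reduced quotients on both sides and combines this with the flatness result to conclude the quasi-isomorphism onto $\sO_{loc,red}(\sE)$.

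The main obstacle I anticipate is the flatness of the infinite product $\prod_{k>0}\Sym^k_{\cinfty_M}\sJ(\sE)^\vee$: infinite products of flat modules are not flat in general, so one must lean on the degree-wise finite-dimensionality of $E$ to reduce to finite sums in each cohomological degree. A related subtlety is that $\sO_{red}(\sJ(\sE))$ is defined via the bornological completed tensor product from the paper's functional-analytic conventions, so one must verify that the algebraic notion of $D_M$-flatness interacts correctly with these completions; as in Costello's proof, the verification is carried out degree by degree in the $\Z$-grading, where everything reduces to a finite-dimensional computation.
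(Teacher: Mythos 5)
Your overall strategy is the same as the paper's: the paper's entire proof is the single sentence that the argument is ``exactly the same as the proof of Lemma 6.6.2 of \cite{cost}'', and your proposal is a reasonable unpacking of that argument (local freeness of $\sJ(\sE)^\vee$ over $D_M$, flatness of each $\Sym^k_{\cinfty_M}\sJ(\sE)^\vee$ as a direct summand of a tensor power of an induced module, then the splitting $\sO(\sJ(\sE))=\cinfty_M\oplus\sO_{red}(\sJ(\sE))$ combined with Lemma \ref{lem: dmoddescription}). However, the step you yourself identify as the crux is handled incorrectly. You claim that, by the boundedness of $E$, ``in each fixed cohomological degree, only finitely many $k$ contribute'' to $\prod_{k>0}\Sym^k_{\cinfty_M}\sJ(\sE)^\vee$. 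This is false for essentially every theory the paper considers: the degree $-1$ fiberwise non-degenerate pairing forces the graded components of $E$ to occur in pairs of degrees $j$ and $1-j$, so $\sJ(\sE)^\vee$ has components in degrees of both signs (and in degree $0$ for all the examples in Section \ref{sec: classexamples}), and consequently \emph{every} $\Sym^k$ contributes to a fixed cohomological degree such as $0$. The product is genuinely infinite in each degree, and an infinite product of flat modules over a non-coherent ring (and $D_M$, built on $\cinfty_M$, is not coherent) need not be flat, so your reduction to a finite direct sum does not go through.

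The repair is that one does not need flatness of the full product against arbitrary modules; one only needs the vanishing of $\op{Tor}^{D_M}_{>0}\bigl(\delsmoothdistr,\sO_{red}(\sJ(\sE))\bigr)$, and this survives the infinite product. Indeed, $\delsmoothdistr$ admits the finite resolution $\dmdrrel$ by flat right $D_M$-modules (Lemmas \ref{lem: drres} and \ref{lmm: dmdrrelflat}), each term of which is locally of the form $\Omega^j_{M,tw}\otimes_{\cinfty_M}D_M$ or its boundary analogue. Tensoring such a term over $D_M$ with a left module $N$ yields $\Omega^j_{M,tw}\otimes_{\cinfty_M}N$, which commutes with the product over $k$ because $\Omega^j_{M,tw}$ is finite locally free over $\cinfty_M$, and cohomology commutes with products in the abelian category at hand. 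Hence the higher $\op{Tor}$ of $\delsmoothdistr$ against $\prod_{k}\Sym^k_{\cinfty_M}\sJ(\sE)^\vee$ is the product of the higher $\op{Tor}$'s against the individual $\Sym^k_{\cinfty_M}\sJ(\sE)^\vee$, each of which vanishes by the flatness you did establish correctly (via the standard untwisting isomorphism identifying the diagonal $D_M$-action on a tensor power with the action on a single factor). With this substitution, the remainder of your argument---passing to reduced quotients and invoking Lemma \ref{lem: dmoddescription}---is fine.
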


\begin{proof}
The proof is exactly the same as the argument used in the proof of Lemma 6.6.2 of \cite{cost}.
\end{proof}

Since Lemma \ref{lem: derivedisunderived} holds, we can replace $\delsmoothdistr$ with a quasi-isomorphic $D_M$ module, and we would have a quasi-isomorphic model of $\sO_{loc,red}(\sE)$. Hence, we seek a resolution of $\delsmoothdistr$ as a right $D_M$-module.
This is what we proceed to do now.
As before, the left $D_M$-module structure on $D_M$ provides a differential on the graded right $D_M$-module
\begin{equation}
\dmdr:=\Omega^\bullet_{M,tw}\otimes_{\cinfty_M} D_M;
\end{equation}
a similar argument endows the graded right $D_M$-module
\begin{equation}
\dmdrbdy:=\iota_*\left(\Omega^\bullet_\bdyM\otimes_{\cinfty_\bdyM}D_{\bdyM\to M}\right)
\end{equation}
with a differential, and a natural right $D_M$-module structure via the map $D_M\to \iota_*\iota^{-1} D_M$.
There is a map
\begin{equation}
\label{eq: pullbackdeRham}
\iota^*: \dmdr \to \dmdrbdy
\end{equation}
defined using the pullback of forms and the unit map $D_M \to \iota_*\iota^{-1}D_M$.

\begin{definition}
The \textbf{universal de Rham resolution of }$\delsmoothdistr$ is the shifted mapping cone
\begin{equation}
\dmdrrel := \cone(\iota^{*})[n-1]
\end{equation}
of the map $\iota^*$.
\end{definition}

The following lemma justifies our introduction of $\dmdrrel$.

\begin{lemma}
\label{lem: drres}
There is a natural quasi-isomorphism
\begin{equation}
\Phi:\dmdrrel \to \delsmoothdistr
\end{equation}
of right $D_M$-modules.
\end{lemma}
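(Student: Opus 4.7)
The plan is to construct $\Phi$ explicitly in degree $0$ (where the only nontrivial image lives), verify that it is a chain map using a local Stokes'-theorem computation, and deduce the quasi-isomorphism claim from a five-lemma argument comparing short exact sequences of cones.

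For the construction, since $\delsmoothdistr$ sits in degree $0$, it suffices to define $\Phi$ in degree $0$, where
\[
\dmdrrel^{\,0}=\iota_*\bigl(\Omega^{n-1}_\bdyM\otimes_{\cinfty_\bdyM} D_{\bdyM\to M}\bigr)\oplus \bigl(\Omega^n_{M,tw}\otimes_{\cinfty_M} D_M\bigr).
\]
On the first summand I take $\Phi$ to be the map $\Upsilon$ built in the proof of Lemma~\ref{lem: bdydens}, valued in $F^0\delsmoothdistr$; on the second, I set $\Phi(\omega\otimes D):= T_\omega\cdot D$, where $T_\omega$ is the distribution represented by the density $\omega$ and $\cdot$ is the right $D_M$-action on $\cD_M$. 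Both assignments are right $D_M$-linear and land in $\delsmoothdistr\subset\cD_M$.

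To verify the chain-map condition $\Phi\circ d_{\dmdrrel}=0$ on $\dmdrrel^{\,-1}$, I split the differential into its two components. On the summand $\iota_*(\Omega^{n-2}_\bdyM\otimes D_{\bdyM\to M})$ of $\dmdrrel^{\,-1}$, the differential is $d_{\dmdrbdy}$, and $\Upsilon$ annihilates its image because $\Upsilon$ factors through $\iota_*(\Omega^{n-1}_\bdyM\otimes_{D_\bdyM} D_{\bdyM\to M})$, in which total derivatives on $\bdyM$ are killed (as in the proof of Lemma~\ref{lem: bdydens}). On the summand $\Omega^{n-1}_{M,tw}\otimes D_M$, I must show $\Phi(d_{\dmdr}\alpha)\pm\Phi(\iota^*\alpha)=0$, which by right $D_M$-linearity reduces to $\alpha=\omega\otimes 1$. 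In local coordinates near the boundary, $d_{\dmdr}(\omega\otimes 1)=d\omega\otimes 1+\sum_i dx^i\wedge\omega\otimes \partial_i$; Stokes' theorem combined with Equation~\eqref{eq: dmdens} then identifies $\Phi(d_{\dmdr}(\omega\otimes 1))$ with the boundary integral $\Upsilon(\iota^*(\omega\otimes 1))$ up to the expected sign.

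For the quasi-isomorphism claim, the cone fits in a canonical short exact sequence of right $D_M$-modules
\[
0\to \dmdrbdy[n-1]\to \dmdrrel\to \dmdr[n]\to 0,
\]
and the target fits in
\[
0\to F^0\delsmoothdistr\to \delsmoothdistr\to \Omega^n_{M,tw}\to 0,
\]
where the quotient carries the right $D_M$-structure given by Lie derivatives (the boundary corrections of Equation~\eqref{eq: dmdens} being absorbed into $F^0\delsmoothdistr$). By construction $\Phi$ is a morphism of these short exact sequences. Its restriction $\dmdrbdy[n-1]\to F^0\delsmoothdistr\cong \dpush\Omega^{n-1}_{\bdyM,tw}$ is a quasi-isomorphism by the Koszul-resolution argument applied on $\bdyM$, and its induced map $\dmdr[n]\to \Omega^n_{M,tw}$ is the quasi-isomorphism from the case $\bdyM=\emptyset$ recalled in Section~\ref{sec: localfcnlsnobdy}. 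The five-lemma on the associated long exact sequences in cohomology then forces $\Phi$ to be a quasi-isomorphism. The main obstacle will be the Stokes-theorem verification in the chain-map step: one must pin down, with consistent signs, how the Koszul terms in $d_{\dmdr}$ combine with the boundary corrections of Equation~\eqref{eq: dmdens} to exactly cancel the pullback term $\iota^*\alpha$; once this identity is in hand, the rest of the proof is formal.
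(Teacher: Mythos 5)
Your construction of $\Phi$ and the chain-map verification coincide with the paper's (the paper's $\Phi(\alpha\otimes D_1,\beta\otimes D_2)=\alpha\cdot D_1+\beta\cdot D_2$ is exactly your $T_\omega\cdot D$ plus $\Upsilon$, and the paper likewise reduces the chain-map identity to $D_1=D_2=1$ and integration by parts on $M$ and $\bdyM$), and your two short exact sequences plus the five lemma are just the paper's two-step filtration and associated-graded argument in different packaging. The one place you are too quick is the claim that $\dmdrbdy[n-1]\to F^0\delsmoothdistr$ is a quasi-isomorphism ``by the Koszul-resolution argument applied on $\bdyM$'': the complex $\dmdrbdy$ is built from $D_{\bdyM\to M}$, not $D_{\bdyM}$, so the bare Koszul argument only shows that this complex computes $\Omega^{n-1}_{\bdyM,tw}\otimes^{\mathbb L}_{D_{\bdyM}}D_{\bdyM\to M}$, and one still must rule out higher Tor terms. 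The paper does this by a further filtration by the order of the normal differential operator, identifying the associated graded of $D_{\bdyM\to M}$ with $D_{\bdyM}\otimes_{\RR}\RR[\partial_t]$ (so that $\mathrm{Gr}(\Phi_\partial)$ is a base change over $\RR[\partial_t]$ of the codimension-one augmentation); in your language this amounts to verifying that $D_{\bdyM\to M}$ is flat as a left $D_{\bdyM}$-module. With that point supplied, your argument is complete and matches the paper's.
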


\begin{proof}
Given $\alpha \in \Omega^n_{tw,M}$, $\beta\in \Omega^{n-1}_{tw,\bdyM}$, $D_1,D_2\in D_M$, we let
\begin{equation}
\Phi(\alpha\otimes D_1, \beta\otimes D_2) = \alpha\cdot D_1 + \beta \cdot D_2;
\end{equation}
on the right-hand side, we understand $\alpha$ and $\beta$ as elements of $\delsmoothdistr$ and $\cdot$ denotes the right $D_M$-module action in this space. This is manifestly a map of right $D_M$-modules, and $\Phi$ respects the tensor product relations defining $\dmdr$ and $\dmdrbdy$. 
Let us check that $\Phi$ respects the differentials. Namely, we are required to check that if $\mu\in \Omega^{n-1}_{M,tw}, \nu\in \Omega^{n-2}_{M,tw}$, then 
\begin{equation}
\Phi(d\mu \otimes D_1+\mu \otimes \nabla D_1,-\iota^*\mu \otimes D_1+d\nu\otimes D_2+ \nu \otimes \nabla D_2) = 0.
\end{equation}
Because the differential and $\Phi$ are right $D_M$-linear (the connection $\nabla$ on $D_M$ is defined in terms of \emph{left} multiplication in $D_M$), it suffices to check this for $D_1=D_2=1$. Then, one verifies that the required equality is equivalent to integration by parts formulas for $M$ and $\bdyM$. 

It remains to check that $\Phi$ is a quasi-isomorphism. Note that, as before, we obtain a natural filtration on $\dmdrrel$ and $\Phi$ preserves this filtration. On the level of associated graded sheaves, $Gr(\Phi)$ is the sum of the map
\begin{equation} 
\dmdr [n]\xrightarrow{\varepsilon} \Omega^{n}_{M,tw}
\end{equation} 
(where $\Omega^{n}_{M,tw}$ is endowed with the $D_M$-module structure by Lie derivatives of vector fields) and the map
\begin{equation}
\dmdrbdy\xrightarrow{\varepsilon_\partial} F^0 \delsmoothdistr.
\end{equation}
The map $\varepsilon$ is precisely the quasi-isomorphism constructed in the paragraph preceding Lemma \ref{lem: derhamloc}.
For the story on $\bdyM$, let us recall that 
\begin{equation}
F^0 \delsmoothdistr:=\iota_*\left(\Omega^{n-1}_{\bdyM,tw}\otimes_{D_{\bdyM}}\left( \cinfty_{\bdyM}\otimes_{\iota^{-1}(\cinfty_M)}\iota^{-1}(D_M)\right)\right)
\end{equation}
and 
\begin{equation}
\dmdrbdy:= \iota_*\left( \Omega^\bullet_{\bdyM,tw}\otimes_{\iota^{-1}(\cinfty_M)} \iota^{-1}(D_M)\right).
\end{equation}
Since the pushforward functor $\iota_*$ is exact, it suffices to check that the map 
\begin{equation}
\Phi_\del:=\Omega^\bullet_{\bdyM,tw}\otimes_{\iota^{-1}(\cinfty_M)} \iota^{-1}(D_M)\to 
\Omega^{n-1}_{\bdyM,tw}\otimes_{D_{\bdyM}}\left( \cinfty_{\bdyM}\otimes_{\iota^{-1}(\cinfty_M)}\iota^{-1}(D_M)\right)
\end{equation}
induced from $\Phi$ is a quasi-isomorphism.
Upon choosing a tubular neighborhood of the boundary, one may filter both the domain and codomain of $\Phi_\del$ by the order of the differential operator in the normal direction (it is possible to show that this filtration does not depend on this choice).
Moreover, one has isomorphisms
\begin{equation}
Gr\left(\Omega^\bullet_{\bdyM,tw}\otimes_{\iota^{-1}(\cinfty_M)} \iota^{-1}(D_M)\right) \cong \Omega^\bullet_{\bdyM,tw}\otimes_{\cinfty_\bdyM}D_{\bdyM}[\del_t],
\end{equation}

\begin{equation}
Gr\left( \Omega^{n-1}_{\bdyM,tw}\otimes_{D_{\bdyM}}\left( \cinfty_{\bdyM}\otimes_{\iota^{-1}(\cinfty_M)}\iota^{-1}(D_M)\right)\right) \cong \Omega^{n-1}_{\bdyM,tw}[\partial_t],
\end{equation}
with the na\"ive right $\iota^{-1}(D_M)$-module structures (i.e. one decomposes a vector field $X$ as $X=X_\del+X_\nu \partial_t$ near the boundary, and lets the normal part of the vector field act on the $\RR[\partial]$ factors, while the tangential part acts in the canonical way on $D_\bdyM$ and $\Omega^{n-1}_{\bdyM,tw}$, respectively).
Using this characterization, it is clear that $Gr(\Phi_\del)$ is simply the base change over $\RR[\partial_t]$ of the codimension-one version of $\epsilon$. Hence, $Gr(\Phi_\del)$, and therefore $Gr(\Phi)$, and therefore $\Phi$, is a quasi-isomorphism.
\end{proof}

The following Lemma is an immediate consequence of Lemma \ref{lem: drres}.

\begin{lemma}
\label{lem: locfcnlresolution}
There is a cannonical quasi-isomorphism
\begin{equation}
\dmdrrel\otimes_{D_M}\sO_{red}(\sJ(\sE))\to \sO_{loc,red}(\sE)
\end{equation}
of sheaves on $M$.
\end{lemma}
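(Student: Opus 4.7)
The plan is to obtain the desired quasi-isomorphism by composing a tensor-product quasi-isomorphism induced by Lemma \ref{lem: drres} with the (reduced version of the) isomorphism from Lemma \ref{lem: dmoddescription}. Concretely, I would define the map $\dmdrrel \otimes_{D_M} \sO_{red}(\sJ(\sE)) \to \sO_{loc,red}(\sE)$ as the composite
\[
\dmdrrel\otimes_{D_M}\sO_{red}(\sJ(\sE))
\xrightarrow{\Phi\otimes \id}
\delsmoothdistr\otimes_{D_M}\sO_{red}(\sJ(\sE))
\xrightarrow{\digamma_{red}}
\sO_{loc,red}(\sE),
\]
where $\Phi$ is the quasi-isomorphism of Lemma \ref{lem: drres} and $\digamma_{red}$ is the map induced by passage to the reduced quotient of the isomorphism $\digamma$ of Lemma \ref{lem: dmoddescription}. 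The naturality that makes the composite well-defined (and identifies it with the ``obvious'' map) comes from the fact that $\digamma$ is a map of $\cinfty_M$-modules and respects the $D_M$-action used in all tensor products.

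The first step is to verify that $\digamma_{red}$ is an isomorphism. This is a direct consequence of Lemma \ref{lem: dmoddescription}: one takes the short exact sequence $0\to \cinfty_M \to \sO(\sJ(\sE))\to \sO_{red}(\sJ(\sE))\to 0$, tensors with $\delsmoothdistr$ over $D_M$, and matches the result against the defining exact sequence for $\sO_{loc,red}(\sE)$; no new analytic input is needed. The second step is to show $\Phi\otimes \id$ is a quasi-isomorphism. Here I invoke Lemma \ref{lem: derivedisunderived}, which tells us that $\sO_{red}(\sJ(\sE))$ is flat as a left $D_M$-module; flatness guarantees that tensoring the quasi-isomorphism $\Phi$ of Lemma \ref{lem: drres} with the identity preserves the quasi-isomorphism property. (Equivalently, one can observe that both $\dmdrrel \otimes^{\LL}_{D_M} \sO_{red}(\sJ(\sE))$ and $\delsmoothdistr \otimes^{\LL}_{D_M}\sO_{red}(\sJ(\sE))$ compute the same derived tensor product, and by flatness each coincides with its underived version.) Composing the two gives the claimed quasi-isomorphism.

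The sheaf-theoretic nature of the statement is automatic, since all the constituent maps ($\Phi$, $\digamma$, and the quotient maps) are maps of sheaves of $\cinfty_M$-modules on $M$, and passing to reduced versions and tensor products all commute with restriction to opens.

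I do not expect any serious obstacle in this argument: the genuinely delicate work has already been carried out in Lemma \ref{lem: drres} (the filtration-and-graded argument used to resolve $\delsmoothdistr$) and in Lemmas \ref{lem: dmoddescription} and \ref{lem: derivedisunderived} (the identification with local functionals and the flatness of $\sO_{red}(\sJ(\sE))$). The remaining task is essentially a formal consequence of these inputs. The only point meriting care is the precise identification of the ``reduced'' pieces on both sides, which amounts to checking that the constant-functional subspace corresponds to the $\cinfty_M$-summand of $\sO(\sJ(\sE))$ under $\digamma$; this is transparent from the definition of $\digamma$.
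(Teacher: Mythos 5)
Your argument is correct and is exactly what the paper has in mind: the paper offers no written proof, declaring the lemma an ``immediate consequence'' of Lemma \ref{lem: drres}, and your composite $\digamma_{red}\circ(\Phi\otimes\id)$ --- with $\Phi\otimes\id$ a quasi-isomorphism by the flatness from Lemma \ref{lem: derivedisunderived} (or, equally well, Lemma \ref{lmm: dmdrrelflat}), and $\digamma_{red}$ an isomorphism by passing Lemma \ref{lem: dmoddescription} to the reduced quotients via right-exactness of the tensor product --- is the natural filling-in of that claim. Your closing remark on matching the constant functionals with the $\cinfty_M$-summand is also the right point to flag, and it goes through because $\digamma$ preserves the grading by polynomial degree in the fields.
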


We describe one property of $\dmdrrel$ which will be useful in the sequel. 
\begin{lemma}
\label{lmm: dmdrrelflat}
The right $D_M$-module $\dmdrrel$ is a complex of flat $D_M$-modules.
\end{lemma}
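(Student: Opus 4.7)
The plan is to check flatness term-by-term for the bounded graded object underlying $\dmdrrel$. Recall that $\dmdrrel=\mathrm{cone}(\iota^*)[n-1]$, so as a graded sheaf it splits into copies of the degree pieces of $\dmdr$ and of $\dmdrbdy$, and it suffices to treat each family separately.

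First, the bulk pieces $\Omega^k_{M,tw}\otimes_{\cinfty_M}D_M$. Since $\Omega^k_{M,tw}$ is a locally free $\cinfty_M$-module of finite rank, this tensor product is locally a finite direct sum of copies of $D_M$ as a right $D_M$-module; locally free right $D_M$-modules are flat, and flatness of a sheaf of modules can be checked stalkwise, so this case is immediate.

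For the boundary pieces $\iota_*(\Omega^k_\bdyM\otimes_{\cinfty_\bdyM}D_{\bdyM\to M})$, the same local-triviality trick on $\Omega^k_\bdyM$ reduces the problem to showing that $\iota_*D_{\bdyM\to M}$ is a flat right $D_M$-module. Here the key input is Lemma \ref{lemma: pullpush} applied with $P=D_\bdyM$: for any left $D_M$-module $Q$,
\[
\iota_*D_{\bdyM\to M}\otimes_{D_M}Q \;\cong\; \iota_*\bigl(D_\bdyM\otimes_{D_\bdyM}\dpull Q\bigr)\;\cong\;\iota_*(\dpull Q).
\]
Exactness of $\iota_*$ along a closed embedding then reduces flatness of $\iota_*D_{\bdyM\to M}$ to exactness of the pullback functor $\dpull$. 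I would verify the latter by a direct local computation in a tubular neighborhood $\tubnhd\cong\bdyM\times[0,\epsilon)$, using the PBW presentation of $D_M$ as a left $\cinfty_M$-module with basis $\partial_t^n\partial_x^\alpha$. This identifies $\dpull Q$ stalkwise with the quotient $Q/tQ$, reducing exactness of $\dpull$ to injectivity of multiplication by the normal coordinate $t$ on the $D_M$-modules in question.

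The main obstacle will be the injectivity statement: on a general left $D_M$-module, multiplication by $t$ need not be injective (the $D_M$-module generated by a delta-distribution along $\bdyM$ is the standard counterexample). I expect the correct scope of the lemma to be the class of $D_M$-modules which, locally on $M$, arise as locally free $\cinfty_M$-modules with flat connection — in particular all modules of the form $\sO(\sJ(\sE))$, which are the ones to which the lemma is applied elsewhere in the paper. For those, $t$ is manifestly a nonzero-divisor, the relevant $\mathrm{Tor}$ vanishes, and the filtered analysis of $\dmdrbdy$ from the proof of Lemma \ref{lem: drres} can be reused to turn the stalkwise calculation into a clean proof.
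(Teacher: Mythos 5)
Your treatment of the bulk summands coincides with the paper's: each $\Omega^k_{M,tw}\otimes_{\cinfty_M}D_M$ is locally free, hence flat. For the boundary summands you and the paper diverge, and your more cautious route exposes a genuine problem. The paper's proof asserts that $\dmdrbdy$ looks locally like a sum of copies of $\iota_*\iota^{-1}(D_M)$ and deduces flatness from exactness of $\iota_*\iota^{-1}$; but $\dmdrbdy$ is locally a sum of copies of $\iota_*(D_{\bdyM\to M})$, and $D_{\bdyM\to M}=\cinfty_\bdyM\otimes_{\iota^{-1}(\cinfty_M)}\iota^{-1}(D_M)$ is a proper quotient of $\iota^{-1}(D_M)$ (by the left ideal generated by the normal coordinate, via Hadamard's lemma), not a copy of it. Your reduction through Lemma \ref{lemma: pullpush} with $P=D_\bdyM$ is the right way to test this quotient: flatness of $\iota_*(D_{\bdyM\to M})$ is equivalent to exactness of $\dpull$, and the two-term resolution of right $\iota^{-1}(D_M)$-modules
\begin{equation*}
0\to\iota^{-1}(D_M)\xrightarrow{\;t\cdot\;}\iota^{-1}(D_M)\to D_{\bdyM\to M}\to 0
\end{equation*}
shows that $\mathrm{Tor}_1\bigl(D_{\bdyM\to M},\iota^{-1}Q\bigr)\cong\ker\bigl(t:\iota^{-1}Q\to\iota^{-1}Q\bigr)$, with higher Tor vanishing. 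As you observe, this kernel is nonzero for, e.g., the $D_M$-module generated by a delta distribution along $\bdyM$, so flatness against arbitrary left $D_M$-modules fails; the identification the paper's proof relies on does not hold.

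Your proposed repair is also the correct one for the paper's purposes: every left $D_M$-module against which $\dmdrrel$ is actually tensored ($\cinfty_M$, $\sO(\sJ(\sE))$, $\sO_{red}(\sJ(\sE))$) is locally a filtered colimit of free $\cinfty_M$-modules, on which multiplication by $t$ is injective, so the relevant $\mathrm{Tor}_1$ vanishes and the downstream statements (Lemmas \ref{lem: derivedisunderived} and \ref{lem: locfcnlresolution}) are unaffected. So your argument is sound as far as it goes; the obstacle you flag is not an artifact of your method but a real gap in the lemma as stated, and the honest conclusion is the restricted statement you propose --- flatness of $\dmdrrel$ against $t$-torsion-free left $D_M$-modules --- rather than flatness outright.
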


\begin{proof}
As a right $D_M$-module, $\dmdr$ is locally a free $D_M$-module, hence is flat.

As a right $D_M$-module, $\dmdrbdy$ looks locally like a sum of modules of the form $\iota_*\iota^{-1}(D_M)$. 
The endofunctor (acting on the category of left $D_M$-modules) 
\begin{equation}
\iota_*\iota^{-1}(D_M)\otimes_{D_M}\cdot
\end{equation}
is naturally isomorphic to the endofunctor $\iota_*\iota^{-1}$.
This functor is exact, since $\iota_*$ and $\iota^{-1}$ are. (Here, we use the fact that $\bdyM$ is a closed submanifold of $M$.)

The lemma follows.
\end{proof}

\subsection{Imposing the Boundary Condition for Functionals}
\label{sec: OlocEL}
Lemma \ref{lem: locfcnlresolution} helps us to understand the structure of $\sO_{loc}(\sE)$; 
however, we are more interested in $\Oloc$, 
i.e. we are not interested in distinguishing two functionals if they agree when restricted to $\condfieldscs$. 
To this end, note that there is a surjective map $\sO_{loc}(\sE)\to \Oloc$. 
Our aim now is to characterize the kernel of this map.
Proposition \ref{prop: dualsofcondfields} and its Corollary A tell us which functionals on $\sE$ vanish when restricted to $\condfields$, 
namely those which depend in one of their inputs only on the boundary information of the field's value in $\Eb/L$. 
Let us use this information to get a better grasp on $\Oloc$. 

Note the following: if a local functional depends only on the boundary information of one input, then it lies in the space $F^0 \Oloc$, i.e. is described by an integral over $\bdyM$.
This is because local functionals of order $k$ in the fields have their support on the small diagonal of $M^k$.
Within $\dmdrrel\otimes \sO_{red,loc}(\sJ(\sE))$ (which, by Lemma \ref{lem: locfcnlresolution}, resolves $\sO_{loc,red}(\sE)$), the space of functionals which are supported on $\bdyM$ is 
\begin{equation}
\iota_*( \Omega^\bullet_{\bdyM, tw}(D_{\bdyM\to M}))\otimes_{D_{M}}\sO_{red}(\sJ(\sE)).
\end{equation}
By Lemma \ref{lemma: pullpush}, we have
\begin{equation}
\iota_*( \Omega^\bullet_{\bdyM, tw}(D_{\bdyM\to M}))\otimes_{D_{M}}\sO_{red}(\sJ(\sE))\cong \iota_*\left(\Omega^\bullet_{\bdyM, tw}\otimes_{\cinfty_{\bdyM}}\sO_{red}(\dpull (\sJ(\sE)))\right),
\end{equation}
where $\sO_{red}(\dpull(\sJ(\sE)))$ is defined similarly to $\sO_{red}(\sJ(\sE))$, but by taking continuous duals and symmetric powers of $\dpull \sJ(\sE)$ over $\cinfty_\bdyM$.

As discussed in \autocite{cost}, $\sJ(\sE[-1])$ is an $L_\infty$-algebra in the category of $D_M$-modules.
The $L_\infty$ brackets on $\sE[-1]$ induce the brackets on $\sJ(\sE[-1])$, and the requirement that the brackets on $\sE[-1]$ be polydifferential operators guarantees that the induced brackets on $\sJ(\sE[-1])$ are maps of $D_M$-modules.
Throughout, we have implicitly endowed $\sO(\sJ(\sE))$ with the Chevalley-Eilenberg differential induced from this $L_\infty$-algebra structure.
The maps we have constructed so far, relating different descriptions of the local functionals, have intertwined the Chevalley-Eilenberg differentials on the respective models.
We mention this Chevalley-Eilenberg differential now because we proceed with a construction for which it is necessary to verify that the Chevalley-Eilenberg differential is preserved.
There is a canonical map of shifted $L_\infty$-algebras in the category of $D_{\bdyM}$-modules
\begin{equation}
\Upsilon: \dpull (\sJ(\sE)) \to \sJ(\sL')
\end{equation}
constructed as the composite of the following two maps.
First, we construct a map
\begin{equation}
\Upsilon_1: \dpull (\sJ(\sE))=\cinfty_\bdyM \otimes_{\iota^{-1}(\cinfty_M)}\iota^{-1}(\sJ(\sE)) \to \sJ(\sEb)
\end{equation}
given by
\begin{equation}
\Upsilon_1(f\otimes \sigma) = f\rho(\sigma);
\end{equation}
it is straightforward to check that $\Upsilon_1$ is well-defined (using the fact that $\rho$ arises from restriction to the boundary followed by a bundle operation).
and moreover respects the tensor product defining $\dpull (\sJ(\sE))$.
Another way to describe $\Upsilon_1$ is as follows.
Note that $\dpull(\sJ(\sE))$ is the sheaf of sections of $J(E)|_\bdyM$.
There is a natural bundle map
\[
J(E)|_\bdyM \to J(E|_\bdyM)
\]
(which ``forgets normal jets'') and a natural bundle map
\[
J(E|_\bdyM)\to J(\Eb).
\]
The map $\Upsilon_1$ is induced from the composite of these two maps on the sheaves of sections.
Next, we consider the map
\begin{equation}
\Upsilon_2: \sJ(\sEb)\to \sJ(\sL')
\end{equation}
induced from the bundle map $\Eb\to L'$, and we set $\Upsilon =\Upsilon_2\circ \Upsilon_1$.
It follows that there is an inclusion
\begin{equation}
\Upsilon^\vee:\sJ(\sL')^\vee \to (\dpull \sJ(\sE))^\vee
\end{equation}
of left $D_\bdyM$ modules. Because $\Upsilon$ is a map of shifted $L_\infty$-algebras, it follows that 
\begin{equation}
\sJ(\sL')^\vee\otimes_{\cinfty_\bdyM} \sO(\dpull(\sJ(\sE)))\subset \sO_{red}(\dpull (\sJ(\sE)))
\end{equation}
is closed under the Chevalley-Eilenberg differential on $\sO_{red}(\dpull(\sJ(\sE)))$. 
The inclusion also respects the $D_\bdyM$ module structure, and it is straightforward to show that this sub $D_\bdyM$-module is flat. Hence, we can make the following definition:

\begin{definition}
The complex of \textbf{de Rham functionals vanishing on $L$} is the complex
\begin{equation}
\sV(L): = \iota_*\left( \Omega^\bullet_\bdyM \otimes_{\cinfty_{\bdyM}} \sJ(\sL')^\vee\otimes_{\cinfty_\bdyM} \sO_{red}(\dpull (\sJ(\sE)))\right).
\end{equation}
The discussion preceding this definition guarantees that $\sV(L)$ is a subcomplex of $\dmdrrel\otimes_{D_M}\sO_{red}(\sJ(\sE))$. 
\end{definition}

\begin{notation}
We define the following complex of sheaves
\begin{equation}
\fullOloc:=\left(\dmdrrel\otimes_{D_M}\sO_{red}(\sJ(\sE))\right)/\sV(L)
\end{equation}
\end{notation}

\begin{lemma}
There is a quasi-isomorphism of complexes of sheaves
\begin{equation}
\fullOloc\to \Olocred.
\end{equation}
\end{lemma}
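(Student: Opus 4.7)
The plan is to compare the defining short exact sequence of $\fullOloc$ with the short exact sequence coming from restricting local functionals to \condfieldsterm fields, and then to invoke the five lemma using the quasi-isomorphism $\digamma$ of Lemma \ref{lem: locfcnlresolution}. More precisely, let $K \subset \sO_{loc,red}(\sE)$ denote the kernel of the surjection $\sO_{loc,red}(\sE) \twoheadrightarrow \Olocred$; I want to fit $\digamma$ into a ladder
\[
\begin{tikzcd}
0 \ar[r] & \sV(L)\ar[r]\ar[d] & \dmdrrel\otimes_{D_M}\sO_{red}(\sJ(\sE))\ar[r]\ar[d,"\digamma"] & \fullOloc\ar[r]\ar[d] & 0\\
0 \ar[r] & K\ar[r] & \sO_{loc,red}(\sE)\ar[r] & \Olocred\ar[r] & 0
\end{tikzcd}
\]
in which the left vertical is also a quasi-isomorphism; since the middle is, by Lemma \ref{lem: locfcnlresolution}, the right vertical will then be a quasi-isomorphism by the five lemma applied to the associated long exact sequences of cohomology sheaves.

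First I would construct the left vertical and verify commutativity. An element of $\sV(L)$ contains a tensor factor in $\sJ(\sL')^\vee$; under $\digamma$ it becomes a boundary-integrated local functional whose integrand probes $\Upsilon(j^\infty\phi)\in \sJ(\sL')$, that is, the $L'$-component of $\rho(\phi)$. For $\phi\in\condfieldscs$ one has $\rho(\phi)\in \sL$, so this component vanishes and hence $\digamma(\sV(L))\subseteq K$. The right vertical is then induced on quotients, and commutativity is automatic from the construction.

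The main obstacle is showing that the left vertical is itself a quasi-isomorphism. The strategy is to characterize $K$ explicitly and then show that $\digamma$ restricts to a quasi-isomorphism onto it. By Proposition \ref{prop: dualsofcondfields} and its Corollary~A, a local functional on $\sE_c$ vanishes on $\condfieldscs$ precisely when it depends in one of its inputs only on the $\Eb/L\cong L'$-component of the boundary value; and the paragraph preceding the lemma statement already observes that such a local functional must then lie in the boundary piece $F^0\sO_{loc,red}(\sE)$, since locality forces support on the small diagonal, so if one argument is boundary-valued, all of them are. Combining this with the filtration argument of Lemma \ref{lem: dmoddescription} and the identification
\[
F^0\bigl(\dmdrrel\otimes_{D_M}\sO_{red}(\sJ(\sE))\bigr)\cong \iota_*\bigl(\Omega^\bullet_\bdyM\otimes_{\cinfty_\bdyM}\sO_{red}(\dpull\sJ(\sE))\bigr)
\]
furnished by Lemma \ref{lemma: pullpush}, I would identify $\sV(L)$ as exactly the subcomplex of $F^0$-functionals in which at least one jet-factor is pulled back along $\Upsilon^\vee$ from $\sJ(\sL')^\vee$. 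Verifying that $\digamma$ is a quasi-isomorphism on this subcomplex then reduces to a boundary version of Lemma \ref{lem: locfcnlresolution}: one applies the same de Rham resolution argument but on $\bdyM$, now with coefficients in the flat $D_\bdyM$-module $\sJ(\sL')^\vee\otimes_{\cinfty_\bdyM}\sO_{red}(\dpull\sJ(\sE))$ whose flatness was already invoked when defining $\sV(L)$; compatibility with Chevalley--Eilenberg differentials is automatic because $\Upsilon$ is a map of $L_\infty$-algebras in $D_\bdyM$-modules.

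With both outer vertical maps identified as quasi-isomorphisms, the five lemma applied to the induced ladder of long exact sequences yields the desired quasi-isomorphism $\fullOloc\to \Olocred$ of complexes of sheaves on $M$.
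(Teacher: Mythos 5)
Your proposal is correct and follows essentially the same route as the paper: the paper likewise fits $\digamma$ into a ladder of short exact sequences whose bottom-left term is $\iota_*\left(\Omega^{n-1}_{\bdyM,tw}\otimes_{D_\bdyM}\sJ(\sL')^\vee\otimes_{\cinfty_\bdyM}\sO(\dpull(\sJ(\sE)))\right)$, identified via Corollary A as exactly the local functionals vanishing on $\condfieldscs$, and proves the left vertical is a quasi-isomorphism by the same boundary de Rham resolution plus flatness argument before invoking the five lemma. No gaps.
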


\begin{proof}
Since $\dmdrpurebdy\to \Omega^{n-1}_{\bdyM,tw}$ is a quasi-isomorphism of right $D_\bdyM$-modules and $\sJ(\sL')^\vee\otimes_{\cinfty_\bdyM} \sO(\dpull(\sJ(\sE)))$ is a flat left $D_\bdyM$-module, we have a quasi-isomorphism
\begin{equation}
\label{eq: vanishingfcnls}
\sV(L)\to \iota_*\left(\Omega^{n-1}_{\bdyM,tw}\otimes_{D_\bdyM} \sJ(\sL')^\vee\otimes_{\cinfty_\bdyM} \sO(\dpull(\sJ(\sE)))\right).
\end{equation}
Using Corollary A of the appendix, we identify this latter space of functionals as precisely the space of local functionals on $\sE_c$ which vanish when restricted to $\condfieldscs$.
Hence, we have a map of short exact sequences:

\begin{equation}
\xymatrix{
0 \ar[r] & \sV(L) \ar[r]\ar[d]& A_1\ar[r]\ar[d] & \fullOloc\ar[r]\ar[d]&0\\
0 \ar[r] &A_2  \ar[r]& \sO_{loc,red}(\sE)\ar[r]& \Olocred\ar[r]&0
},
\end{equation}
where 
\begin{equation}
A_1 = \dmdrrel\otimes_{D_M}\sO_{red}(\sJ(\sE))
\end{equation}
and 
\begin{equation}
A_2 = \iota_*\left(\Omega^{n-1}_{\bdyM,tw}\otimes_{D_\bdyM} \sJ(\sL')^\vee\otimes_{\cinfty_\bdyM}\sO(\dpull(\sJ(\sE)))\right).
\end{equation}
Since the first two vertical maps are quasi-isomorphisms, it follows by the snake lemma and the five lemma that the last vertical arrow is a quasi-isomorphism as well.
\end{proof}

\begin{remark}
We have implicitly also found a description of the space $\sO_{loc}(\condfields)$ before taking the de Rham resolution of densities: it is simply the quotient of $\sO_{loc}(\sE)$, as described in Lemma \ref{lem: dmoddescription}, by the codomain of Equation \ref{eq: vanishingfcnls}.
\end{remark}

\subsection{BF Theory, an Extended Example}
\label{subsec: bfthyloc}
The preceding discussion is a bit dense and difficult to follow. 
To help the reader to get a sense of these constructions, we now make an extended study of BF theory on the upper half-space $\HH^n$.
We will study BF theory with the $A$ and $B$ boundary conditions (the terminology follows Example \ref{ex: bfbdycond}).

As in the preceding discussion, we will start by understanding $\sO_{loc,red}(\sE)$. Recall (Lemma \ref{lem: locfcnlresolution}), that there is a quasi-isomorphism

\begin{equation}
\dmdrrel\otimes_{D_M}\sO_{red}(\sJ(\sE))\to \sO_{loc,red}(\sE).
\end{equation}

One can use an argument from the Proposition 7.6 of \autocite{rabaxial} (although this argument is itself derivative from other sources) to construct a quasi-isomorphism of left $D_M$-modules
\begin{equation}
\sO_{red}(\sJ(\sE)) \overset{\sim}{\to} \cinfty_M \otimes C^\bullet_{red}(\fg, \Sym(\fg[n-2])).
\end{equation}
Hence, using Lemma \ref{lmm: dmdrrelflat}, one has a  zigzag of quasi-isomorphisms
\begin{equation}
\dmdrrel\otimes_{D_M}\cinfty_M\otimes \left(C^\bullet_{red}(\fg, \Sym(\fg[n-2]))\right) \to \sO_{loc,red}(\sE).
\end{equation}
It is straightforward to show that
\begin{equation}
\dmdrrel\otimes_{D_M}\cinfty_M \cong \left(\Omega^\bullet_{M}[n]\overset{\iota^*}{\to}\iota_*\Omega^\bullet_{\bdyM}[n-1]\right)=:\drrel;
\end{equation}
hence, we have constructed a zigzag of quasi-isomorphisms
\begin{equation}
\drrel \otimes C^\bullet_{red}(\fg, \Sym (\fg[2-n]))\to \sO_{loc,red}(\sE).
\end{equation}

Now, let us understand $\sV(L)$ better. Let us take first the $A$ boundary condition.
Here, $L'=\Lambda^\bullet T^* \bdyM\otimes \fg^\vee[n-2]$.
As before, we can show 
\begin{equation}
\sJ(\sL')\simeq \cinfty_\bdyM\otimes \fg^\vee[n-2]
\end{equation}
and 
\begin{equation}
\dpull(\sJ(\sE))\simeq \cinfty_\bdyM \otimes (\fg[1]\oplus \fg^\vee[n-2]).
\end{equation}
Hence, we have a zigzag of quasi-isomorphisms
\begin{equation}
\iota_*\left(\Omega^\bullet_{\bdyM}\right)\otimes \left( \fg[2-n]\otimes C^\bullet(\fg, \Sym(\fg[2-n])\right)\to \sV(L).
\end{equation}
Let 
\begin{equation}
\sO(\fg;A)
\end{equation}
denote the total complex
\begin{equation}
\Omega^\bullet_{M}[n]\otimes C^\bullet_{red}(\fg, \Sym (\fg[n-2]))\to \iota_*(\Omega^\bullet_{\bdyM})[n-1]\otimes C^\bullet_{red}(\fg),
\end{equation}
where the horizontal map is the tensor product of the maps
\begin{equation}
\iota^* :\Omega^\bullet_M \to \iota_*(\Omega^\bullet_{\bdyM})
\end{equation}
and the natural projection
\begin{equation}
C^\bullet_{red}(\fg, \Sym(\fg[n-2]))\to C^\bullet_{red}(\fg).
\end{equation}
We have thus constructed a zigzag of quasi-isomorphisms
\begin{equation}
\sO(\fg; A) \to \Olocred
\end{equation}
for BF theory with the $A$ boundary condition.
Similarly, define
\begin{equation}
\sO(\fg; B):=\Omega^\bullet_{M}[n]\otimes C^\bullet_{red}(\fg, \Sym (\fg[n-2]))\to \iota_*(\Omega^\bullet_{\bdyM})[n-1]\otimes \Sym^{\geq 1}(\fg[n-2]);
\end{equation}
we can construct a zigzag of quasi-isomorphisms
\begin{equation}
\sO(\fg; B) \to \Olocred
\end{equation}
for BF theory with the $B$ boundary condition.

Let us now specialize to the case $M=\HH^2$, $\bdyM = \RR$ and take global sections. In this case, we have equivalences
\begin{align}
H^\bullet(\sO(\fg;B)(\HH^2)) &\simeq \left( C^\bullet_{red}(\fg, \Sym(\fg))[2] \to \Sym^{\geq 1}(\fg)[1]\right)\\
&\simeq \left(H^{\geq 1}(\fg, \Sym(\fg))\right)[2] \oplus \left(\Sym^{\geq 1}(\fg)/\Sym^{\geq 1}(\fg)^{\fg}\right)[1].
\end{align}
When $\bdyM=\emptyset$, the zeroth and first cohomology of the complex of local functionals represent the space of deformations and obstructions, respectively, for quantizations of classical BF theory.
Adopting the same terminology here without proving the analogous statement, we find that the space of obstructions for quantizations of 2D BF theory with $B$ boundary condition is $H^3(\fg, \Sym(\fg))$, and the space of deformations for 2D BF theory is $H^2(\fg, \Sym(\fg))$.
The cohomology acquires a grading from the symmetric degree in $\Sym(\fg)$; we will call this the \emph{B-weight}.
If we restrict to those functionals which have a B-weight of $+1$, and further assume that $\fg$ is semi-simple, we find that 
\begin{equation}
H^\bullet\left( \sO(\fg; B)(\HH^2)\right)_{1} \cong \fg[1],
\end{equation}
the subscript $1$ reminding us that we are computing the cohomology in $B$-weight 1.
To perform this computation, we use Whitehead's theorem and the fact that semi-simple Lie algebras have trivial centers.
Given $x\in \fg$, one may exhibit the isomorphism just described via the $B$-field functional
\begin{equation}
J_x(\beta) = \left(\int_\RR \iota^* \beta\right)(x).
\end{equation}
It is straightforward to check that this functional is closed in $\Olocred$.

\section{The Factorization Algebras of Observables}
\label{sec: FAs}
In this section, given a bulk-boundary system $(\sE,\sL)$, we construct a factorization algebra $\Obcl_{\sE,\sL}$ of classical observables for the bulk-boundary system on $M$. $\Obcl_{\sE,\sL}$ will be constructed as ``functions'' on $\condfields$. $\Obcl_{\sE,\sL}$ has the advantage of being easy to define; however, it does not manifestly carry the $P_0$ (shifted Poisson) structure that one expects to find on the space of functions on a (--1)-shifted symplectic space. Hence, we construct also a $P_0$ factorization algebra $\widetilde{\Obcl_{\sE,\sL}}$ and a quasi-isomorphism $\widetilde{\Obcl_{\sE,\sL}}\to \Obcl_{\sE,\sL}$. We closely follow \autocite{CG2}.

\begin{definition}
\label{def: classFA}
Let $(\sE,\sL)$ be a bulk-boundary system. Define 
\begin{equation}
\Obcl_{\sE,\sL}(U):= (\sO(\condfields(U)), d_{CE}) = \left(\prod_{k\geq 0} \underline{CVS}\left(\condfields(U)^{\hotimes_\beta k}, \RR\right)_{S_k}, d_{CE}\right)
\end{equation}
where the tensor product $\hotimes_\beta$ is the completed bornological tensor product of convenient vector spaces. Here, $d_{CE}$ denotes the Chevalley-Eilenberg differential constructed from the structure of $L_\infty$ algebra on $\condfields(U)[-1]$. 
We note that $\Obcl_{\sE,\sL}(U)$ is a differential-graded commutative algebra in the symmetric monoidal category $\CVS$ for all $U$.
Using the functor $\CVS\to \DVS$ and the fact that the tensor product $\hotimes_\beta$ represents the multicategory structure in $\DVS$, we may view $\Obcl_{\sE,\sL}(U)$ as a differential-graded commutative algebra in the multicategory $\DVS$ for all~$U$.

Given disjoint open subsets $U_1,\ldots, U_k\subset M$ all contained in an open subset $V
\subset M$, define the structure map
\begin{equation}
m_{U_1,\cdots, U_k}^V:\Obcl_{\sE,\sL}(U_1)\hotimes_\beta \cdots \hotimes_\beta \Obcl_{\sE,\sL}(U_k)\to \Obcl_{\sE,\sL}(V)
\end{equation}
to be the composite
\begin{align}
\Obcl_{\sE,\sL}(U_1)\hotimes_\beta \cdots \hotimes_\beta \Obcl_{\sE,\sL}(U_k)&\to \Obcl_{\sE,\sL}(V)\hotimes_\beta \cdots \hotimes_\beta \Obcl_{\sE,\sL}(V)\nonumber\\
&\to \Obcl_{\sE,\sL}(V)
\end{align}
where the first map is induced from the restriction maps~$\condfields(V)\to \condfields(U_i)$ and the second map is the induced from the commutative product on~$\Obcl_{\sE,\sL}(V)$.
\end{definition}

\begin{remark}
The construction here is almost identical to that of \autocite{CG2}. We simply impose boundary conditions on the fields of interest.
\end{remark}

\begin{theorem}
\label{thm: classobsformFA}
The differentiable vector spaces $\Obcl_{\sE,\sL}(U)$, together with the structure maps $m_{U_1,\ldots,U_k}^V$, form a factorization algebra.
\end{theorem}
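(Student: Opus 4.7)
The proof has two parts: verification of the prefactorization axioms, and verification of the Weiss cosheaf (local-to-global) condition. For the prefactorization axioms, I would note that by Lemma \ref{lem: tildefieldsmodel} (or directly by its strict-pullback presentation), $\condfields$ is a presheaf of $L_\infty$-algebras on $M$ whose restriction maps $\condfields(V)\to \condfields(U_i)$ strictly intertwine all the brackets $\ell_k$. Applying the contravariant functor $\sO$ gives dga maps $\Obcl_{\sE,\sL}(U_i)\to \Obcl_{\sE,\sL}(V)$; composing with the commutative product on $\Obcl_{\sE,\sL}(V)$ yields $m^V_{U_1,\ldots,U_k}$. Associativity under nesting of opens, $S_k$-equivariance under permutations of disjoint opens, and compatibility with the Chevalley--Eilenberg differential are all formal consequences of functoriality together with commutativity of the product, exactly as in \autocite{CG2}.

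For the Weiss cosheaf condition, I must show that for every open $V\subset M$ and every Weiss cover $\mathfrak U = \{U_\alpha\}$ of $V$, the natural map
\begin{equation}
\check C(\mathfrak U, \Obcl_{\sE,\sL})\to \Obcl_{\sE,\sL}(V)
\end{equation}
is a quasi-isomorphism in $\DVS$. The plan is to filter
\begin{equation}
\Obcl_{\sE,\sL}(U)=\prod_{k\geq 0}\underline{CVS}(\condfields(U)^{\hotimes_\beta k},\RR)_{S_k}
\end{equation}
by polynomial degree. Only the piece of $d_{CE}$ dual to $\ell_1$ preserves polynomial degree; the pieces dual to $\ell_k$ with $k\geq 2$ strictly increase it. This yields a bounded filtration whose associated graded is $\prod_k \Sym^k(\condfields^\vee)$ with only the linear differential. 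By a standard spectral-sequence argument, it then suffices to verify Weiss descent for each $\Sym^k(\condfields^\vee)$ equipped only with the linear differential.

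By duality, this reduces to checking that the completed tensor powers $\condfieldscs^{\hotimes_\beta k}$ form Weiss cosheaves of convenient vector spaces on $M$. For $k=1$, the bundle splitting $\Eb = L\oplus L'$ provided by Definition \ref{def: bdycond} extends, by a bump-function argument near $\bdyM$, to a splitting $\sE_c \cong \condfieldscs \oplus (\iota_*\sL')_c$; hence $\condfieldscs$ is a direct summand of $\sE_c$, which is a Weiss cosheaf by the cosheaf analogue of Lemma B.7.6 of \autocite{CG1}. The summand $\condfieldscs$ therefore inherits the Weiss cosheaf property. For $k>1$, I would follow \autocite{CG2}: a Weiss cover $\{U_\alpha\}$ of $V$ induces an ordinary open cover $\{U_{\alpha_1}\times\cdots\times U_{\alpha_k}\}$ of $V^k$, and one identifies $\condfieldscs^{\hotimes_\beta k}$ with the compactly supported sections of a suitable (sub)bundle over $M^k$ with boundary constraints imposed on the strata touching $\bdyM^k$. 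Ordinary \v{C}ech descent for compactly supported smooth sections on $M^k$ then yields the claim.

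The main obstacle is the $k>1$ step, where one must simultaneously control the bornological completed tensor product, the $S_k$-coinvariants, and the partial boundary conditions living on the various strata of $M^k$. I would handle this precisely by using the splitting $\Eb\cong L\oplus L'$ to decouple the boundary constraint from the functional-analytic descent problem: it reduces $\condfieldscs^{\hotimes_\beta k}$ to a direct summand of $\sE_c^{\hotimes_\beta k}$ cut out by the normal projection maps onto $L'$ at each boundary factor, and Weiss descent for $\sE_c^{\hotimes_\beta k}$ is already established in \autocite{CG2}.
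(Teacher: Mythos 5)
Your overall strategy is the paper's: reduce the Weiss condition to the free theory by filtering by polynomial degree, and then use the splitting coming from the complement $L'\subset \Eb$ to transfer descent from the boundary-condition-free case already handled in \autocite{CG2}. The prefactorization part and the reduction to the linear differential are fine. The one step that does not work as literally stated is the sentence ``by duality, this reduces to checking that the completed tensor powers $\condfieldscs^{\hotimes_\beta k}$ form Weiss cosheaves.'' The object for which descent must be verified is the precosheaf of continuous duals $U\mapsto \innerhom{\condfields(U)^{\hotimes_\beta k}}{\RR}$, i.e.\ compactly supported \emph{distributional} sections of $(E^!)^{\boxtimes k}$ with the dual boundary constraints; this is not $\condfieldscs^{\hotimes_\beta k}$, and abstract dualization does not convert a descent statement for one into a descent statement for the other (dualizing the \v{C}ech colimit produces a limit, which is the wrong shape). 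Identifying the smooth compactly supported model with the distributional one requires an Atiyah--Bott-type quasi-isomorphism with boundary (Proposition A.1 of \autocite{GRW}), which the paper deliberately avoids at this stage and which you do not supply; the paper notes that \autocite{GRW} takes exactly that alternative route by \emph{defining} observables via $\Sym(\condfieldscs[1])$.

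The repair is to run your splitting argument directly on the dual spaces, which is what the paper does: Proposition \ref{prop: dualsofcondfields} and Corollary A give $\sE(U)\cong \condfields(U)\oplus \sL'(U\cap\bdyM)$ and hence exhibit $\innerhom{\condfields(U)^{\hotimes_\beta m}}{\RR}$ as an explicit quotient of $\innerhom{\sE(U)^{\hotimes_\beta m}}{\RR}$ in both $\CVS$ and $\DVS$ (producing the splitting, rather than just the quotient, is essential precisely because $\CVS\to\DVS$ does not preserve colimits). The explicit contracting homotopy of Lemma A.5.8 of \autocite{CG1} for the unconditioned \v{C}ech complex is built from a partition of unity on $U^m$, so it visibly preserves the kernel of that quotient map and therefore descends to a contracting homotopy for the conditioned case. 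Your closing paragraph gestures at exactly this mechanism (``a direct summand of $\sE_c^{\hotimes_\beta k}$ cut out by the normal projection maps onto $L'$''), so the idea is present; it just needs to be applied to the functional spaces themselves rather than to $\condfieldscs^{\hotimes_\beta k}$.
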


\begin{proof}
The proof is very similar to the analogous proof in \autocite{CG2}. The proof that the classical observables form a prefactorization algebra is identical to that in \autocite{CG2}. For the Weiss cosheaf condition, it suffices, as in \autocite{CG2}, to check the condition for free theories. A similar situation arises in \autocite{GRW}, though there one uses $\Sym(\condfieldscs[1])$ for the classical observables. By the same arguments as in the corresponding proof in \autocite{CG2}, we need only to show that, given any Weiss cover $\fU=\{U_i\}_{i\in I}$ of an open subset $U\subset M$, the map 
\begin{equation}
\label{eq: cechmap}
\bigoplus_{n=0}^\infty \bigoplus_{i_1,\cdots, i_n}\underline{\CVS}\left( \condfields(U_{i_1}\cap\cdots\cap U_{i_n})^{\hotimes_\beta m},\RR\right)_{S_k} [n-1]\to \underline{\CVS} \left(\condfields(U)^{\hotimes_\beta m},\RR\right)_{S_k}
\end{equation}
is a quasi-isomorphism, where the left-hand side is endowed with the \v{C}ech differential and the internal differential induced by $\ell_1$ only.

We show in Section \ref{sec: innerhom} that there is a non-canonical splitting 
\begin{equation}
    \sE(U) \cong \condfields(U) \oplus \sL'(U\cap \bdyM).
\end{equation}
We may therefore understand the mapping spaces appearing in Equation \eqref{eq: cechmap} as quotients of corresponding spaces without boundary conditions imposed (in both categories of interest, $\CVS$ and $\DVS$; this was why it was important to actually produce this splitting).
More precisely, the kernel of the quotient map 
\begin{equation}
\label{eq: quotientoffunctionals}
\underline{\CVS} \left(\sE(U)^{\hotimes_\beta m},\RR\right)_{S_k}\to \underline{\CVS} \left(\condfields(U)^{\hotimes_\beta m},\RR\right)_{S_k}
\end{equation}
is, naturally, the space of functionals which vanish when all inputs satisfy the boundary condition.
The quoted results from the appendix show that one may actually understand the codomain of the above equation as a quotient in $\CVS$ and $\DVS$.
In Lemma A.5.8 of \autocite{CG1}, Costello and Gwilliam describe an explicit contracting homotopy for the total complex in Equation \eqref{eq: cechmap} where no boundary conditions are imposed.
This contracting homotopy involves only addition and (the transpose of) multiplication by compactly-supported smooth functions on $U^m$ constituting a partition of unity for the cover $\{U_i^m\}_{i\in I}$ of $U^m$.
It is obvious that such a contracting homotopy will preserve the kernel of the quotient map Equation \eqref{eq: quotientoffunctionals}; hence it will also furnish a contracting homotopy for the codomain in Equation \eqref{eq: quotientoffunctionals}.
\end{proof}

In \autocite{CG2}, Theorem 6.4.0.1, it is also shown that $\Obcl$ possesses a sub-factorization algebra $\widetilde{\Obcl}$ which has a $P_0$ structure. We have the same situation here. Let us first make a definition:

\begin{definition}
Let 
\[
I\in \underline{\CVS}\left( \condfields(U)^{\hotimes_\beta k}, \RR\right)_{S_k}
\]
with $k\geq 1$. 
Choose a representative $\bar I$ of $I$ in
\[
\underline{\CVS}\left( \condfields(U)^{\hotimes_\beta k}, \RR\right).
\]
$\bar I$ determines $k$ maps 
\begin{equation}
\underline{\CVS}\left( \condfields(U)^{\hotimes_\beta (k-1)}, \RR\right)_{S_{k-1}}\to \underline{\CVS}\left( \condfields(U), \RR\right)
\end{equation}
by the hom-tensor adjunction in $\CVS$; the $k$ maps correspond to the $k$ choices of tensor factor for which to perform the hom-tensor adjunction.
This collection of $k$ maps does not depend on the choice of $\bar I$.
We say that $I$ has \textbf{smooth first derivative} if all $k$ maps have image in $\condfieldscs[1](U)$. 
We consider this condition to be vacuously satisfied when $k=0$. Given $J\in \sO(\condfields(U))$, we say that $J$ has smooth first derivative if each of its Taylor components does.
\end{definition} 

\begin{theorem}
\label{thm: P0classobsFA}
Let
$\widetilde{\Obcl_{\sE,\sL}}(U)$ denote the subspace of $\Obcl_{\sE,\sL}(U)$ consisting of functionals with smooth first derivative. 
\begin{enumerate}
\item $\widetilde{\Obcl_{\sE,\sL}}$ is a sub-factorization algebra of $\Obcl_{\sE,\sL}$. 
\item $\widetilde{\Obcl_{\sE,\sL}}$ posseses a Poisson bracket of degree $+1$.
\item The inclusion $\widetilde{\Obcl_{\sE,\sL}}\to \Obcl_{\sE,\sL}$ is a quasi-isomorphism.
\end{enumerate}
\end{theorem}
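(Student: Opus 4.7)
The plan is to follow the template of Theorem 6.4.0.1 of \autocite{CG2}, adapting each step to the presence of the boundary condition; the genuine technical novelty appears only in part (3). For part (1), I would first check that the smooth-first-derivative condition is preserved by $d_{CE}$. Because each $\ell_k$ is a polydifferential operator, the first derivative (in any slot) of $d_{CE} I$ may be written as a finite sum of compositions of first derivatives of Taylor components of $I$ with differential operators coming from the $\ell_k$; differential operators preserve smoothness, so $d_{CE}$ preserves $\widetilde{\Obcl_{\sE,\sL}}$. Closure under the prefactorization structure maps is immediate, since the restriction maps on $\condfields$ are dual to extension-by-zero on observables and the commutative product on $\Obcl_{\sE,\sL}(V)$ both preserve smoothness of the first derivative. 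The Weiss cosheaf condition then follows by re-running the argument of Theorem \ref{thm: classobsformFA}: the contracting homotopy of Lemma A.5.8 of \autocite{CG1} is built from multiplication by compactly-supported smooth functions on $U^m$ drawn from a partition of unity, and this operation manifestly preserves smoothness in the derivative slot.

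For part (2), the Poisson bracket of degree $+1$ is the one induced from the symplectic pairing via the usual formula. If $I,J$ have smooth first derivative, then by definition their first derivatives $dI(\varphi),dJ(\varphi)$ take values in (a shift of) $\condfieldscs(U)$, so one may set
\[
\{I,J\}(\varphi) := \ip[dI(\varphi),dJ(\varphi)]
\]
(with the appropriate sign) and extend by the graded Leibniz rule. The essential check is compatibility with $d_{CE}$: by Equation \eqref{eq: boundaryterm} the obstruction to the cyclicity of $\ip$ under $\ell_1$ is the boundary integral $\ip[\rho(dI),\rho(dJ)]_\partial$, which vanishes because $\rho(dI)$ and $\rho(dJ)$ land in $\sL$ and $\sL$ is isotropic in $\sEb$ by Definition \ref{def: bdycond}. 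Invariance of $\ip$ under the higher brackets $\ell_k$ on $\sE$ (which holds without any restriction on inputs) handles the remaining cyclicity condition, and graded antisymmetry and Jacobi for $\{\cdot,\cdot\}$ are formal consequences of the skew-symmetry of $\ip$ on odd generators.

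The main obstacle is part (3). The claim is that every observable in $\Obcl_{\sE,\sL}(U)$ is cohomologous to one with smooth first derivative. Following \autocite{CG2}, I would filter by polynomial degree and produce, at each polynomial order, a chain homotopy between the identity and a projection onto the smooth-first-derivative subspace. The key analytic input is a parametrix $P$ for the elliptic operator $\ell_1$ on $\condfields(U)$: an operator with smooth integral kernel such that $\ell_1 P + P \ell_1 - \mathrm{id}$ is smoothing. Precomposing $I$ with $P$ in any one slot then yields a functional with smooth first derivative in that slot, and the difference is controlled by $d_{CE}$ via the parametrix identity. The genuinely new step, compared to \autocite{CG2}, is that the parametrix must respect the boundary condition, i.e.\ must be an operator on $(\condfields(U),\ell_1)$ rather than on $(\sE(U),\ell_1)$. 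The TNBFT structure makes this feasible: on the tubular neighborhood $\tubnhd \cong \bdyM \times [0,\epsilon)$, where the complex decomposes as $\sL$-conditioned $\sEb$-valued forms tensored with $\Omega^\bullet_{[0,\epsilon)}$, one can combine a parametrix for $\ell_{1,\partial}$ acting on $\sL$ with the explicit de Rham contracting homotopy used in the proof of Lemma \ref{lem: fieldslagrangian}, and patch to an interior parametrix for $\ell_1$ on $\mathring M$ by a partition of unity. Once this boundary-compatible parametrix is in hand, the filtered quasi-isomorphism argument of \autocite{CG2} carries over verbatim.
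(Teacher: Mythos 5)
Your parts (1) and (2) are essentially the paper's argument, but part (1) as written has a small gap that is exactly the point the paper takes care to emphasize. You say the first derivative of $d_{CE}I$ is "a composition of first derivatives of $I$ with differential operators coming from the $\ell_k$," and that "differential operators preserve smoothness." To move an $\ell_k$ off the slot in which you are differentiating and onto the smooth kernel $D$ (i.e. to pass from $\ip[D(e_1,\ldots,e_{k-1}),\ell_1 e_k]$ to $\pm\ip[\ell_1 D(e_1,\ldots,e_{k-1}),e_k]$), you must integrate by parts, and on a manifold with boundary this produces boundary terms. These vanish only because $(\condfieldscs[-1],\ell_i,\ip)$ is a precosheaf of \emph{cyclic} $L_\infty$ algebras --- which holds on the $\sL$-conditioned fields but not on $\sE$ itself (Equation \eqref{eq: boundaryterm}). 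You invoke exactly this vanishing in part (2) but not in part (1), where it is equally essential; the fix is one sentence, but as stated the argument for closure under $d_{CE}$ is incomplete.

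Part (3) is where you genuinely diverge from the paper, and where the gap is more serious. You propose to build a properly supported parametrix for $\ell_1$ on $\condfields(U)$, compatible with the boundary condition, on an \emph{arbitrary} open $U$, and you assert its existence by patching a collar construction with an interior parametrix. The existence of such an object is precisely the kind of statement that fails for general elliptic boundary value problems (this is the Lopatinski--Shapiro / Boutet de Monvel story), and even granting that the TNBFT product structure makes it plausible, you would still need to check that the smoothing remainder is properly supported, lands in $\condfieldscs[1]$ rather than merely in smooth sections, and induces a continuous homotopy after applying $\innerhom{\condfields(U)^{\hotimes_\beta(k-1)}}{-}$. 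None of this is argued. The paper avoids constructing any global parametrix: it reduces to the free theory, identifies $\widetilde{\Obcl_{\sE,\sL}}$ Sym-degree by Sym-degree as an intersection $\Gamma_k=\bigcap_j\Upsilon_j$, proves the local statement only on "somewhat nice" opens (opens in $\mathring M$, where the Atiyah--Bott lemma applies, and collars $U'\times[0,\delta)$, where Proposition A.1 of \autocite{GRW} supplies an explicit homotopy inverse to $\condfieldscs(U)[1]\to\innerhom{\condfields(U)}{\RR}$), and then propagates to general opens by a \v{C}ech descent argument whose contracting homotopy --- multiplication by a partition of unity on $M^n$ --- is checked to descend to the boundary-conditioned functionals via the quotient description of Appendix \ref{sec: innerhom}. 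If you want to keep your route, the honest statement of what you must prove is the collar-open homotopy equivalence plus a gluing mechanism; as written, "once this boundary-compatible parametrix is in hand, the argument carries over verbatim" buries the entire analytic content of the theorem in an unproven assertion.
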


\begin{proof}
Again, the proof follows \autocite{CG2}.
The proof that $\widetilde{\Obcl_{\sE,\sL}}$ is a sub-prefactorization algebra of $\Obcl_{\sE,\sL}$ is identical to the one in \autocite{CG2}. One point requires comment, however: $\widetilde{\Obcl}$ is closed under the Chevalley-Eilenberg differential on $\Obcl_{\sE,\sL}$ because $(\condfieldscs[-1],\ell_l,\ip)$ is a precosheaf of cyclic $L_\infty$ algebras. Indeed, any functional with smooth first derivative is a sum of functionals of the form
\begin{equation}
I(e_1,\ldots,e_k) = \ip[D(e_1,\cdots, e_{k-1}), e_k],
\end{equation}  
where $D: \condfields^{\hotimes_\beta (k-1)} \to \condfieldscs[1]$ is a continuous map. One can check directly that applying the Chevalley-Eilenberg differential to such a functional gives another such functional.
It is important that $(\condfieldscs[-1],\ell_1,\ip)$ is a precosheaf of \emph{cyclic} $L_\infty$ algebras because this allows one to ``integrate by parts,'' i.e. use the equality
\begin{equation}
\ip[D(e_1,\ldots,e_{k-1}),\ell_1 e_k]=\pm\ip[\ell_1 D(e_1,\ldots, e_{k-1}),e_k]
\end{equation}
and its analogues for the higher brackets $\ell_2,\ell_3, \ldots$.

The construction of the shifted Poisson bracket is also identical to the construction in \autocite{CG2}. The Weiss cosheaf condition will be satisfied once we prove that $\widetilde{\Obcl_{\sE,\sL}}\to \Obcl_{\sE,\sL}$ is a quasi-isomorphism. Hence, statement (3) of the theorem is the only one that remains to be proved.

The essential ingredient in the proof of statement (3) is the fact that the inclusion 
\begin{equation}
\condfieldscs(U)[1]\to \condfields^\vee(U)
\end{equation}
is a quasi-isomorphism with a homotopy inverse for certain open subsets $U$, cf. Proposition A.1 of \autocite{GRW}.

Just as in \autocite{CG2}, we may assume that the theory is free. We let 
\begin{equation}
\Sigma^k: \innerhom{\condfields(U)^{\hotimes_\beta k}}{\RR}\to \innerhomsym{\condfields(U)^{\hotimes_\beta k}}{\RR}{k}
\end{equation}
denote the symmetrization map. We let $\Gamma_n$ denote $(\Sigma^{k})^{-1}\widetilde{\Obcl_{\sE,\sL}}(U)$. 
For each $j=1,\ldots, k$ Let $\Upsilon_j$ denote the following map:
\begin{align}
    \Upsilon_j &: \innerhom{\condfields(U)^{\hotimes_\beta(k-1)}}{\condfieldscs(U)[1]}\to \innerhom{\condfields(U)^{\hotimes_\beta k}}{\RR}\\
    \Upsilon_j& (J) = \ip[J(e_1,\ldots, e_{j},e_{j+1},\ldots, e_k),e_j].
\end{align}
We will abuse notation and use $\Upsilon_j$ to denote also the image of the map described in the above equation.
Just as in \autocite{CG2}, we can identify
\begin{equation}
\Gamma_k = \bigcap_{j=1}^{k} \Upsilon_j.
\end{equation}
It suffices to show that the inclusion 
\begin{equation}
\label{eq: incl}
\Gamma_k \hookrightarrow \innerhom{\condfields(U)^{\hotimes_\beta k}}{\RR}
\end{equation}
is an equivalence, since symmetrization is an exact functor. More generally, let $\{U_i\}_{i=1}^k$ be open subsets of $M$, and define $\Gamma_{k,\{U_i\}}$ by a similar intersection.

We will show that the natural inclusion 
\begin{equation}
\label{eq: incltwo}
\Gamma_{k,\{U_i\}}\hookrightarrow \innerhom{\bigotimes_{i=1}^k\condfields(U_i)}{\RR}
\end{equation}
is a quasi-isomorphism possessing a homotopy inverse when each $U_i$ is either contained entirely in $M\backslash \bdyM$ or is of the form $U'_i\times [0,\delta_i)$ for $U'_i$ open in $\bdyM$ (and using our fixed tubular neighborhood of $\bdyM$). We will call such $U_i$ ``somewhat nice.'' Let us explain why this proves that the inclusion in Equation \eqref{eq: incl} is a quasi-isomorphism. It follows from  the results of Appendix \ref{sec: innerhom} that 
\begin{equation}
\innerhom{\bigotimes_{i=1,i\neq j}^{i=k} \condfields(U_i)} {\condfieldscs(U_j)}
\end{equation}
is a quotient of 
\begin{equation}
\innerhom{\bigotimes_{i=1,i\neq j}^{i=k} \sE(U_i)} {\condfieldscs(U_j)}
\end{equation}
Now, let $V=U^n$. We may cover $V$ by products of somewhat nice sets in $M$. Let $\fV=\{U_i\}_{i\in I}$ be such an open cover. By taking the dual statement to that of Lemma A.5.7 of \autocite{CG1}, we find a contracting homotopy for the mapping cone of the map
\begin{equation}
\check{C}(\fV, (\sE^\vee)^{\hotimes_\beta k})\to (\sE^\vee(V))^{\hotimes_\beta k}.
\end{equation} 
This contracting homotopy involves only multiplication by smooth, compactly-supported functions on $M^n=M\times \cdots \times M$. 
We have seen that this cochain homotopy descends to one for the map
\begin{equation}
\check{C}\left(\fV, \innerhom{\condfields^{\hotimes_\beta k}(\cdot)}{\RR} \right)\to \innerhom{\condfields(V)^{\hotimes_\beta k}}{\RR}.
\end{equation} 
Using the explicit characterization of the spaces in the intersection defining $\Gamma_{k,\{U_i\}}$, the cochain homotopy also descends to one for $\Gamma_{k,\{U_i\}}$.

 Hence, we have a commuting diagram
\begin{equation}
\begin{tikzcd}
\check{C}(\fV, \Gamma_{k,\cdot}) \ar[r,"\sim"]\ar[d]& \Gamma_{k}\ar[d]\\
\check{C}\left(\fV, \innerhom{\condfields^{\hotimes_\beta k}(\cdot)}{\RR}\right) \ar[r,"\sim"]& \innerhom{\condfields^{\hotimes_\beta k}(V)}{\RR}
\end{tikzcd}
\end{equation}
where the top and bottom maps are quasi-isomorphisms. 
Here, we abuse notation slightly and let $\Gamma_{n,V'}$ denote $\Gamma_{n,\{U_i\}}$ when $V'=U_1\times \cdots U_n$. 
We are interested in showing that the right-hand arrow in the diagram is a quasi-isomorphism. 
The finite intersection of any number of products of somewhat nice subsets is again a product of somewhat nice sets. 
Therefore, if the map of Equation \eqref{eq: incltwo} is a quasi-isomorphism for $U$ somewhat nice, the left-hand map in the above commuting diagram will be a quasi-isomorphism. 
It follows that the map of \eqref{eq: incl} will be a quasi-isomorphism, since that map is also the right-hand map in the commuting diagram.

Let us now proceed to show that the map of Equation \eqref{eq: incl} is a quasi-isomorphism. To prove this, it suffices---just as in the corresponding proof in \autocite{CG2}---to show that the map
\begin{equation}
\condfieldscs(U_j)[1]\to \innerhom{\condfields(U_j)}{\RR}
\end{equation}
is a quasi-isomorphism when $U\cap \bdyM=\emptyset$ or when $U\cong U' \times [0,\delta)$. For $U\cap \bdyM=\emptyset$, this is the Atiyah-Bott lemma (Appendix D of \autocite{CG1}). For $U\cong U'\times [0,\delta)$, this is shown in Proposition A.1 of \autocite{GRW}
\end{proof}

\section{Examples}
\label{sec: classexamples}
In this section, we study three examples. The quantization of the first appears in \autocite{GRW}, the quantization of the second appears in Chapter 5 of \autocite{ERthesis}, and the quantization of the third is the subject of present work. 

\subsection{Topological Mechanics}
The goal of this section is to study the factorization algebra for topological mechanics. 
Recall from Examples \ref{ex: toplmech} and \ref{ex: toplmechbc} that a symplectic vector space $V$ and a Lagrangian $L\subset V$ define a free bulk-boundary system on $\R_{\geq 0}$ known as topological mechanics. 
The procedure of the previous section constructs a factorization algebra $\Obcl_{V,L}$ on $\R_{\geq 0}$ for these choices. Our goal is to study this factorization algebra.

Given an associative algebra $A$ and a right $A$-module $M$, there is a factorization algebra $\cF_{A,M}$ on $\R_{\geq 0}$ which assigns $A$ to any open interval, and $M$ to any interval containing $0$ (see \S 3.3.1 of \autocite{CG1} for details). 
The structure maps are determined by the multiplication in $A$ and the right-module action of $A$ on $M$. 
We will see that the cohomology factorization algebra of topological mechanics is isomorphic to one of the form $\cF_{A,M}$, for appropriate $A$ and $M$. 

Let $\sO(V)=\Sym(V^\vee)$ denote the symmetric algebra of polynomial functions on $V$, and similarly for $\sO(L)$. The inclusion $L\to V$ induces a restriction of functions map $\sO(V)\to \sO(L)$ which defines a right $\sO(V)$-module structure on $\sO(L)$. 

We would like to say that $\Obcl_{V,L}$ is equivalent to $\cF_{\sO(V),\sO(L)}$; however, $\Obcl_{\sE,\sL}$ is defined in terms of a space of power series on $\condfields$, while $\sO(V)$ and $\sO(L)$ are polynomial algebras. 
To remedy this, one may also consider, for each $U$, the space $\Obcl_{V,L,poly}(U)$ consisting only of polynomial functions on $\condfields(U)$. It is easy to verify that $\Obcl_{V,L,poly}$ forms a sub-factorization algebra of $\Obcl_{V,L}$.

\begin{lemma}
The factorization algebra $\Obcl_{V,L,poly}$ is quasi-isomorphic to the factorization algebra $\cF_{\sO(V),\sO(L)}$. 
\end{lemma}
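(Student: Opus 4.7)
The plan is to compute the cohomology of $\condfields(U)$ explicitly on a basis of ``nice'' opens of $\RR_{\geq 0}$, and then dualize and take symmetric algebras to identify the cohomology of $\Obcl_{V,L,poly}$ with the factorization algebra $\cF_{\sO(V),\sO(L)}$ in a structure-map-respecting way.

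First, I would fix the two classes of open intervals needed: intervals $U \subset \RR_{>0}$ away from the boundary, and intervals $U = [0,b)$ containing the boundary point. On such $U$, $\condfields(U)$ is the strict pullback of $\sE(U) = \Omega^\bullet(U)\otimes V$ along the boundary condition $L\subset V$, so a field is $\phi_0 + \phi_1\,\dt$ with $\phi_0(0)\in L$ when $0\in U$. Using the contracting homotopy $K(\phi)(t) = (-1)^{|\phi_1|+1}\int_t^{b}\phi_1(s)\,\d s$ (this is the same style of homotopy used in the proof of Lemma~\ref{lem: fieldslagrangian}), I would show that $\condfields(U)$ is quasi-isomorphic, via an explicit deformation retract, to the space of constant fields: $V$ for $U\subset \RR_{>0}$, and $L$ for $U = [0,b)$ (the boundary condition eats exactly the constants that are not in $L$). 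Both the projection $\Psi_U$ (evaluation at the appropriate endpoint) and the inclusion $\Phi_U$ (constant sections) are continuous linear, so they dualize to maps between polynomial observables and compute the cohomology:
\begin{equation}
H^\bullet\bigl(\Obcl_{V,L,poly}(U)\bigr) \;\cong\; \begin{cases} \sO(V) & 0\notin U, \\ \sO(L) & 0\in U. \end{cases}
\end{equation}

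Next I would check that these isomorphisms intertwine the prefactorization structure maps, at the level of cohomology, with those of $\cF_{\sO(V),\sO(L)}$. The key observation is that the retracts $\Phi_U$ are compatible with restrictions in the sense that a constant section $v$ on a large interval restricts to the same constant $v$ on any subinterval. Consequently, for disjoint opens $U_1<\cdots<U_k\subset V$, the map
\begin{equation}
H^\bullet\Obcl_{V,L,poly}(U_1)\otimes\cdots\otimes H^\bullet\Obcl_{V,L,poly}(U_k)\longrightarrow H^\bullet\Obcl_{V,L,poly}(V)
\end{equation}
is simply pointwise multiplication of polynomials pulled back along the diagonal $\sO(W)\to \sO(W)^{\otimes k}$ for the appropriate $W\in\{V,L\}$. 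When all intervals avoid the boundary, this gives the multiplication on $\sO(V)$; when $0\in U_1$ (and hence $0\in V$), the input on $U_1$ lies in $\sO(L)$ while the inputs on $U_2,\ldots,U_k$ lie in $\sO(V)$, and the target lies in $\sO(L)$, with the map given by restricting the $\sO(V)$-factors to $L$ and multiplying inside $\sO(L)$. This is precisely the right $\sO(V)$-module structure on $\sO(L)$ induced by $L\hookrightarrow V$, which matches $\cF_{\sO(V),\sO(L)}$ by construction.

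Finally, since both factorization algebras satisfy the Weiss cosheaf condition (Theorem~\ref{thm: classobsformFA} for $\Obcl_{V,L,poly}$, and for $\cF_{\sO(V),\sO(L)}$ by the argument in \S3.3.1 of \autocite{CG1}), and the map I have constructed is a quasi-isomorphism on the basis of opens described above, one concludes that the induced map of factorization algebras is a quasi-isomorphism everywhere by a standard \v{C}ech descent argument. The main obstacle, I expect, is bookkeeping: one must produce the homotopy equivalences $\Phi_U, \Psi_U$ in a way that is natural enough to assemble into an actual morphism of prefactorization algebras (as opposed to just a levelwise quasi-isomorphism), which forces one to make consistent choices on the intersection patterns arising in the Weiss covers. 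Once that is set up, the compatibility of multiplication is essentially automatic from the fact that all the maps in sight are induced by evaluation at a single point and by the inclusion $L\hookrightarrow V$.
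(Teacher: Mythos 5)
Your proposal is correct and rests on the same computation as the paper's proof: the $\sL$-conditioned fields deformation-retract onto constant sections, yielding $V$ on intervals away from $0$ and $L$ on intervals containing $0$, and the induced structure maps on observables reproduce the algebra $\sO(V)$ acting on the module $\sO(L)$. The one "obstacle" you flag at the end is resolved in the paper more cleanly than you anticipate: rather than strictifying evaluation maps over Weiss covers, one defines $\cS$ as the sheafification of the presheaf of constants and uses only the (automatically natural) inclusion $\cS\to\condfields$ to get a genuine map of factorization algebras $\Obcl_{V,L,poly}\to\sO(\cS)=\cF_{\sO(V),\sO(L)}$, with the point-evaluation maps serving solely to verify open-by-open that this inclusion is a quasi-isomorphism.
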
 

\begin{proof}
Let us choose a Lagrangian complement $L'$ to $L$ in $V$.
The sheaf of $\sL$-conditioned fields is
\begin{equation}
 \condfields(U) = \Omega^\bullet_M(U)\otimes L\oplus \Omega^\bullet_{M,D}(U)\otimes L'.
\end{equation}
Here, $\Omega^\bullet_{M,D}(U)$ is $\Omega^\bullet_M(U)$ if $U$ does not contain $t=0$ and otherwise is the space of de Rham forms on $U$ whose pullback to $t=0$ vanishes.
Let $\cS_{pre}$ be the presheaf of vector spaces on $M$ which assigns $V$ to any open set not containing 0 and $L$ to any open set which does contain 0.
Let $\cS$ denote the sheafification of $\cS_{pre}$.
There is a natural map of sheaves $\cS\to \condfields$ and it is straightforward to show that this map is a quasi-isomorphism, for example by extending this map to a homotopy equivalence on connected intervals.
Hence, there is also a quasi-isomorphism
\begin{equation}
    \Obcl \to \sO(\cS).
\end{equation}
One quickly verifies that $\sO(\cS)=\cF_{\sO(V),\sO(L)}$.
This completes the proof.
\end{proof}

\subsection{BF Theory in One Dimension}

In this section, we study the factorization algebra of observables of one-dimensional BF theory with $B$ boundary condition (cf. Examples \ref{ex: bf} and \ref{ex: bfbdycond}).

Namely, we show:

\begin{proposition}
\label{prop: 1dbfclass}
Let $A$ denote the algebra
\[
C^\bullet(\fg, \Sym(\fg[1]))
\]
and 
$M$ the right $A$-module
\[
\Sym(\fg[1]).
\]
There is a quasi-isomorphism of factorization algebras
\begin{equation}
    \Upsilon: \Obcl_{\sE,\sL}\to \cF_{A,M}
\end{equation}
on $\RR_{\geq 0}$, where $\Obcl_{\sE,\sL}$ is the factorization algebra of observables for BF theory with $B$ boundary condition.
\end{proposition}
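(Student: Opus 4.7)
The plan is to mimic the proof of the previous subsection (the topological-mechanics case) by producing an $L_\infty$-subsheaf $\cS \hookrightarrow \condfields$ of ``constant'' fields which (i) is a quasi-isomorphism on a factorizing basis of $\RR_{\geq 0}$, and (ii) satisfies $\sO(\cS) \cong \cF_{A,M}$ as prefactorization algebras. The map $\Upsilon$ is then the composite $\Obcl_{\sE,\sL} = \sO(\condfields) \to \sO(\cS) \cong \cF_{A,M}$ induced (contravariantly) by the inclusion $\cS \hookrightarrow \condfields$.

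Concretely, I would set
\begin{equation*}
\cS(U) := \begin{cases} \fg[1]\oplus \fg^\vee[-1] & 0\notin U,\\ \fg^\vee[-1] & 0\in U,\end{cases}
\end{equation*}
embedded in $\sE(U) = \Omega^\bullet(U)\otimes \fg[1] \oplus \Omega^\bullet(U)\otimes \fg^\vee[-1]$ as constant $0$-form sections. The $B$ boundary condition (the $\fg[1]$-part of a field vanishes at $t=0$) forces constants in the $\fg[1]$-summand to vanish when $0 \in U$, which explains the asymmetry. One readily checks that $\cS[-1]$ is a sub-$L_\infty$-algebra of $\condfields[-1]$: the linear part $\ell_1$ is the de Rham differential (zero on constants), and the only higher bracket $\ell_2$ is built from the wedge product of forms (trivial on pairs of $0$-forms) together with the natural pairing $\fg\otimes \fg^\vee \to \fg^\vee$, so it preserves $\cS$.

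To see that $\cS \hookrightarrow \condfields$ is a quasi-isomorphism on intervals, I would produce explicit contracting homotopies. On $U \subset (0,\infty)$ one uses a standard Poincar\'e-type homotopy based at any interior point $t_0 \in U$. For $U = [0,\delta)$, I integrate from $t = 0$: on the $\fg^\vee[-1]$-summand (no constraint) this contracts $\Omega^\bullet(U)\otimes \fg^\vee[-1]$ onto its value at $t=0$; on the $\fg[1]$-summand, the boundary condition $\alpha_0(0)=0$ makes the homotopy preserve the constrained subspace and exhibits it as acyclic, matching the definition of $\cS(U)$. Because the cohomology of $\cS(U)$ is degree-wise finite-dimensional, the CE cochain functor $\sO$ sends the resulting $L_\infty$ quasi-isomorphism to a quasi-isomorphism $\Obcl_{\sE,\sL}(U) \to \sO(\cS(U))$; since open intervals and half-open intervals of the form $[0,\delta)$ form a factorizing basis of $\RR_{\geq 0}$, and both sides satisfy the Weiss cosheaf condition (for $\Obcl_{\sE,\sL}$ this is Theorem \ref{thm: classobsformFA}), this upgrades to a quasi-isomorphism of factorization algebras.

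It remains to identify $\sO(\cS)$ with $\cF_{A,M}$. For $U \subset (0,\infty)$, $\cS(U)[-1] = \fg \ltimes \fg^\vee[-2]$ (coadjoint semidirect product), whose Chevalley--Eilenberg cochain complex is precisely $C^\bullet(\fg, \Sym(\fg[1])) = A$; for $U \ni 0$, $\cS(U)[-1] = \fg^\vee[-2]$ is abelian with cochain complex $\Sym(\fg[1]) = M$. The restriction map $\cS(V) \to \cS(U)$ is the identity when both $U$ and $V$ miss the origin and the projection $\fg[1]\oplus \fg^\vee[-1] \to \fg^\vee[-1]$ when $0\in V$ and $0\notin U$. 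Dualizing and taking symmetric powers, the first yields the commutative multiplication $A\otimes A \to A$ and the second yields the map $A \to M$ that augments away the Chevalley--Eilenberg factor, which is exactly the right $A$-module structure on $M$ defining $\cF_{A,M}$. I expect the main bookkeeping obstacle to be pinning down the shifts and sign conventions well enough to identify $C^\bullet(\fg \ltimes \fg^\vee[-2])$ with $A = C^\bullet(\fg, \Sym(\fg[1]))$ on the nose, and to verify that the $A$-module structure on $M$ induced geometrically coincides with the one implicit in the proposition's statement.
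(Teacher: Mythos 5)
Your proposal follows essentially the same route as the paper: embed a sheaf of ``constant'' fields $\cS$ into $\condfields$ (the paper takes the sheafification of your presheaf, which only matters on disconnected opens), check that the inclusion is an open-by-open quasi-isomorphism of sheaves of $L_\infty$ algebras on a factorizing basis, and identify the Chevalley--Eilenberg cochains of $\cS$ with $\cF_{A,M}$. The only slip is cosmetic: for $U\subset V$ with $0\in V$ and $0\notin U$, the restriction $\cS(V)\to\cS(U)$ is the inclusion $\fg^\vee[-1]\hookrightarrow \fg[1]\oplus\fg^\vee[-1]$ rather than a projection, but this dualizes to the augmentation $A\to M$ exactly as you conclude.
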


\begin{proof}
Let $\cS$ denote the sheafification of the following presheaf on $\RR_{\geq 0}$:
\begin{equation}
    \cS_{pre}(U) = \left\{
    \begin{array}{lr}
    \fg\ltimes \fg^\vee[-2]     & U\cap \{0\}=\emptyset  \\
    \fg^\vee[-2] & U \cap \{0\} = \{0\}     
    \end{array}
    \right.
\end{equation}
As we have written it, $\cS$ is naturally a sheaf of graded Lie algebras on $\RR_{\geq 0}$.
The map 
\begin{equation}
\label{eq: 1dbfclassquasiisooffields}
    \cS \to \condfields[-1]
\end{equation}
which includes elements of $\fg\ltimes \fg^\vee[-1]$ as constant functions is a map of sheaves of dg Lie algebras; in fact, it is straightforward to show that it is a quasi-isomorphism.
An inverse quasi-isomorphism
\begin{equation}
    \condfields(U)[-1]\to \cS(U),
\end{equation}
for $U$ a connected open subset, is obtained by pulling back along the inclusion of a point $t\in U$ into $U$.
(This inverse does not define a map of sheaves of dg Lie algebras, however. The inverse map is simply used to verify that the original map is, open-by-open on $\RR_{\geq 0}$, a quasi-isomorphism.)
Let $C^\bullet(\cS)$ be the factorization algebra on $\RR_{\geq 0}$ which, to an open $U\subset \RR_{\geq 0}$, assigns the Chevalley-Eilenberg cochain complex of $\cS(U)$ (cf. Definition 3.6.1 of \autocite{CG1}, though the construction here is slightly different).
The map of Equation \ref{eq: 1dbfclassquasiisooffields} induces a quasi-isomorphism
\begin{equation}
    \Obcl_{\sE,\sL}\to C^\bullet(\cS);
\end{equation}
it is straightforward to verify that~$C^\bullet(\cS)\cong \cF_{A,M}$.
This completes the proof.
\end{proof}

\subsection{BF Theory in Higher Dimensions}
We may also extend the discussion for BF theory above to $\HH^{n+1}$ for positive $n$.
In that case, let $\pi: \HH^{n+1}\to \RR^n$ be the projection onto the boundary, and let $\mathring{\HH}^{n+1}$ denote the interior of the upper half-space.
We will describe $\pi_*\left( \Obcl\right)$ and $\pi_*\left(\left.\Obcl\right|_{\mathring{\HH}^{n+1}}\right)$ for BF theory with B boundary condition.
To this end, we need candidate factorization algebras on $\RR^n$.
That is the purpose of the following definition:

\begin{definition}
Let $\cF^{cl}_\del$ denote the factorization algebra on $\RR^n$ which assigns, to an open subset $U$, the cochain complex
\[
\cF^{cl}_\del(U):=\left(\Sym\left(\Omega^\bullet_{c,\RR^n}(U)\otimes \fg[1]\right),d_{dR}\right),
\]
with factorization structure maps arising in an analogous way to those for the classical observables of a bulk-boundary system.
Note that $\Omega^\bullet_{c,\RR^n}$---and by extension $\cF^{cl}_\del$---is naturally a pre-cosheaf of dg-modules for the sheaf of Lie algebras $\Omega^\bullet_{\RR^n}\otimes \fg$.
Therefore, we let $\cF^{cl}_{blk-bdy}$ denote the factorization algebra on $\RR^n$ which assigns, to an open subset $U$, the cochain complex
\[
\cF^{cl}_{blk-bdy}(U):=C^\bullet(\Omega^\bullet_{\RR^n}(U)\otimes \fg, \cF^{cl}_\del(U)),
\]
with structure maps induced using the natural commutative product on these chain complexes.
\end{definition}

Now, we can state and prove the main result of this subsection:

\begin{lemma}
Let $(\sE,\sL)$ denote the bulk-boundary system consisting of BF theory on $\HH^{n+1}$ with $B$ boundary condition. There are equivalences of factorization algebras
\begin{align*}
\cF^{cl}_{\del} &\to \pi_*(\Obcl_{\sE,\sL}),\\
\cF^{cl}_{blk-bdy}&\to \pi_*\left(\left. \Obcl_{\sE,\sL}\right|_{\mathring{\HH}^{n+1}}\right)
\end{align*}
on $\RR^n$.
\end{lemma}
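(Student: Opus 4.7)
The strategy is the same as in the proof of Proposition \ref{prop: 1dbfclass}. I would construct a small model of the pushforward of fields on $\RR^n$, in the form of a sheaf of dg Lie algebras $\sK$, and then translate the assertion by taking Chevalley--Eilenberg cochains. For the first equivalence, set
\[
\sK_\partial(U) := \Omega^\bullet(U) \otimes \fg^\vee[n-2],
\]
regarded as a sheaf of abelian dg Lie algebras on $\RR^n$ with only the de Rham differential. The ``restrict $B$, kill $A$'' projection
\[
r_\partial : \condfields(U \times \RR_{\geq 0})[-1] \longrightarrow \sK_\partial(U), \qquad (A, B) \longmapsto \iota^* B
\]
(where $\iota : U \times \{0\} \hookrightarrow U \times \RR_{\geq 0}$) is a strict map of sheaves of dg Lie algebras: the $B$-boundary condition forces $\iota^* A = 0$, so that $\iota^*[A,A] = 0$ and $\iota^*[A,B] = [\iota^* A, \iota^* B] = 0$. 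It is a quasi-isomorphism, since surjectivity is realized via $\beta \mapsto (0, \pi^* \beta)$ while the kernel---a direct sum over the $\fg$- and $\fg^\vee[n-2]$-summands of the relative de Rham complex of the contractible pair $(U \times \RR_{\geq 0}, U \times \{0\})$---is acyclic via a contracting homotopy of the form $\alpha_0 + \alpha_1 \wedge \mathrm{d}t \mapsto \int_0^t \alpha_1(s)\, \mathrm{d}s$, analogous to the one built in Lemma \ref{lem: fieldslagrangian}.

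Applying the contravariant Chevalley--Eilenberg functor yields a quasi-isomorphism $C^\bullet(\sK_\partial) \to C^\bullet(\pi_* \condfields[-1]) = \pi_* \Obcl_{\sE, \sL}$. To identify $C^\bullet(\sK_\partial(U))$ with $\cF^{cl}_\partial(U)$, I would use the Atiyah--Bott quasi-isomorphism $\Omega^\bullet(U)^\vee \simeq \Omega^\bullet_c(U)[n]$ of Appendix D of \autocite{CG1} together with the shift identity $(\fg^\vee[n-2])^\vee[-1] \simeq \fg[1-n]$ to realize
\[
C^\bullet(\sK_\partial(U)) \simeq \widehat{\mathrm{Sym}}(\Omega^\bullet_c(U) \otimes \fg[1]);
\]
the inclusion $\cF^{cl}_\partial(U) = \mathrm{Sym}(\Omega^\bullet_c(U) \otimes \fg[1]) \hookrightarrow \widehat{\mathrm{Sym}}(\Omega^\bullet_c(U) \otimes \fg[1])$ is a quasi-isomorphism because the de Rham differential preserves symmetric degree and, in each fixed cohomological degree, only finitely many symmetric powers contribute.

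For the second equivalence I would proceed analogously with the semidirect-product sheaf of dg Lie algebras
\[
\sK'(U) := \Omega^\bullet(U) \otimes \fg \oplus \Omega^\bullet(U) \otimes \fg^\vee[n-2]
\]
(with $\fg$ acting on $\fg^\vee$ by coadjoint). A choice of section of $\pi|_{\mathring{\HH}^{n+1}}$ induces a strict pullback $r' : \sE(U \times (0, \infty))[-1] \to \sK'(U)$, which is a quasi-isomorphism by contractibility of $(0, \infty)$; different sections produce canonically homotopic maps. The standard decomposition $C^\bullet(\sK'(U)) \simeq C^\bullet(\Omega^\bullet(U) \otimes \fg\, ;\, C^\bullet(\Omega^\bullet(U) \otimes \fg^\vee[n-2]))$ for a semidirect product of dg Lie algebras, combined with the identification of the inner complex already established, then gives $C^\bullet(\sK'(U)) \simeq \cF^{cl}_{blk-bdy}(U)$. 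Compatibility with the factorization structure is automatic from the naturality of $r_\partial$ and $r'$ in $U$. The main obstacle is the functional-analytic bookkeeping---verifying that the contracting homotopy above and the $\mathrm{Sym}$-versus-$\widehat{\mathrm{Sym}}$ identification take place in the category $\DVS$ (or $\CVS$)---and this is the step where I would reuse the results of the appendix together with the techniques from the proof of Theorem \ref{thm: P0classobsFA}.
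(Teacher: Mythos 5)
Your proposal follows essentially the same route as the paper: the paper also contracts the normal direction via the pair of maps ``pull back the $B$-field to the boundary, kill the $A$-field'' and ``pull back along $\pi$,'' identifies the resulting abelian (resp.\ semidirect-product) dg Lie algebra model, and concludes with the Atiyah--Bott quasi-isomorphism $\Omega^\bullet_{c,\RR^n}\otimes\fg[1]\to\underline{\CVS}(\Omega^\bullet_{\RR^n}\otimes\fg^*[n-2],\RR)$, noting for the second equivalence that this map is $\Omega^\bullet_{\RR^n}(U)\otimes\fg$-equivariant; your write-up just makes explicit the contracting homotopy and the CE-cochain bookkeeping that the paper compresses into ``applying the techniques of the previous examples in the normal direction.''

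One justification you give is not right as stated: the claim that in $\widehat{\Sym}(\Omega^\bullet_c(U)\otimes\fg[1])$ only finitely many symmetric powers contribute to each cohomological degree fails at the cochain level (degree-$0$ elements of $\Omega^\bullet_c(U)\otimes\fg[1]$ exist, so every $\Sym^k$ contributes to degree $0$), and on cohomology it holds for a ball only when $n\geq 2$, since $H^\bullet_c(U)\otimes\fg[1]\simeq\fg[1-n]$ sits in degree $n-1$. The correct argument passes to cohomology first (the differential preserves symmetric degree) and then checks the finiteness condition there on a Weiss basis of disjoint unions of balls; the paper itself is silent on the $\Sym$-versus-$\widehat{\Sym}$ discrepancy between $\cF^{cl}_\del$ and $\Obcl$, so this is a point where your proof attempts more than the paper does but with a faulty reason.
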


\begin{remark}
Before we embark on the proof of the Lemma, let us remark on a number of natural questions that remain unanswered here.
First, we note that $\cF^{cl}_\del$ has a natural $P_0$ structure which is given on the linear elements $\Omega^\bullet_{c,\RR^n}\otimes \fg[1]$ by the natural Lie bracket on this space of linear elements.
On the other hand, we have seen that $\widetilde \Obcl_{\sE,\sL}$, which is equivalent to $\Obcl_{\sE,\sL}$ as a factorization algebra, also possesses a $P_0$ structure.
One may ask whether, in a suitable sense, the first quasi-isomorphism of the lemma is an equivalence of $P_0$ factorization algebras.
We expect an affirmative answer to this question.
However, to articulate the ``suitable sense'' in which the map of the lemma is a map of $P_0$ prefactorization algebras requires more technology that we have developed here.
Namely, we expect the map of the lemma to give rise to an isomorphism in the homotopy category of prefactorization algebras valued in $P_0$ algebras.
To make the desired statement, we would need to use machinery from the homotopy theory of algebras over colored operads \autocite{bergermoerdijk}.

In the terminology of Butson and Yoo, BF theory on $\HH^{n+1}$ is the ``universal bulk theory'' for the ``degenerate'' theory whose classical observables are $\cF^{cl}_\del$.
(The terminology stems from the---to our knowledge---conjectural status of the observables $\pi_*\left(\left.\Obcl_{sE,\sL}\right|_{\mathring{H}^{n+1}}\right)$ as the open-by-open $P_0$ center of $\cF^{cl}_\del$ \autocite{safronovlectures}.)
An equivalence of $P_0$ structures between $\cF^{cl}_\del$ and $\pi_*\left(\Obcl_{\sE,\sL}\right)$ would lend more weight to the term ``universal bulk theory.''

The second natural question to ask is the following: it is easy to show that $\cF^{cl}_\del(\RR^n)\simeq \Sym(\fg[1-n])$.
As we have remarked, $\cF^{cl}_\del$ is a locally-constant prefactorization algebra on $\RR^n$ in the category of $P_0$ algebras. 
By a result of Lurie \autocite{higheralgebra}, we may therefore think of $\cF^{cl}_\del$ as an $E_n$-algebra in the category of $P_0$ algebras.
Furthermore, Safronov \autocite{safronovadditivity} has shown that the infinity-category of $E_n$-algebras in the category of $P_0$ algebras is equivalent to the infinity category of $P_n$-algebras.
Hence, $\cF^{cl}_\del$ may be thought of as a $P_n$-algebra.
On the other hand, $\Sym(\fg[1-n])$ has a natural $P_n$ structure, whose Poisson bracket is given on the linear elements by the Lie bracket on $\fg$.
It is natural to ask whether these two $P_n$ structures on equivalent chain complexes coincide.
We again expect an affirmative answer, though this has not been explicitly shown.
\end{remark}

\begin{proof}[Proof of Lemma]
For any open subset of $\HH^{n+1}$ of the form $U\times \RR_{\geq 0}$, we note that we may write the space of fields for BF theory in the form
\begin{align*}
\condfields(U&\times \RR_{\geq 0})\cong \\
& \Omega^\bullet_{\RR^n}(U)\hotimes_\beta \Omega^\bullet_{\RR_{\geq 0},D}(\RR_{\geq 0})\hotimes_\beta \fg[1]\oplus \Omega^\bullet_{\RR^n}(U)\hotimes_\beta \Omega^\bullet_{\RR_{\geq 0}}(\RR_{\geq 0})\hotimes_\beta \fg^*[n-2];
\end{align*}
by applying the techniques of the previous examples in the normal direction, we obtain natural pairs of quasi-isomorphisms, one for each open subset $U\subset \RR^n$,
\[
\condfields(U\times \RR_{\geq 0}) \leftrightarrow \Omega^\bullet_{\RR^n}(U)\otimes \fg^*[n-2].
\]
The rightward-pointing arrow is pullback to the boundary on the $B$ fields and $0$ on the $A$ fields; the leftward-pointing arrow is the pullback along $\pi$.
Combining this observation with the Atiyah-Bott equivalence
\[
\Omega^\bullet_{\RR^n, c}\otimes \fg[1] \to \underline{\CVS}(\Omega^\bullet_{\RR^n}\otimes \fg^*[n-2],\RR),
\]
we obtain the first quasi-isomorphism of the lemma.

The second quasi-isomorphism is proved similarly, noting that the Atiyah-Bott quasi-isomorphism is an equivalence of $\Omega^\bullet_{\RR^n}(U)\otimes \fg$ representations.
\end{proof} 


\addtocontents{toc}{\vspace{2em}} 

\appendix 

\section{A Brief Primer on Differentiable Vector Spaces}
\label{sec: DVS}
In this Appendix, we provide a brief review of the notions concerning differentiable and convenient vector spaces that we use in the body of the article.
Our main goal is to give the reader enough familiarity with the basic properties of the categories $\DVS$ and $\CVS$ of differentiable and convenient vector spaces, respectively, to understand the proofs in the main text.
We hope also that this Appendix provides some explanation as to why certain points of functional analysis are belabored in the main text.
To this end, this appendix follows---as opposed to a ``definition-lemma-theorem'' format---an ``informal statement of properties-commentary-informal statement of properties'' format.

As we have remarked in the introduction, we would like our factorization algebras to take values in a symmetric monoidal category of chain complexes of infinite-dimensional vector spaces.
We have already remarked that we would like to find an abelian category of infinite-dimensional vector spaces.
Let us discuss the desired symmetric-monoidality of our category a bit further.
Typically, the vector spaces we consider are infinite-dimensional, and endowed with topologies; the natural tensor products in this context have universal properties in categories of topological vector spaces where the morphisms are continuous linear maps.
For example, if $E\to M$ is a vector bundle, then the space of sections $\cinfty(M;E)$ possesses the so-called Whitney topology.
There is a natural tensor product on a subcategory of the category of all topological vector spaces, called the \emph{completed projective tensor product} and denoted $\hotimes_\pi$, with the property that for $F\to N$ another vector bundle, the equation 
\begin{equation}
\label{eq: cptp}
\cinfty(M; E)\hotimes_\pi \cinfty(N; F)\cong \cinfty(M\times N; E\boxtimes F)
\end{equation}
holds, where $E\boxtimes F$ is the external tensor product of vector bundles.
Such an equation simply does not hold if one takes the algebraic tensor product of the two spaces of sections.

Based on this discussion and the definition of a factorization algebra, we seek a category $\cC$ of vector spaces such that
\begin{enumerate}
\item $\cC$ is an abelian category,
\item $\cC$ is endowed with a tensor product in which equations like Equation \eqref{eq: cptp} hold.
\end{enumerate}
One standard category meant to encode the theory of infinite-dimensional vector spaces---namely, that of locally convex topological vector spaces---is infamously non-abelian. 
Moreover, we would like Equation \eqref{eq: cptp} to hold even if $M$ and $N$ are non-compact, and one takes compactly-supported sections throughout.
This, too, fails to obtain in the category of locally convex topological vector spaces with the completed projective tensor product.

The approach we follow here is to consider the category of differentiable vector spaces $\DVS$, as discussed in Appendix B of \autocite{CG1}.
The category $\DVS$ has the following properties:
\begin{enumerate}
\item It is abelian.
\item It has a multi-category structure.
\item It possesses an enrichment over itself, i.e. one can form a differentiable vector space $\underline{\DVS}(V,W)$ of linear maps between two differentiable vector spaces.
\end{enumerate}

Very briefly, a differentiable vector space $V$ is a sheaf of vector spaces $\cinfty(\cdot, V)$ (we will use both notations interchangeably to represent $V$) on the site of smooth manifolds.
Moreover $\cinfty(\cdot, V)$ must be endowed with the structure of a module over the sheaf of rings $\cinfty$ and a flat connection
\begin{equation}
\nabla: \cinfty(\cdot, V) \to \cinfty(\cdot, V)\otimes_{\cinfty} \Omega^1(\cdot).
\end{equation}
For any DVS $V$, the vector space $\cinfty(\{*\}, V)$ encodes the underlying vector space of $V$, and $\cinfty(M,V)$ encodes the space of smooth maps from $M$ to $V$.
For example, given a vector bundle $E\to M$, the differentiable vector space $\cinfty(M; E)$ is given by the assignment
\begin{equation}
\cinfty(X,\cinfty(M;E)):= \cinfty(X\times M; \pi_2^* E),
\end{equation}
where $\pi_2: X\times M\to M$ is the natural projection.
(Note that we use the notation $\cinfty(M;E)$ to denote global sections of $E\to M$, and the notation $\cinfty(X,V)$ to denote smooth maps from $X$ to $V$.)
We refer the reader to the aforementioned Appendix of \autocite{CG1} for further details.

The category $\DVS$ has all of the categorical properties that we desired in a category of infinite-dimensional vector spaces.
Nevertheless, it is somewhat difficult to work with differentiable vector spaces directly.
Hence, Costello and Gwilliam study another category of infinite-dimensional vector spaces, namely the category of \emph{convenient} vector spaces $\CVS$.
The category $\CVS$ possesses the following properties:
\begin{enumerate}
    \item The objects in $\CVS$ are vector spaces endowed with additional structure (namely a \emph{bornology}), and morphisms are spaces of bounded linear maps.
    \item The category $\CVS$ possesses all limits and colimits.
    \item The category $\CVS$ is closed symmetric monoidal. The symmetric monoidal structure on $\CVS$ is denoted by the symbol $\hotimes_\beta$, and the inner hom object is denoted $\underline{\CVS}(V,W)$.
    \item Vector spaces of smooth, compactly-supported, distributional, and compactly-supported distributional sections of a vector bundle $E\to M$ can be described by objects in $\CVS$.
    \item There is a functor $dif: \CVS\to \DVS$ which preserves all limits. Moreover, the functor preserves inner hom objects, and the symmetric monoidal product $\hotimes_\beta$ represents the multi-category structure on~$\DVS$.
    \item Equation \eqref{eq: cptp} holds, with $\hotimes_\beta$ replacing $\hotimes_\pi$, both for spaces of sections of a bundle $E\to M$ and spaces of compactly-supported sections of $E$.
    \item The functor $dif$ embeds $\CVS$ as a full subcategory of $\DVS$.
\end{enumerate}
Again, we refer the reader to Appendix B of \autocite{CG1} for further details.
It is far easier to work directly with convenient vector spaces than with differentiable vector spaces.
However, the category $\CVS$ is not abelian, so one may not use classical homological algebra to study chain complexes of convenient vector spaces.
Further, the functor $\CVS\to \DVS$ does not preserve colimits, so in the computation of homology groups, it matters whether a complex 
\[
\cdots \to V_i \to V_{i+1}\to \cdots
\]
of convenient vector spaces is considered to be a complex of convenient or differentiable vector spaces via the functor~$dif$.
We adopt the following conventions, which we follow implicitly throughout the article.
First, nearly all of our chain complexes of differentiable vector spaces will arise from chain complexes of convenient vector spaces.
To verify that a map of complexes $dif(f):dif(A^\bullet) \to dif(B^\bullet)$ is a quasi-isomorphism, we never compute the cohomology groups of the underlying complexes of convenient vector spaces $A^\bullet, B^\bullet$ directly.
Instead, we allow ourselves only to use some combination of the following tools:
\begin{enumerate}
    \item Construction of a homotopy inverse for $f$, in which case both $f$ and $dif(f)$ are quasi-isomorphisms by functoriality.
    \item Use of standard homological-algebraic techniques in $\DVS$, such as spectral sequences and the snake lemma.
\end{enumerate}
This allows us to avoid computing cohomology groups directly in $\DVS$, and instead to perform explicit computations in $\CVS$, doing so in a way that is ``kosher.''

In the next section, we will exhibit a special instance of this strategy: namely, in order to compute a quotient $V/W$, we will provide a splitting $V\cong W\oplus U$. Then, $U\cong V/W$ holds both in $\CVS$ and $\DVS$.

\section{The duals of function spaces with boundary conditions}
\label{sec: innerhom}
In the body of the text, we have defined observables in terms of an inner hom of differentiable vector spaces.
The goal of this appendix is to give an explicit characterization of these inner homs.

Let $V\to M$ be a vector bundle on a manifold with boundary $M$. 
Let $L\subset V|_\bdyM$ be a sub-bundle of the restriction of $V$ to the boundary.
We will let $\sV$ and $\sL$ denote---in a slight departure of convention from the main body of the text---the spaces of global sections of $V$ and $L$ over their respective bases.
We let $\sV_L$ denote the space of sections of $V$ over $M$ which lie in $L$ when restricted to the boundary.
We establish a similar notation, $\sV_{L,c}$, for the space of such sections which have compact support on $M$.
With the end of understanding the inner hom spaces
\begin{equation}
\label{eq: innerhom1}
\innerhom{\sV_L}{\RR}
\end{equation}
and 
\begin{equation}
\label{eq: innerhom2}
 \innerhom{\sV_{L,c}}{\RR}
\end{equation}
and their generalizations, 
we will provide splittings
\begin{align}
    \sV&\cong \sV_L \oplus \sC\\
    \sV_c &\cong \sV_{L,c}\oplus \sC_{c},
\end{align}
where $\sC$ is the space of global sections of a bundle on $\bdyM$ and $\sC_c$ its corresponding space of compactly-supported sections.
We will derive these isomorphisms in the category $\CVS$, but the functor from $\CVS$ to $\DVS$ preserves limits, so we obtain the isomorphism also in the category $\DVS$.
Note that these splittings will allow us to characterize the inner hom spaces in Equations \eqref{eq: innerhom1} and \eqref{eq: innerhom2} as quotients of the inner hom spaces
\begin{equation}
    \innerhom{\sV}{\RR}
\end{equation}
and 
\begin{equation}
    \innerhom{\sV_c}{\RR}
\end{equation}
respectively, in both $\CVS$ and $\DVS$.
(Recall that the functor $\CVS\to \DVS$ does not preserve colimits in general and hence a computation of a colimit in $\CVS$ does not suffice in general to give the colimit in $\DVS$.
In the present case, however, the comptuation in $\CVS$ does suffice since we have provided a direct sum decomposition of $\sV$ and $\sV_c$.)

Let $C$ denote the bundle $(V\mid_{\bdyM})/L$ on $\bdyM$ and $\sC$ its space of global sections. 
There are natural surjective maps 
\begin{align}
P&:\sV\to \sC\\
P_c&:\sV_c\to \sC_c;
\end{align}
In the next proposition, we construct a map $I: \sC\to \sV$ which establishes $\sC$ as a complement to $\sV_L$ in $\sV$.
\begin{proposition}
\label{prop: dualsofcondfields}
There are (non-canonical) isomorphisms
\begin{align}
    \sV&\cong \sV_L\oplus \sC\\
    \sV_c&\cong \sV_{L,c}\oplus \sC_c
\end{align}
of convenient vector spaces.
\end{proposition}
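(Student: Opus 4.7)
The plan is to construct explicitly a right inverse $I \colon \sC \to \sV$ to the restriction map $P \colon \sV \to \sC$ in the category $\CVS$, thereby splitting the short exact sequence
\begin{equation}
0 \to \sV_L \to \sV \xrightarrow{P} \sC \to 0.
\end{equation}
Once $I$ is in hand, the maps
\begin{align*}
\sV &\to \sV_L \oplus \sC, \quad v \mapsto (v - I(P(v)),\, P(v)),\\
\sV_L \oplus \sC &\to \sV, \quad (f, c) \mapsto f + I(c)
\end{align*}
will be mutually inverse isomorphisms in $\CVS$ (and hence, by applying the limit-preserving functor $\CVS \to \DVS$, in $\DVS$ as well). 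The same $I$ will restrict to a right inverse $\sC_c \to \sV_c$ of $P_c$ provided it is constructed to be support-shrinking in a controlled way, giving the compactly-supported splitting by the same formulas.

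To build $I$, I would first choose an auxiliary bundle complement $L' \subset V|_\bdyM$ to $L$ (such a complement exists by picking a bundle metric on $V|_\bdyM$ and taking the orthogonal complement), so that $V|_\bdyM \cong L \oplus L'$ and the quotient map $V|_\bdyM \to C$ restricts to a bundle isomorphism $L' \cong C$. This identifies $\sC$ with $\Gamma(\bdyM, L')$. Next, fix a tubular neighborhood $\tubnhd \cong \bdyM \times [0,\epsilon)$ of $\bdyM$ in $M$ with projection $\pi \colon \tubnhd \to \bdyM$, and fix a bundle isomorphism $V|_\tubnhd \cong \pi^*(V|_\bdyM)$ extending the identity on $\bdyM$. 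Finally, pick a smooth cutoff function $\chi \colon M \to \RR$ which is identically $1$ on a neighborhood of $\bdyM$ in $\tubnhd$ and vanishes outside a set of the form $\bdyM \times [0, \epsilon')$ for some $\epsilon' < \epsilon$. Then define
\begin{equation}
I(s) := \chi \cdot \pi^* s \quad \text{for } s \in \Gamma(\bdyM, L') \cong \sC,
\end{equation}
where $\pi^* s$ is viewed as a section of $V|_\tubnhd$ via the inclusion $L' \hookrightarrow V|_\bdyM$, and extended by zero to all of $M$ using the support properties of $\chi$. The identity $P \circ I = \id_{\sC}$ is immediate: the restriction $I(s)|_\bdyM$ equals $s$, which lies in $L'$, so its class in $C = V|_\bdyM / L$ is $s$ itself.

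The only nontrivial point to verify is that $I$ is a morphism in $\CVS$, i.e. that it is bounded with respect to the natural bornologies on spaces of smooth sections. This follows from the fact that pullback along the smooth map $\pi$ and multiplication by the smooth compactly-(in the normal direction)-supported function $\chi$ are each bounded linear operations on spaces of smooth sections, together with the fact that extension by zero from $\tubnhd$ to $M$, applied to sections whose support avoids a neighborhood of $\partial \tubnhd \setminus \bdyM$, is bounded as well. I would cite the relevant facts about $\CVS$ from Appendix B of \autocite{CG1} and \autocite{krieglmichor}. For the compactly-supported variant, the support of $I(s)$ is contained in $\supp(\chi) \cap \pi^{-1}(\supp(s))$, which is compact whenever $\supp(s)$ is compact in $\bdyM$, so $I$ restricts to a bounded map $\sC_c \to \sV_c$ and the same splitting formulas apply verbatim.

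The main potential obstacle is not conceptual but bookkeeping: one must check that the splitting is genuinely a direct sum decomposition in $\CVS$, not merely an algebraic one. This reduces to verifying that both $I$ and the projection $v \mapsto v - I(P(v))$ are bounded, which is routine given the standard closure properties of $\CVS$ under the operations used; no deeper functional analysis is required beyond that already invoked in the body of the paper.
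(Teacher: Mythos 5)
Your proposal is correct and follows essentially the same route as the paper: both construct a bounded right inverse $I$ to $P$ by choosing a bundle splitting of $V|_{\bdyM}\to C$, a tubular neighborhood with a normal trivialization of $V$, and a cutoff $\chi$ supported near $\bdyM$, then split $\sV$ via $v\mapsto((1-IP)v,Pv)$. The only cosmetic difference is in the compactly supported case, where the paper runs the argument through a colimit over a compact exhaustion of $M$ while you observe directly that $I$ preserves compact supports; both yield the same conclusion.
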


\begin{proof}
Let us focus on the first isomorphism first.
We will construct a splitting $I$ of $P$ with the property that $\im(1-IP)\subset \sV_L$. 
To see that this suffices to prove the Proposition, first note that $\sV_L= \ker P$, and consider the map 
\begin{align}
T&:\sV \to \sV_{L}\oplus \sC\\
T(v)&= ((1-IP)v,Pv).
\end{align}
$T$ is a continuous linear isomorphism, by standard arguments. $T$ has an inverse $S$ given by $i\oplus I$, where $i$ is the inclusion of $\sV_L$ as a closed subspace of $\sV$. In other words, we will have 
\begin{equation}
\label{eq: isoquot}
\sV\cong \sV_L\oplus \sC.
\end{equation}

Let us now construct the splitting $\sC\to \sV$. To this end, choose a tubular neighborhood $\tubnhd\cong \bdyM\times[0,\epsilon)$ of $\bdyM$ in~$M$. 
Let $\pi$ be the projection
\[
\tubnhd\to [0,\epsilon).
\]
By the homotopy invariance of bundles, we may assume that
\[
V|_T\, \cong \pi^* V\mid_{\bdyM}.
\]
In other words, we may assume that $V$ is trivial in the normal direction.
Let $\chi$ be a compactly-supported function on $[0,\epsilon)$ which is 1 in a neighborhood of $0$ and with support contained in $[0,\epsilon/2]$. Let $c\in \sC$ be a section of $C$. 
Choose a splitting of bundles 
\begin{equation}
    \Psi: C\to (V|_{\bdyM}).
\end{equation}
Then, we set $I(c) = \chi \Psi (c)$.
(This is where we have used the isomorphism $V|_T \, \cong \pi^*V|_{\bdyM}$.)
It is straightforward to verify that $I$ is continuous and satisfies the equations $PI=id$ and~$\im(1-IP)\subset \sV_L$.

We now construct the second isomorphism of the lemma. 
Cover $M$ by a countable collection $
\cK_1\subset \cK_2\subset\cdots$ of compact subsets. 
We may assume, by replacing $\cK_i$ with $\cK_i\cup (\cK_i\cap \bdyM)\times [0,\epsilon/2]$ for all $i$, that $\cK_i$ contains $(\cK_i\cap \bdyM)\times [0,\epsilon/2]$. Then, the formulas for $I$ and $P$ send $\sC_{\cK_i\cap \bdyM}\to \sV_{\cK_i}$ and $\sV_{\cK_i}\to \sC_{\cK_i\cap \bdyM}$. We therefore obtain an isomorphism
\begin{equation}
\sV_{\cK_i}\cong \sV_{\cK_i,L}\oplus \sC_{\cK_i\cap \bdyM}
\end{equation}
for each $i$.
The isomorphism respects the maps induced from the inclusions $\cK_i\subset \cK_{i+1}$ and $\cK_i\cap \bdyM\subset \cK_{i+1}\cap \bdyM$, and so we have also an isomorphism 
\begin{equation}
\sV_c=\mathrm{colim}_i \sV_{\cK_i}\cong \mathrm{colim}_i(\sV_{L,\cK_i})\oplus\mathrm{colim}_i \sC_{\cK_i\cap \bdyM}= \sV_{\sL,c}\oplus \sC_c.
\end{equation}
This completes the proof.
\end{proof}

The most useful application of Proposition \ref{prop: dualsofcondfields} is the following corollary.
To make the notation more compact, we make the following definitions, valid for any convenient vector space $W$ and any $k\geq 0$:
\begin{align}
    W^{-k} &:= \innerhom{W^{\hotimes_\beta k}}{\RR}\\
    \Sym^{-k} W &:=\innerhomsym{W^{\hotimes_\beta k}}{\RR}{k};
\end{align}
here $S_k$ denotes the symmetric group on $k$ elements, and the subscript $(\cdot)_{S_k}$ denotes the space of \emph{coinvariants} for the $S_k$ action.

\begin{corA}
There are canonical isomorphisms
\begin{align}
&\Sym^{-k}(\sV_L) \cong \frac{\Sym^{-k}(\sV)}{\innerhomsym{\sC\hotimes_\beta(\sV)^{\hotimes_\beta (k-1)}}{\RR}{k}}\\
&\Sym^{-k}(\sV_{L,c}) \cong \frac{\Sym^{-k}(\sV_c)}{\innerhomsym{\sC_c\hotimes_\beta(\sV_c)^{\hotimes_\beta (k-1)}}{\RR}{k}}
\end{align}
of convenient vector spaces.
\end{corA}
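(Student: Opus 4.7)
The plan is to deduce the corollary directly from Proposition B.0.2 by applying the symmetric monoidal structure of $\CVS$ and then dualizing, making careful use of the fact that the splittings are genuine direct sum decompositions (so are preserved by the functor $\CVS \to \DVS$, even though it does not preserve arbitrary colimits).

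First I would apply Proposition B.0.2 to get a splitting $\sV \cong \sV_L \oplus \sC$ in $\CVS$, and then use that the bornological tensor product $\hotimes_\beta$ distributes over finite direct sums to expand
\[
\sV^{\hotimes_\beta k} \;\cong\; \bigoplus_{S\subseteq\{1,\ldots,k\}} \sV_{L,S^c}\hotimes_\beta \sC_S,
\]
where the summand indexed by $S$ has a $\sC$-factor in the positions labelled by $S$ and a $\sV_L$-factor elsewhere. The $S_k$-action by permutation of tensor factors permutes these summands in a way that depends only on $|S|$, so on passing to $S_k$-coinvariants one obtains $\Sym^k \sV \cong \bigoplus_{j=0}^{k} \Sym^{k-j}\sV_L \hotimes_\beta \Sym^j \sC$, with the $j=0$ summand equal to $\Sym^k\sV_L$.

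Next I would dualize. Since $\innerhom{\cdot}{\RR}$ turns finite direct sums of convenient vector spaces into finite direct products (which coincide with direct sums in this setting), this yields
\[
\Sym^{-k}\sV \;\cong\; \Sym^{-k}\sV_L \;\oplus\; \bigoplus_{j=1}^{k}\innerhom{\Sym^{k-j}\sV_L \hotimes_\beta \Sym^j\sC}{\RR}.
\]
The projection onto the $j=0$ summand is precisely the restriction map along $\sV_L \hookrightarrow \sV$, and the retraction $\pi:\sV \to \sV_L$ provided by the splitting induces a section of this restriction, so the kernel agrees with the complementary direct summand $\bigoplus_{j\geq 1}$. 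This complementary summand is by construction the subspace of symmetric functionals every term of whose expansion (via the splitting) carries at least one $\sC$-factor; with the paper's convention this is exactly the $S_k$-coinvariants of $\innerhom{\sC\hotimes_\beta\sV^{\hotimes_\beta(k-1)}}{\RR}$ viewed inside $\innerhom{\sV^{\hotimes_\beta k}}{\RR}$. Hence
\[
\Sym^{-k}(\sV_L) \;\cong\; \Sym^{-k}(\sV)\big/\innerhomsym{\sC\hotimes_\beta \sV^{\hotimes_\beta(k-1)}}{\RR}{k}.
\]
The compactly-supported case is formally identical, using the second isomorphism of Proposition B.0.2, together with the fact that the continuity and compact-support control on the section $I$ constructed there ensure that the splitting restricts well to $\sV_{L,c} \oplus \sC_c$.

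The main potential obstacle is ensuring that the stated quotient, computed a priori in $\DVS$, agrees with the $\CVS$-level computation we have carried out—this matters because the embedding $\CVS \hookrightarrow \DVS$ does not preserve colimits in general. However, because Proposition B.0.2 furnishes a genuine direct sum decomposition (not merely a surjection), the relevant quotient is a direct summand on both sides, and direct summands are preserved by the limit-preserving functor $\CVS \to \DVS$. The remaining verifications—that tensor products distribute over direct sums, that $\innerhom{-}{\RR}$ converts finite sums to finite products, and that the identification of the kernel with the coinvariant space respects symmetrization—are all formal consequences of the closed symmetric monoidal structure on $\CVS$ recalled in Appendix A, so no additional analysis is required.
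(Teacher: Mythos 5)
Your proposal is correct and follows the same route the paper intends: the paper offers no separate argument for Corollary A beyond deriving it from the direct-sum splittings of Proposition \ref{prop: dualsofcondfields}, and your expansion of $\sV^{\hotimes_\beta k}$ over subsets $S$, the dualization, and the identification of the complementary summand with the symmetrized image of $\innerhom{\sC\hotimes_\beta\sV^{\hotimes_\beta(k-1)}}{\RR}$ match exactly the paper's own description of the kernel as the $S_k$-orbits of functionals factoring through $P$ in one slot. Your observation that the direct-sum (rather than merely quotient) structure is what makes the identification valid in both $\CVS$ and $\DVS$ is precisely the point the paper emphasizes.
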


Note that Corollary A tells us the following about the relationship between $\Sym^{-k}(\sV)$ and $\Sym^{-k}(\sV_L)$.
First, there is a surjective map
\[
\Sym^{-k}(\sV)\to \Sym^{-k}(\sV_L).
\]
Second, the kernel of this map consists precisely of the $S_k$-orbits of maps of the form
\begin{equation}
    \Phi(v_1,\ldots, v_k) = \Phi'(Pv_1,v_2,\ldots, v_k), 
\end{equation}
where 
\[
\Phi' \in \innerhom{\sC\hotimes_\beta \sV^{\hotimes_\beta (k-1)}}{\RR}.
\]
Finally, $\Sym^{-k}(\sV_L)$ is identified with the quotient of the space $\Sym^{-k}(\sV)$ by this kernel, and this identification as a quotient is true both in $\CVS$ and $\DVS$.
This description of $\Sym^{-k}(\sV_L)$ will be useful to us in the body of the dissertation.


\addtocontents{toc}{\vspace{2em}}  

\label{Bibliography}
\printbibliography
\end{document}